\newcommand{\blind}{1}
\theoremstyle{plain}
\newtheorem{theorem}{Theorem}
\newtheorem{corollary}{Corollary}
\newtheorem{remark}{Remark}
\newcommand{\RNum}[1]{\uppercase\expandafter{\romannumeral #1\relax}}
\def\FDP{\mathrm{FDP}}
\def\FDR{\mathrm{FDR}}
\def\Var{\mathrm{Var}}
\def\Cov{\mathrm{Cov}}
\def\bX{\mathbf{X}}
\def\bY{\mathbf{Y}}
\def\bmu{\boldsymbol{\mu}}
\def\bSigma{\boldsymbol{\Sigma}}
\def\calH{\mathcal{H}}
\begin{document}

\def\spacingset#1{\renewcommand{\baselinestretch}%
{#1}\small\normalsize} \spacingset{1}


\if1\blind
{
  \title{\bf Asymptotic Uncertainty of False Discovery Proportion}
  \author{Meng Mei$^\text{a}$, Tao Yu$^\text{b}$, and Yuan Jiang$^\text{a}$\thanks{Yuan Jiang is the corresponding author. Meng Mei and Yuan Jiang's research is supported in part by National Institutes of Health grant R01 GM126549. Tao Yu's research is supported in part by the Singapore Ministry Education Academic Research Tier 1 Funds: R-155-000-202-114.}}
  \affil{$^\text{a}$ Department of Statistics, Oregon State University\\
         $^\text{b}$ Department of Statistics and Data Science, National University of Singapore}
  \date{}
  \maketitle
} \fi

\if0\blind
{
  \bigskip
  \bigskip
  \bigskip
  \begin{center}
    {\LARGE\bf Asymptotic Uncertainty of False Discovery Proportion}
\end{center}
  \medskip
} \fi

\begin{abstract}

Multiple testing has been a popular topic in statistical research. Although vast works have been done, controlling the false discoveries remains a challenging task when the corresponding test statistics are dependent. Various methods have been proposed to estimate the false discovery proportion (FDP) under arbitrary dependence among the test statistics. One of the main ideas is to reduce arbitrary dependence to weak dependence and then to establish theoretically the strong consistency of the FDP and false discovery rate (FDR) under weak dependence. As a consequence, FDPs share the same asymptotic limit in the framework of weak dependence. We observe that the asymptotic variance of the FDP, however, may rely heavily on the dependence structure of the corresponding test statistics even when they are only weakly dependent; and it is of great practical value to quantify this variability, as it can serve as an indicator of the quality of the FDP estimate from the given data. As far as we are aware, the research on this respect is still limited in the literature. In this paper, we first derive the asymptotic expansion of FDP under mild regularity conditions and then examine how the asymptotic variance of FDP varies under different dependence structures both theoretically and numerically. With the observations in this study, we recommend that in a multiple testing performed by an FDP procedure, we may report both the mean and the variance estimates of FDP to enrich the study outcome. 

\end{abstract}

\noindent%
{\it Keywords:} Asymptotic expansion; Asymptotic variance; Multiple testing; Weak dependence.
\vfill

\newpage
\spacingset{1.9} 

\section{Introduction}
\label{Sec-1}

Multiple hypothesis testing has been a popular topic in statistical research; it has wide applications in many scientific areas, such as biology, medicine, genetics, neuroscience, and finance. Consider a multiple testing problem where we need to test $p$ hypotheses simultaneously; denote by $p_0$ and $p_1$ the number of true null and false null hypotheses respectively. The testing outcomes can be summarized in Table \ref{Table-1}, 
\begin{table}[h]
\caption{Summary of possible outcomes from multiple hypothesis testing}
\begin{center}
\begin{tabular}{l|cc|c} \hline
 & Not rejected & Rejected & Total \\ \hline
 True Null & $U$ & $V$ & $p_0$ \\
 False Null & $T$ & $S$ & $p_1$ \\ \hline
 Total & $p - R$ & $R$ & $p$ \\ \hline
\end{tabular}
\end{center}
\label{Table-1}
\end{table}
where $V$ and $R$ denote the number of hypotheses that are wrongly rejected (i.e., the number of type I errors made) and rejected (i.e., the number of rejections made) out of the $p$ hypotheses, respectively.

Conventional methods propose to control the familywise error rate (FWER):  $\text{FWER} = P(V > 0)$ \citep{bonferroni1936teoria, vsidak1967rectangular, holm1979simple, simes1986improved, holland1987improved, hochberg1988sharper, rom1990sequentially}, or the generalized familywise error rate (gFWER): $\text{gFWER} = P(V > k), \text{ for } k \ge 1$ \citep{dudoit2004multiple, pollard2004choice, lehmann2012generalizations}. These methods control the probability of zero or a limited number of false positives, therefore they are usually conservative, especially when the number of hypotheses $p$ is large. We observe that large $p$ is common in practice. For example, in a genome-wide association study, hundreds of thousands or even millions of single nucleotide polymorphisms (SNPs) of each subject and the associated disease status are measured. It is of interest to find which SNPs are associated with the disease status. A common approach is to conduct a hypothesis test for each SNP; this results in a huge number of hypothesis tests. 

When $p$ is large, FDP and its statistical characteristics have been introduced. False discovery rate (FDR) has been the most popularly studied; it was introduced by \citet{benjamini1995controlling} and was proven to be a reasonable criterion; in particular,
\begin{equation*}
\FDP = \frac{V}{R}, \qquad \FDR = E(\FDP), 
\end{equation*}
where $\FDP = \FDR = 0$ if $R = 0$. A list of FDR research can be found in \citet{benjamini1995controlling}, \citet{benjamini2001control}, \citet{storey2002direct}, \citet{storey2003statistical}, \citet{storey2004strong}, \citet{ferreira2006benjamini}, \citet{clarke2009robustness}, and the references therein. We observe that most of these methods are founded on the assumption that the test statistics are independent or satisfy some restrictive dependence structure. 

Because of the complicated nature of the practical data, the resultant test statistics may follow any arbitrary dependence structure. The effects of dependence on FDR have been studied extensively, e.g., by \citet{benjamini2001control}, \citet{finner2002multiple}, \citet{owen2005variance}, \citet{sarkar2006false}, \citet{efron2007correlation}, among others. Recently, new methods for the control and/or estimation of FDR have been proposed to allow complex and general dependence structure or even incorporate it in the testing procedure. Examples include the local index of significance (LIS) testing procedure for dependence modeled by a hidden Markov model \citep{sun2009large}, the pointwise and clusterwise analysis for spatial dependence \citep{sun2015false}, and the principal factor approximation (PFA) method for arbitrary dependence \citep{fan2012estimating, fan2017estimation, fan2019farmtest}. In particular, \citet{fan2012estimating} studied FDP and FDR under arbitrary dependence with a two step procedure. They first proposed a PFA method to reduce arbitrary dependence to weak dependence; and then established theoretically the strong consistency of the FDP and FDR. Therefore, their method is valid for test statistics under arbitrary dependence; under weak dependence, both FDP and FDR converge to the same asymptotic limit.

In addition to FDR, another popular multiple testing criterion is called false discovery exceedance (FDX, also referred to as TPPFP in the literature), which is defined as the tail probability of the event that FDP exceeds a threshold $q \in (0, 1)$:
\begin{equation*}
\text{FDX} = P(\FDP > q).
\end{equation*}
Compared to FDR that is only a single-value summary, FDX has the potential to fully describe the uncertainty and/or distribution of FDP as the threshold value $q$ varies. However, it has not been fully characterized how the dependence among the test statistics would impact FDX as opposed to FDR in the literature. In particular, some works on FDX assume independence among tests \citep{genovese2004stochastic, guo2007generalized, ge2012control}; more importantly, most works focus on developing procedures that control FDX as a single-value characteristic of FDP other than investigating how dependence affects the uncertainty and/or distribution of FDP \citep{korn2004controlling, van2004augmentation, lehmann2005generalizations, genovese2006exceedance, delattre2015new, hemerik2019permutation, dohler2020controlling, basu2021empirical}. Thus, these works still lack the valuable information about how FDP varies from study to study.

As far as we are aware, most of the above works have focused on proposing the estimates of FDP and/or its related statistical characteristics
. On the one hand, studying these estimates is sufficient to ensure that the corresponding FDP and/or its statistical characteristics are under control. On the other hand, we observe that the asymptotic uncertainty of the FDP may rely heavily on the dependence structure of the corresponding test statistics; and it is of great practical value to quantify this variability, as it can serve as an indicator of the quality of the FDP estimate and benefit the research of the FDR and FDX methods. More specifically, with the FDP variance estimate available, one can conclude more confidently how well the FDP is controlled when controlling its corresponding mean at a given level. It may also benefit the development on the methods for estimating and controlling FDX. The research on the uncertainty of FDP has high potential impact, but it is still limited in the community. For example, \citet{ge2012control} and \citet{delattre2011false} derived the asymptotic distribution of FDP under independence and Gaussian equi-correlation dependence, respectively; \citet{delattre2016empirical} established the asymptotic distribution of $\FDP$ under some assumptions on the dependence structure of the test statistics and that the expected values of the test statistics in the alternatives are equivalent. 

In this paper, our focus is to provide a rigorous theoretical investigation of the asymptotic uncertainty of FDP in the framework that the test statistics are weakly dependent. In particular, we derive the theoretical expansion of the FDP: it is a linear combination of the tests, based on which we obtain its asymptotic variance. We also examine how this variance varies under different dependence structures among the test statistics both theoretically and numerically. One interesting observation is that the FDP based on weakly dependent test statistics converges to the same limit as that based on independent test statistics \citep{fan2012estimating}; however, the asymptotic variance of the former is almost always significantly greater than the latter both theoretically and numerically. With these observations, we recommend that in a multiple testing performed by an FDP procedure, we may report both the mean and the variance estimates of FDP to enrich the study outcome. Finally, with a real GWAS dataset, we demonstrate how our findings may impact real studies.

\section{Asymptotic Limit of FDP Under Weak Dependence}
\label{Sec-2}

Consider a genome-wide association study with $n$ subjects; for each subject, its disease status and $p$ SNPs are measured. The data can be represented by $\bY = (Y_1, \ldots, Y_n)^T$ and a $n\times p$ matrix $\bX = (X_{ij})_{i=1,\ldots,n; j=1,\ldots,p}$, where $Y_i$ denotes the disease status of subject $i$, and the $i$th row of $\bX$ collects its associated SNPs. When $Y_i$'s are measured in a continuous scale, the association between the $j$th SNP and the disease status can be modelled by the marginal linear regression model:
\begin{equation}
Y_{i} = \alpha_j + \beta_jX_{ij} + \epsilon_{ij}, \quad i=1,\ldots,n, \label{linear.model}
\end{equation}
where $\alpha_j$ and $\beta_j$ are regression parameters and $\epsilon_{ij}$ are random errors. The estimates $\hat \beta_j,\ j=1,\ldots,p$ and the corresponding test statistics $Z_j,\ j=1, \ldots, p$ for
\begin{equation} \label{hypotheses.1}
H_{0j}: \beta_{j}=0 \quad \text{versus} \quad H_{1j}:\beta_{j} \neq 0,\quad j=1,\dots,p,
\end{equation}
can be derived with a standard least squares procedure.  Proposition 1 in \citet{fan2012estimating} verified that $\hat \beta_j,\ j=1,\ldots,p$ are correlated and derived their joint distribution, under appropriate regularity conditions. As a consequence, the test statistics $Z_j,\ j=1,\ldots,p$ are also correlated. In particular, they derived that conditioning on $\bX$,
\begin{equation}\label{test.distribution}
(Z_{1},\ldots,Z_{p})^{T} \sim N((\mu_{1},\ldots,\mu_{p})^{T},\bSigma),
\end{equation}
where $(\mu_1,\ldots, \mu_p)^T$ and $\bSigma = (\sigma_{ij})_{i,j=1,\ldots,p}$ are appropriate population mean and variance matrix for $(Z_1,\ldots,Z_p)^T$. Furthermore, the testing problem \eqref{hypotheses.1} becomes equivalent to 
\begin{equation} \label{hypotheses.2}
H_{0j}: \mu_{j}=0 \quad \text{versus} \quad H_{1j}: \mu_{j} \neq 0,\quad j=1,\ldots,p.
\end{equation}

Therefore, we shall work on the multiple testing problem under the setup (\ref{test.distribution}) and (\ref{hypotheses.2}) hereafter. Let $\calH_0 = \{j: H_{0j} \text{ is true}\}$ and $\calH_1 = \{j: H_{0j} \text{ is false}\}$ respectively be the sets of indices for the true nulls and the true alternatives, and let $p_0$ and $p_1$ denote their cardinalities. For a given threshold $t\in (0,1)$, the $j$th test is given by $t_j = \mathrm{1}(|Z_j| > |z_{t/2}|)$, where $t_j = 1$ indicates that the null hypothesis is rejected, and $t_j = 0$ otherwise; here $z_{t/2}$ denotes the $(t/2)$th quantile of a standard normal distribution. Define $V(t)=\sum_{j \in \mathcal{H}_0} t_j$, $S(t)=\sum_{j \in \mathcal{H}_1} t_j$, and $R(t)=\sum_{j =1}^p t_j$; they are respectively the number of false discoveries, correct discoveries, and the total number of discoveries. As a consequence, we have $\mathrm{FDP}(t) = V(t) / R(t)$.

\citet{fan2012estimating} studied the estimation of $\mathrm{FDP}(t)$ under an arbitrary dependence structure of $\bSigma$. One milestone finding is that if $\bSigma$ follows the weak dependence structure: 
\begin{equation} \label{weak.dependence}
\sum_{i,j}|\sigma_{ij}| = O\left(p^{2-\delta}\right) \text{ for some } \delta>0,
\end{equation} 
then,
\begin{equation} \label{asymptotic.limit}
\lim_{p\to\infty} \left[\mathrm{FDP}(t) -\frac{p_0 t}{\sum_{i=1}^{p}\left\{\Phi(z_{t/2}+\mu_{i}) + \Phi(z_{t/2}-\mu_{i})\right\}}\right] = 0,
\end{equation}
almost surely, where $\Phi(\cdot)$ denotes the cumulative distribution function of a standard normal distribution. This indicates that under the aforementioned weak dependence assumption, $\FDP(t)$ converges to a limit not depending on the covariance structure of the test statistics. It is noteworthy that this limit holds not only for $\FDP(t)$, but also for $\FDR(t)$. 

We observe that this conclusion does not hold for the asymptotic variance of $\FDP(t)$; the asymptotic variance of $\FDP(t)$ varies under different covariance structures among the test statistics even when they are weakly dependent. In this article, we shall derive the asymptotic variance of $\FDP(t)$ theoretically and study how it varies under different dependence structures of the test statistics numerically. 

\section{Asymptotic Variance of FDP}
\label{Sec-3}

In this section, we study the asymptotic variance of $\FDP(t)$ in the framework of Section \ref{Sec-2}. We first establish the asymptotic expansion of $\FDP(t)$, and its asymptotic variance under a weak dependence structure. Then, we show that with mild conditions, this asymptotic variance under dependence is higher than that under independence. This implies that under weak dependence, although the asymptotic mean of $\FDP(t)$ remains the same, its asymptotic variance can be inflated. For space limitations and presentational continuity, we have relegated the technical details in the supplementary materials. 

\subsection{Asymptotic expansion and variance of FDP}
\label{Sec-3.1}

We first derive the asymptotic expansion of the FDP in the framework of weak dependence. We introduce the following notation. For any $t\in[0,1]$, we denote $\xi_j = \Phi(z_{t/2} + \mu_j) + \Phi(z_{t/2} -\mu_j)$ and $\bar\xi = \frac{1}{p_1}\sum_{j\in \mathcal{H}_1} \xi_j$. Consider the function 
\begin{equation*}
H(\mu) = \phi(|z_{t/2}| + \mu)(|z_{t/2}| + \mu) + \phi(|z_{t/2}| - \mu)(|z_{t/2}| - \mu); 
\end{equation*}
there exists a unique root, denoted by $\mu_t$, of this function for $\mu\in (|z_{t/2}|, |z_{t/2}|+1)$. Let $C_t^{\max} = \sup_{\mu \in (-\mu_t, \mu_t)} H(\mu)$. 
  
\begin{theorem} \label{Thm-1}
Suppose $(Z_{1},\ldots,Z_{p})^{T} \sim N((\mu_{1},\ldots,\mu_{p})^{T},\bSigma)$ with unit variances, i.e., $\sigma_{jj}=1\text{ for } j=1,\ldots,p$. Assume that $(Z_{1},\dots,Z_{p})^T$ are weakly dependent as defined in (\ref{weak.dependence}), that $\limsup_{p\to \infty} p_0t/(p_1\bar \xi) < 1$, and that as $p$ is sufficiently large, for a universal constant $C>0$,
\begin{align}
&\sum_{i\neq j; i,j\in \mathcal{H}_0} \sigma_{ij}^2 \geq \frac{C_t^{\max}}{\phi(z_{t/2})|z_{t/2}|} \sum_{i\in \mathcal{H}_1, j\in \mathcal{H}_0, \mu_i\in[-\mu_t, \mu_t]} \sigma_{ij}^2, \label{Cond-1} \\
&\sum_{i\neq j; i,j\in \mathcal{H}_0} \sigma_{ij}^2 + p \geq C \sum_{i\neq j; i, j\in \mathcal{H}_1} \sigma_{ij}^2, \label{Cond-2}\\
&\sum_{i\neq j} \sigma_{ij}^4 = o\left(\sum_{i\neq j; i,j\in \mathcal{H}_0} \sigma_{ij}^2  + p_0\right). \label{Cond-3}
\end{align}
We have the following asymptotic expansion of $\FDP(t)$:
\begin{equation} \label{asymptotic.expansion}
\FDP(t) = \frac{E(\bar V)}{E(\bar R)} + m(\bar{V},\bar{R}) +r(\bar V, \bar R), 
\end{equation}
where $\bar V = V(t)/p$, $\bar R = R(t)/p$,
\begin{equation*}
m(\bar{V},\bar{R}) =  \frac{\bar{V}}{E(\bar{R})} - \frac{E(\bar{V})}{\{E(\bar{R})\}^2} \bar{R},
\end{equation*}
and the remainder term $r(\bar V, \bar R)$ satisfies that $E\{r^2(\bar V, \bar R)\} = o[\Var\{m(\bar{V},\bar{R})\}]$.
\end{theorem}

\begin{remark}
Theorem \ref{Thm-1} establishes the asymptotic expansion of $\FDP(t)$. It decomposes $\FDP(t)$ into three parts: the asymptotic mean $E(\bar{V})/E(\bar{R})$, the stochastic term $m(\bar{V},\bar{R})$ that has mean zero, and an asymptotically negligible remainder term $r(\bar V, \bar R)$. The asymptotic mean $E(\bar{V})/E(\bar{R})$ is a constant, which reinforces the conclusion given by \eqref{asymptotic.limit}; the stochastic term $m(\bar{V},\bar{R})$ is a linear combination of the tests $t_j,\ j=1\ldots, p$, which gives the asymptotic variance of $\FDP(t)$.
\end{remark}

\begin{remark}
With the weak dependence assumption (\ref{weak.dependence}),  other technical conditions in Theorem \ref{Thm-1}, i.e., Conditions \eqref{Cond-1}--\eqref{Cond-3}, are mild, if $\bSigma$ does not behave too extremely; the intuition is as follows. The number of summed terms on the left hand side of \eqref{Cond-1} is about $p_0^2$; in contrast, that on the right side is about $p_0p_1$. If $p_0 \gg p_1$, the number of summed terms on the left hand side of \eqref{Cond-1} is much more than that on the right. Therefore, this condition is satisfied unless the magnitudes of $\sigma_{ij}$ for $i\in \mathcal{H}_0,\  j\in \mathcal{H}_1$ are generally much bigger than those for $i,j\in \mathcal{H}_0$. Similar arguments are applicable for Condition \eqref{Cond-2}. For Condition \eqref{Cond-3}, the summation on the left hand side is over the fourth order of $\sigma_{ij}$, whereas that on the right is over the second order; therefore, it is easily satisfied when most of $\sigma_{ij}$ approach $0$ when $p\to \infty$.
\end{remark}

\begin{remark}
The development for Theorem \ref{Thm-1} is theoretically challenging. The expansion (\ref{asymptotic.expansion}) can be observed from a standard Taylor expansion; however, the derivation for $E\{r^2(\bar V, \bar R)\} = o[\Var\{m(\bar{V},\bar{R})\}]$ is technically involved. More specifically, two features jointly introduce challenges in our developments: (1) the leading component of $E\{r^2(\bar V, \bar R)\}$ is the summation of fourth moments of all the tests; we observe that to compare its asymptotic properties with $\Var\{m(\bar{V},\bar{R})\}$, a fourth order Taylor expansion of each moment term is further needed; (2) the dependence structure of the tests significantly complicates our study of the asymptotic upper bound of $E\{r^2(\bar V, \bar R)\}$ and the lower bound of $\Var\{m(\bar{V},\bar{R})\}$. 
\end{remark}

Following the above asymptotic expansion of $\FDP(t)$ in Theorem \ref{Thm-1}, Corollary \ref{Corrollary-asym-var} below presents its asymptotic variance.
\begin{corollary} \label{Corrollary-asym-var}
With all the conditions in Theorem \ref{Thm-1} effective, we have
\begin{equation} \label{asymptotic.variance}
\lim_{p\to\infty}\dfrac{\Var\left\{\FDP(t)\right\}}{V_1(t) + V_2(t)} = 1, 
\end{equation}
where 
\begin{align}
V_1(t) ={} & \dfrac{p_1^2 \bar{\xi}^2}{(p_0 t + p_1\bar{\xi})^4} \times p_0t(1-t) + \dfrac{p_0^2 t^2}{(p_0 t + p_1\bar{\xi})^4} \times p_1\bar{\xi}(1 - \bar{\xi}), \label{V.1}\\
V_2(t) ={} & \dfrac{2p_1^2 \bar{\xi}^2}{(p_0 t + p_1\bar{\xi})^4} \sum_{\substack{i<j \\ i,j\in\calH_0}} \Cov(t_i,t_j)  - \dfrac{2p_0p_1t\bar{\xi}}{(p_0 t + p_1\bar{\xi})^4} \sum_{\substack{i \in \calH_0 \\ j \in \calH_1}} \Cov(t_i, t_j) \notag \\
&+ \dfrac{2p_0^2 t^2}{(p_0 t + p_1\bar{\xi})^4} \sum_{\substack{i<j \\ i,j\in\calH_1}} \Cov(t_i,t_j), \label{V.2}
\end{align}
with
\begin{align}
\Cov(t_i,t_j) 
= {} & \int_{|z_i| > |z_{t/2}|} \int_{|z_j| > |z_{t/2}|} \phi(z_i, z_j; \mu_i,\mu_j, 1, 1, \sigma_{ij}) dz_i dz_j \notag \\ 
& - [\Phi(z_{t/2}+\mu_i) + \Phi(z_{t/2}-\mu_i)][\Phi(z_{t/2}+\mu_j) + \Phi(z_{t/2}-\mu_j)], \label{covariance.t}
\end{align}
in which $\phi(z_i, z_j; \mu_i,\mu_j, \sigma_i^2, \sigma_j^2, \sigma_{ij})$ is the probability density function of the bivariate normal distribution with mean $(\mu_i,\mu_j)^T$ and covariance matrix $\left(\begin{matrix} \sigma_i^2 & \sigma_{ij}\\ \sigma_{ij} & \sigma_j^2 \end{matrix}\right)$.
\end{corollary}

Corollary \ref{Corrollary-asym-var} provides an explicit formula for the asymptotic variance of FDP under weak dependence. In the special case that the test statistics are independent, $V_2(t) = 0$ and the asymptotic variance of FDP is reduced to $V_1(t)$. Therefore, the ``additional variance" of FDP due to the dependence among the test statistics is introduced by $V_2(t)$. However, the sign of $V_2(t)$ is not conclusive based on \eqref{V.2}. This leads to an interesting question: under which scenario, $V_2(t)>0$, and therefore the $\FDP$ is inflated by the variance among test statistics? To answer this question, we need to investigate the signs of the covariances between pairs of tests, i.e., $\Cov(t_i,t_j)$, which are included in the formula of $V_2(t)$. We answer this question in the subsequent subsections.

\subsection{Signs of the covariance between any pair of tests}
\label{Sec-3.2}

From our development in the subsection above, we explained that the asymptotic variance of $\FDP(t)$ depends on $V_2(t)$; and in turn depends on $\Cov(t_i,t_j)$. In this subsection, we study how the signs of $\Cov(t_i,t_j)$ are affected by the dependence among the test statistics. Interestingly, we observe that the signs of $\Cov(t_i,t_j)$ are almost certainly determined by the memberships of tests $i$ and $j$ in $\mathcal{H}_0$ and $\mathcal{H}_1$. We have the following theorem.

\begin{theorem} \label{Thm-2}
Suppose $(Z_{1},\ldots,Z_{p})^{T} \sim N((\mu_{1},\ldots,\mu_{p})^{T},\bSigma)$ with unit variances, i.e., $\sigma_{jj}=1\text{ for } j=1,\ldots,p$. Assume  $\sigma_{ij} \ne 0$ for the index pair $(i, j), i \ne j$. Then we have:
\begin{itemize}[noitemsep]
\item[$(a)$] for $i ,j\in\calH_0$, $\Cov(t_i, t_j) > 0$;
\item[$(b)$] for $i \in\calH_0$, $j \in\calH_1$, $\Cov(t_i, t_j) < 0$ if $|\mu_j| > 2|z_{t/2}|$ and $t < 2\{1-\Phi^{-1}(\sqrt{\log(3)/2})\}$;
\item[$(c)$] for $i ,j\in\calH_1$, $\Cov(t_i, t_j) > 0$ if $|\sigma_{ij}| \le |\mu|_{\min}/(|\mu|_{\max} + z_{t/2})$ and $\mathrm{sign}(\sigma_{ij}) = \mathrm{sign}(\mu_i \mu_j)$, where $|\mu|_{\min} = \min(|\mu_i|, |\mu_j|)$ and $|\mu|_{\max} = \max(|\mu_i|, |\mu_j|)$.
\end{itemize}
\end{theorem}

The motivation of the above theorem is to understand how the variance of FDP is affected by the dependence structure among the test statistics; however, these results on their own are interesting, and have theoretical and/or practical value. The theorem implies that the sign of the covariance  of any $(t_i, t_j), i\neq j$ pair is almost certainly determined by the membership of tests $i$ and $j$ in $\mathcal{H}_0$ and $\mathcal{H}_1$, and relies little on the sign of $\sigma_{ij}$; it holds for the dependence types of both weakly dependent and strongly dependent test statistics.  We expect that this theorem may benefit other multiple testing related research, when it is in need to quantify the dependence of $t_i$ and $t_j$. Next, we give some discussion on the implications of Part (a)--(c). In the discussion below, we assume that $Z_i$ and $Z_j$ are dependent, if not stated otherwise. 

Part (a) states that $t_i$ and $t_j$ are positively correlated provided that both corresponding tests belong to $\mathcal{H}_0$. The intuition is that both tests are two-sided tests and the corresponding test statistics have zero expectations. Therefore, $|Z_i|$ and $|Z_j|$ are positively correlated, which further leads to the positive correlation between $t_i$ and $t_j$. 

Part (b) indicates that when one test, say $i$, belongs to $\mathcal{H}_0$, and the other test, say $j$, belongs to $\mathcal{H}_1$, then $t_i$ and $t_j$ are negatively correlated given that $t$ is a reasonable threshold, i.e., $t < 2\{1-\Phi^{-1}(\sqrt{\log(3)/2})\}\approx 0.46$ and that the signal of $j$ is not too weak, i.e., $|\mu_j| > 2|z_{t/2}|$. Note that both conditions are typically satisfied in practical problems. The intuition of this conclusion is as follows. Let $Z_j^* = Z_j - \mu_j$; then if we consider $Z_j^*$ as the test statistic, its corresponding test belongs to $\mathcal{H}_0$; its correlations with the other test statistics remain unchanged. The corresponding test $t_j$ in the notation of $Z_j^*$ has the form: 
\begin{eqnarray*}
t_j = \mathrm{1}(|Z_j| > |z_{t/2}|) = \mathrm{1}\{Z^*_j \in (-\infty, -|z_{t/2}| - \mu_j) \cup (|z_{t/2}| - \mu_j, \infty)\},
\end{eqnarray*}
where we assume that $\mu_j>0$ for presentational convenience; 
this leads to $$1-t_j = \mathrm{1}\left\{Z^*_j \in (-|z_{t/2}| - \mu_j, |z_{t/2}| - \mu_j)\right\},$$ and implies that the acceptance region for the test based on $Z_j^*$ is contained by the rejection region of $t_i$ given that the signal $\mu_j > 2|z_{t/2}|$. From our conclusion in Part (a), $1-t_j$ is positively correlated with $t_i$, and therefore $t_j$ and $t_i$ are negatively correlated. 

Part (c) gives sufficient conditions under which $t_i$ and $t_j$ are positively correlated when $i,j\in \mathcal{H}_1$. This is the only case where $\sigma_{ij}$ affects the sign of the correlation between the corresponding tests. These conditions are also intuitively met in real applications. In particular, the first condition $|\sigma_{ij}| \le |\mu|_{\min}/(|\mu|_{\max} + z_{t/2})$ essentially requires that the magnitude of $\sigma_{ij}$ does not exceed a ratio of the minimum to the maximum signals. One special practical scenario is that if the two signals are reasonably strong and in the similar scale, this ratio is close to 1; thus this condition is met. The second condition $\mathrm{sign}(\sigma_{ij}) = \mathrm{sign}(\mu_i \mu_j)$ needs that the sign of $\sigma_{ij}$ coincides with the signs of the corresponding signals: the test statistics are positively correlated if the signals are of the same signs; and they are negatively correlated otherwise. More specifically, consider the GWAS example, Part (c) implies that the tests for the significance of the $i$th and $j$th SNPs are positively correlated if their individual effects on the disease status are both positive or negative.


\subsection{Inflation of the FDP variance under dependence}
\label{Sec-3.3}

In Section \ref{Sec-3.1}, we discussed that $V_2(t)$ attributes to the FDP variance difference among varied dependence structures of the test statistics. The signs of its components are further discussed in Section \ref{Sec-3.2}. From these discussion, we conclude that the variance of FDP is inflated given that Parts (a)--(c) in Theorem \ref{Thm-2} all hold. We summarize this result in the following theorem.

\begin{theorem} \label{Thm-3}
Suppose $(Z_{1},\ldots,Z_{p})^{T} \sim N((\mu_{1},\ldots,\mu_{p})^{T},\bSigma)$ with $\sigma_{jj}=1\text{ for } j=1,\ldots,p$. For any $i,j\in \mathcal{H}_1, i\neq j$, assume that $|\mu_i| \ge 2|z_{t/2}| > 2\sqrt{\log(3)/2}$,  $|\sigma_{ij}| \le |\mu|_{\min}/(|\mu|_{\max} + z_{t/2})$, and $\mathrm{sign}(\sigma_{ij}) = \mathrm{sign}(\mu_i \mu_j)$, where $|\mu|_{\min} = \min(|\mu_i|, |\mu_j|)$ and $|\mu|_{\max} = \max(|\mu_i|, |\mu_j|)$. Then, $V_2(t)>0$.
\end{theorem}

Theorem \ref{Thm-3} implies that the variance of FDP is inflated if the test statistics satisfy the given assumptions on their dependence structure. On the one hand, as we have discussed in Section \ref{Sec-3.2}, these are mild conditions for practical data. On the other hand, they are only one set of sufficient conditions that lead to the inflation of the FDP variance. 
In our numerical studies, we experiment with some settings that violate the conditions in Theorem \ref{Thm-3}, and still observe the inflation of the variance of FDP; please refer to Section \ref{Sec-6} for details. Such scenarios are not theoretically justified, but may often be practically observed.

\section{Estimation of the Asymptotic Variance of FDP}
\label{Sec-5}

In Section \ref{Sec-3.1}, we have established the asymptotic variance of FDP in the framework of weak dependence. We observe that this variance depends on the parameters $(\mu_1, \ldots, \mu_p)^T$ and $\bSigma$. We need to estimate them so that we can quantify this variance to make it practically applicable. Throughout, we assume that $\bSigma$ is known or can be estimated from other resources (see Section \ref{Sec-2}) and estimate only $(\mu_1,\ldots, \mu_p)^T$. 

One direct approach is to estimate $(\mu_1, \ldots, \mu_p)^T$ by their corresponding test statistics $(Z_1, \ldots, Z_p)^T$. However, in practical data, we often have $p_0 \gg p_1$; this implies that most $\mu_j$'s are zeros. Estimating these ``zeros" using the corresponding observed $Z_j$'s tends to diminish the efficiency of the method, as they can be viewed as noise. In our numerical studies, we consider the following estimation approach.

\begin{itemize}
\item[(1)] Find an estimate $\hat \pi_0$ for $\pi_0 = p_0/p$, which is the proportion of the number of tests that belong to $\mathcal{H}_0$ over the total number of tests. 
    
Estimation methods for $\pi_0$ are available in the literature. For example, \citet{storey2003statistical} proposed $\hat\pi_0(\lambda) = \sum_{i=1}^p 1(P_i > \lambda)/\{m (1 - \lambda)\}$ with an $\lambda\in (0,1)$; the basic idea of this estimator is to pretend that $p$-values greater than $\lambda$ are from $\mathcal{H}_0$. In this framework, smoothed and bootstrap estimators of $\pi_0$ are proposed by \citet{storey2003statistical} and \citet{storey2004strong}, respectively. Additionally, \citet{langaas2005estimating} proposed a nonparametric maximum likelihood estimator of $\pi_0$. More recently, \citet{wang2010slim} proposed a ``SLIM" estimator of $\pi_0$; their estimator accounted for the dependence among the test statistics. We shall adopt these estimators in our numerical studies and compare their performance in the estimation of the FDP variance. 
    
\item[(2)] Estimate $p_1$ by $\hat p_1 = p(1-\hat \pi_0)$. We then collect the hypotheses with the top $\hat p_1$ statistic values to form $\widehat{\mathcal{H}}_1$, and the rest are assigned to $\widehat{\mathcal{H}}_0$. 

\item[(3)] Set $\hat \mu_j = 0$ for $j\in \widehat{\mathcal{H}}_0$; and set $\hat \mu_j = Z_j$ for $j\in \widehat{\mathcal{H}}_1$; and use them to estimate the variance of $\FDP(t)$ based on Corollary \ref{Corrollary-asym-var}.  
\end{itemize}

\section{Simulation Studies}
\label{Sec-6}

We conduct simulation studies to validate our developments in Sections \ref{Sec-3} and \ref{Sec-5}. In Section \ref{Sec-6.1}, we use various numerical examples to illustrate how the asymptotic variances of FDP are different for varied dependence structures among the test statistics. In Section \ref{Sec-6.3}, we illustrate how the choice of the methods for estimating $\hat \pi_0$ reviewed in Section \ref{Sec-5} affects the estimation for the variance of FDP. 

\subsection{Variance of FDP under varied dependence structures}
\label{Sec-6.1}

Throughout, we generate $(Z_1,\ldots,Z_p)^T \sim N((\mu_1,\ldots,\mu_p)^T, \bSigma)$ with $p = 2000$. We examined three choices of $p_1$: 50, 100, and 200, where the indices of tests in $\mathcal{H}_1$ are randomly sampled from $\{1,\ldots,p\}$; for $j\in \calH_1$, we set $\mu_j = 2z_{t/2}$. We considered three possible values of $t$:  0.005, 0.02, and 0.05. The choice of $\bSigma$ determines the dependence structure among the test statistics. We use similar methods as \citet{fan2012estimating} to generate it; the details are given below. 
\begin{itemize}
\item[(1)] We generate a random sample $\bX_1, \ldots, \bX_{400}$, which are iid copies of $\bX = (X_1, \ldots, X_p)^T$. We consider seven models for the joint distribution of $\bX$.

\begin{itemize}
\item \textbf{[M1]} $X_1,\ldots, X_p$ are generated as iid $N(0,1)$ random variables. 
\item \textbf{[M2]} Let $\bX \sim N_{p}(0, \mathbf\Lambda)$, where $\mathbf\Lambda$ has diagonal elements $1$ and off-diagonal elements $1/2$.
\item \textbf{[M3]} Let $\{X_{k}\}_{k=1}^{1900}$ be iid $N(0,1)$ random variables, and $X_{k} = \sum_{l=1}^{10}X_{l}(-1)^{l+1}/5 + \sqrt{1-\dfrac{10}{25}}\varepsilon_{k}$ for $k=1901,\dots,2000,$
with $\{\varepsilon_{k}\}_{k=1901}^{2000}$ being $N(0,1)$ random variables.
\item \textbf{[M4]} Let $\{X_{k}\}_{k=1}^{2000}$ be iid Cauchy random variables with location parameter $0$ and scale parameter $1$.
\item \textbf{[M5]} Let $X_{j} = \rho_{j}^{(1)}W^{(1)}+\rho_{j}^{(2)}W^{(2)}+\rho_{j}^{(3)}W^{(3)}+H_{j}$, where $W^{(1)} \sim N(-2,1)$, $W^{(2)} \sim N(1,1)$, $W^{(3)} \sim N(4,1)$, $\rho_{j}^{(1)}$, $\rho_{j}^{(2)}$, $\rho_{j}^{(3)} \sim U(-1,1)$, $H_{j}\sim N(0,1)$; and all these random variables are independent.
\item \textbf{[M6]} Let $X_{j} = \rho_{j}^{(1)}W^{(1)}+\rho_{j}^{(2)}W^{(2)}+H_{j}$, where $W^{(1)}, W^{(2)}\sim N(0,1)$, $\rho_{j}^{(1)}, \rho_{j}^{(2)} \sim U(-1,1)$, $H_{j} \sim N(0,1)$, and all these random variables are independent.
\item \textbf{[M7]} Let $X_{j} = \mathrm{sin}\left(\rho_{j}^{(1)}W^{(1)}\right)+\mathrm{sign}\left(\rho_{j}^{(2)}\right)\exp\left(|\rho_{j}^{(2)}|W^{(2)}\right)+H_{j}$, where $W^{(1)}, W^{(2)}\sim N(0,1)$, $\rho_{j}^{(1)}, \rho_{j}^{(2)}\sim U(-1,1)$, and $H_{j}\sim N(0,1)$, and all these random variables are independent.
    \end{itemize}
    
\item[(2)]  We compute $\bSigma^{\text{initial}}$ as the sample correlation matrix based on $\bX_1, \ldots, \bX_{400}$. Note that for some scenarios above, the components of $\bX$ are strongly dependent
. 

\item[(3)]  We adopt the PFA method in \citet{fan2012estimating} to generate a weakly dependent $\bSigma$ based on $\bSigma^{\text{initial}}$. When applying the PFA method, we used the criterion $p^{-2}\sum_{1\le i,j \le p}|\sigma_{ij}| < 0.05$ numerically. 

\end{itemize}

With these setups, for \textbf{M1}, the test statistics are independent, whereas for the other models, the test statistics are weakly dependent with varied dependence structures. For each aforementioned $p_1$, $t$, and model combination, we conduct $\FDP$ analysis, and evaluate the corresponding sample standard deviation of $\FDP$s over 1000 repetitions (named ``Emp" in Table \ref{Table-2}); we also compute the corresponding asymptotic standard deviation of $\FDP$ given in Corollary \ref{Corrollary-asym-var} with $\bmu$ and $\bSigma$ being replaced with their simulated values (named ``Asym" in Table \ref{Table-2}). The results are summarized in Table \ref{Table-2}. 

\begin{table} 
\centering
 \caption{Comparison for the standard deviation ($\times 100$) of FDPs}
\begin{tabular}{cccccccccc}
\hline\hline
$t$ & $p_1$ & Method & \textbf{M1} & \textbf{M2} & \textbf{M3} & \textbf{M4} & \textbf{M5} & \textbf{M6} & \textbf{M7}\\ \hline
& 50 & Asym & 4.37 & 6.11 & 5.13 & 6.91 & 6.11 & 6.12 & 6.15 \\
 && Emp & 4.43  & 6.08 & 5.16 & 6.60 & 5.73 &6.20 & 5.90\\
0.005 & 100 & Asym & 2.57 & 3.57 & 3.01 & 4.03 & 3.57 & 3.57 & 3.59 \\
 && Emp & 2.65 & 3.56 & 2.94 & 3.85 & 3.68 & 3.66 & 3.52 \\
& 200 & Asym & 1.37 & 1.88 & 1.59 & 2.12 & 1.88 & 1.89 & 1.89 \\
 && Emp & 1.33 & 1.83 & 1.60 & 2.06 & 1.86 & 1.79 & 1.95 \\ \hline
& 50 & Asym & 3.92 & 6.74 & 5.23 & 7.12 & 6.75 & 6.75 & 6.80 \\
 && Emp & 3.90 & 6.64 & 5.35 & 7.30 & 6.91 & 6.90 & 6.84\\
0.02 & 100 & Asym & 3.23 & 5.50 & 4.28 & 5.81 & 5.51 & 5.51 & 5.55 \\
 && Emp & 3.20 & 5.50 & 4.44 & 5.66 & 5.39 & 5.58 & 5.35\\
& 200 & Asym & 2.15 & 3.61 & 2.82 & 3.80 & 3.61 & 3.61 & 3.64 \\
 && Emp & 2.14 & 3.61 & 2.84 & 3.70 & 3.44 & 3.51 & 3.40\\ \hline
& 50 & Asym & 2.25 & 4.33 & 3.25 & 4.42 & 4.34 & 4.34 & 4.37 \\
 && Emp & 2.27 & 4.54 & 3.25 & 4.47 & 4.58 & 4.45 & 4.54\\
0.05 & 100 & Asym & 2.53 & 4.87 & 3.65 & 4.97 & 4.88 & 4.88 & 4.92 \\
 && Emp & 2.54 & 4.70 & 3.60 & 5.04 & 4.80 & 4.92 & 5.02\\
& 200 & Asym & 2.23 & 4.24 & 3.19 & 4.32 & 4.24 & 4.24 & 4.27 \\
 && Emp & 2.25 & 4.25 & 3.22 & 4.30 & 4.09 & 4.17 & 4.22\\ \hline
 \end{tabular} \label{Table-2}
 \end{table}

From this table, we observe that for all cases, the asymptotic standard deviation is very close to the sample standard deviation of $\FDP$s over 1000 repetitions. This validates our conclusion in Corollary \ref{Corrollary-asym-var}, and indicates that this asymptotic variance well approximates its corresponding true value in practice. Furthermore, for all the $p_1$ and $t$ combinations, the standard deviations of the FDP from \textbf{M1} are significantly smaller than those from the other models. This implies that the variances of FDP are inflated when dependence is present among the test statistics. Note that for these examples, the conditions in Theorem \ref{Thm-3} are not necessarily guaranteed; this reinforces our discussion in Section \ref{Sec-3.3} and demonstrates that the inflation of the FDP variance may occur in wide real applications. Among \textbf{M2}--\textbf{M6}, we observe that \textbf{M3} leads to the smallest inflation; this could be because that this model is close to the independent case, as the majority of $X_k$'s are independent, i.e., for $k=1,\ldots,1900$.

\subsection{Estimation of the asymptotic variance for FDP}
\label{Sec-6.3}

In this subsection, we assess the performance of the method in Section \ref{Sec-5} in estimating the asymptotic variance of FDP. As we observe in Section \ref{Sec-5}, our method relies on $\pi_0$, whose estimation methods are available in the literature. We incorporate these methods into our method and compare their performance. In particular, we consider: (1) the smoothed estimate by \citet{storey2003statistical}, named as ``Smoothed"; (2) the bootstrap estimate by \citet{storey2004strong}, named as ``Bootstrap"; (3) the nonparametric maximum likelihood estimate by \citet{langaas2005estimating}, named as ``Langaas"; and (4) the SLIM estimate by \citet{wang2010slim}, named as ``SLIM". We include the asymptotic standard deviation of FDP based on the true values of the parameters, i.e, $\bmu$ and $\bSigma$, for each case as a reference, named as ``True".  

We adopt the methods in Section \ref{Sec-6.1} to generate test statistics, and apply the aforementioned methods to estimate the asymptotic variances of FDPs. We only present the results for Model \textbf{M4} in Section \ref{Sec-6.1}; the results for the other models show similar patterns, and are omitted due to space limitation. The means and standard deviations of the estimated asymptotic standard deviations of FDPs over 1000 replicates are displayed in Table \ref{Table-3}. From this table, we observe that the Langaas and SLIM methods perform reasonably well in the estimation of the asymptotic standard deviation of FDPs; in particular, these methods result in estimates close to the true asymptotic standard deviation based on our Corollary \ref{Corrollary-asym-var} and lead to smaller standard deviations. We observe that this is because in the estimation of $\pi_0$, these methods are more robust to the dependence among the test statistics, whereas the Smoothed and Bootstrap methods can work well only when the test statistics are independent. We provide the average estimated number of identified alternative hypotheses, i.e., $p_1 = p (1-\pi_0)$ and the corresponding standard deviations over 1000 replicates in Table \ref{Table-4}; from this table, we observe that the performance in the estimation of $p_1$ is in the order of 
\begin{eqnarray*}
\mbox{Bootstrap} < \mbox{Smoothed} < \mbox{Langaas} < \mbox{SLIM}, 
\end{eqnarray*}
which is consistent with our observation in Table \ref{Table-3}. 




\begin{table} 
\centering
\caption{Mean ($\times 1000$) and standard deviation ($\times 1000$, in parenthesis) of the estimated asymptotic standard deviation of FDPs over 1000 replicates.}
\begin{tabular}{ccccccc}
\hline\hline
$t$ & $p_1$ & True & Smoothed & Bootstrap & Langaas & SLIM \\ \hline
& 50 & 6.91 & 3.46 (3.12) & 5.83 (2.70) & 6.00 (1.02) & 7.31 (1.52) \\
0.005 & 100 & 4.03 & 3.13 (2.82) & 4.01 (1.43) & 3.68 (0.42) & 4.23 (0.60) \\
& 200 & 2.12 & 2.65 (2.45) & 2.06 (0.37) & 2.02 (0.14) & 2.18 (0.16) \\ \hline
& 50 & 7.12 & 3.57 (2.92) & 5.81 (1.79) & 6.70 (0.62) & 6.97 (0.43) \\
0.02 & 100 & 5.81 & 3.56 (2.48) & 5.39 (1.02) & 5.45 (0.54) & 5.90 (0.40) \\
& 200 & 3.80 & 3.46 (1.84) & 3.63 (0.57) & 3.64 (0.29) & 3.92 (0.22) \\ \hline
& 50 & 4.42 & 2.87 (2.35) & 4.24 (1.50) & 4.74 (0.40) & 4.32 (0.69) \\
0.05 & 100 & 4.97 & 3.04 (2.13) & 4.71 (0.49) & 4.91 (0.16) & 4.95 (0.09) \\
& 200 & 4.32 & 3.51 (1.52) & 4.16 (0.37) & 4.23 (0.27) & 4.41 (0.12) \\ \hline
\end{tabular} \label{Table-3}
\end{table}
 
\begin{table} 
\centering
\caption{Mean and standard deviation (in parenthesis) of the estimated number of alternative hypotheses over 1000 replicates.}
\begin{tabular}{cccccc}
\hline\hline
$t$ & $p_1$ & Smoothed & Bootstrap & Langaas & SLIM \\ \hline
& 50 & 103 (122) & 107 (80) & 90 (53) & 55 (20) \\
0.005 & 100 & 136 (135) & 153 (86) & 138 (51) & 102 (21) \\
& 200 & 214 (152) & 251 (77) & 239 (49) & 202 (20) \\ \hline
& 50 & 103 (122) & 110 (89) & 90 (55) & 55 (21) \\
0.02 & 100 & 133 (136) & 152 (85) & 137 (51) & 103 (21) \\
& 200 & 205 (154) & 251 (80) & 237 (50) & 202 (20) \\ \hline
& 50 & 94 (117) & 108 (85) & 90 (58) & 53 (21)  \\
0.05 & 100 & 129 (135) & 150 (84) & 136 (54) & 100 (20) \\
& 200 & 207 (147) & 252 (80) & 234 (56) & 197 (20) \\ \hline
\end{tabular} \label{Table-4}
\end{table}

\section{Real Data Analysis}

We apply our method to a genome-wide association study (GWAS); more specifically, an eQTL mapping study. The data are formed of 210 subjects from the International Hapmap Project, including three groups: (1) 90 subjects from Asian formed of 45 Japanese from Tokyo, 45 Han Chinese from Beijing, named ``Grp1"; (2) 60 Utah residents with ancestry from northern and western Europe, named ``Grp2"; and (3) 60 Nigerian from Ibadan, named ``Grp3". More details of the data can be found from the web link: \url{ftp://ftp.sanger.ac.uk/pub/genevar/}; see also \citet{bradic2011penalized}.  We adopt the existing methods to construct test statistics for testing the association between the expression level of the gene  CCT8 and the SNP genotypes for each group of the subjects \citep{bradic2011penalized, fan2012estimating}.

The SNP genotypes have three possibilities:  $\text{SNP} = 0$ means no polymorphism, $\text{SNP} = 1$ indicates one nucleotide has polymorphism, and $\text{SNP} = 2$ implies both nucleotides have polymorphisms. Adopting the strategy in \citet{bradic2011penalized}, we transform the SNP data into two dummy variables, namely $(d_1, d_2)$:
\begin{itemize}
\item $(d_1, d_2) = (0, 0)$ for $\text{SNP} = 0$;
\item $(d_1, d_2) = (1, 0)$ for $\text{SNP} = 1$;
\item $(d_1, d_2) = (0, 1)$ for $\text{SNP} = 2$.
\end{itemize}
For subject $i$, denote by $Y_i$ and $(d_{1,i,j}, d_{2,i,j})$ the logarithm-transformed value of the expression level of gene CCT8 and the aforementioned dummy variables of the $j$th SNP. We consider two sets of marginal linear regression models: 
\begin{eqnarray*}
\mbox{Set 1:} \qquad  Y_i &=& \alpha_{1,j} + \beta_{1,j} d_{1,i,j} + \epsilon_{1,i,j}, \quad \mbox{for } j = 1,\ldots, p; \\
\mbox{Set 2:} \qquad Y_i &=& \alpha_{2,j} + \beta_{2,j} d_{2,i,j} + \epsilon_{2,i,j}, \quad \mbox{for } j = 1,\ldots, p. 
\end{eqnarray*}
Here we impute the missing SNP data by 0, and exclude the SNPs that are identical to each other for all the subjects. 

For each group of the data, we combine aformentioned two sets of models, and follow the procedure in \citet{fan2012estimating} to generate the test statistics; this leads to $2p$ test statistics. In detail, we use the procedure described in Section \ref{Sec-2} to establish an initial set of test statistics and their corresponding covariance matrix. We then use the dependence-adjusted procedure in \citet{fan2012estimating} to remove the top ten principal factors from the covariance matrix, and take a standardization step to generate a new set of test statistics of unit marginal variances; this helps reduce the dependence among the test statistics.

With the resultant test statistics for each group, we estimate the asymptotic limits and standard deviations of $\FDP(t)$ based on the following setups:
\begin{itemize}

    \item For each group of test statistics, we consider two thresholds of $p$-value, which are chosen such that the estimates of the $\FDP(t)$ limits are reasonable values. 
    
    \item The asymptotic limits are estimated from \eqref{asymptotic.limit}, denoted by $\widehat{\text{FDP}}(t)$.
    
    \item The asymptotic standard deviations are computed based on the methods in Section \ref{Sec-5}. As we have observed in Section \ref{Sec-6.3} that the standard deviation estimates based on the $\pi_0$ estimates from the ``Langaas" and ``SLIM" methods outperform the others; therefore, we present only the results from these methods. Furthermore, to assess how dependence may affect the estimation of these standard deviation estimates, we report these estimates based on the assumptions that the test statistics are independent and dependent, respectively. Therefore, for each group of subjects, we have four estimates, respectively denoted by $S_{\widehat{\text{FDP}}(t)}^{\text{Langaas}}$, $S_{\widehat{\text{FDP}}(t), \text{Ind}}^{\text{Langaas}}$, $S_{\widehat{\text{FDP}}(t)}^{\text{SLIM}}$,  and $S_{\widehat{\text{FDP}}(t), \text{Ind}}^{\text{SLIM}}$. 
\end{itemize}

We summarize our estimation results in Table \ref{Table-5}. From this table, we observe that under different assumptions of whether the dependence is present among the test statistics 
significantly affect the asymptotic standard deviation of $\FDP(t)$. The ratios  $S_{\widehat{\text{FDP}}(t)}^{\text{Langaas}}/S_{\widehat{\text{FDP}}(t), \text{Ind}}^{\text{Langaas}}$ and $S_{\widehat{\text{FDP}}(t)}^{\text{SLIM}}/S_{\widehat{\text{FDP}}(t), \text{Ind}}^{\text{SLIM}}$ range from 4 to 7 folds. Moreover, the ratios of the FDP standard deviation estimates to its limit values $\widehat{\text{FDP}}(t)$ are between 0.25 and 0.82 and thus the uncertainty of FDP is nonnegligible. These observations reinforce the statement that in an FDP related inference, it may not be sufficient to consider only the limit value of FDP (or equivalently, FDR); we suggest that it is important to correctly account for the dependence; otherwise, it might be over optimistic in the conclusion of how well the FDP is controlled. For example, when $t = 0.005$ in Grp1, the 95 percentile (say, two standard deviations above the mean) of the $\FDP(t)$ is estimated as $0.315 + 0.153 \times 2 = 0.621$ under dependence; in contrast, if we assume that the test statistics are independent, the corresponding estimate is $0.315 + 0.031 \times 2= 0.377$. Therefore, ignoring the dependence leads to an over optimistic belief of how well the upper bound of FDP is controlled; such a nonignorable difference may significantly change the statistical interpretation of the results in practice.

\begin{table} 
\centering
\caption{Estimation results for the GWAS study.} 
\begin{tabular}{cccccccccc}
\hline\hline
Population & \#SNP & $t$ & $\widehat{\text{FDP}}(t)$ & $S_{\widehat{\text{FDP}}(t)}^{\text{Langaas}}$ & $S_{\widehat{\text{FDP}}(t), \text{Ind}}^{\text{Langaas}}$ & $S_{\widehat{\text{FDP}}(t)}^{\text{SLIM}}$ & $S_{\widehat{\text{FDP}}(t),\text{Ind}}^{\text{SLIM}}$\\
\hline
Grp1 & 73 & 0.005 & 0.315 & 0.153 & 0.031 & 0.259 & 0.052\\
 & 105 & 0.01 & 0.377 & 0.166 & 0.031 & 0.244 & 0.045\\ \hline
Grp2 & 166 & 0.01 & 0.199 & 0.107 & 0.018 & 0.148 & 0.025\\
 & 199 & 0.02 & 0.441 & 0.125 & 0.019 & 0.167 & 0.025\\ \hline
Grp3 & 89 & 0.002 & 0.147 & 0.069 & 0.017 & 0.092 & 0.022\\
 & 135 & 0.005 & 0.371 & 0.092 & 0.019 & 0.124 & 0.025\\
 \hline
\end{tabular} \label{Table-5}
\end{table}

\section{Discussion}

In this article, we have established the theoretical results for the asymptotic variance of $\FDP$ in the framework that the test statistics are weakly dependent. We observe that with mild conditions, the asymptotic variance of $\FDP$ under dependence is usually greater than that under independence; this is also demonstrated by our numerical studies: even under weak dependence, the variances of the FDPs can be numerically much larger than those under independence. It is thus crucial for us to take into account the dependence in the estimation of the variance of the FDP and report it together with its mean. Furthermore, we have observed in the real data example that the standard deviation estimate of $\FDP$ under the dependence assumption is significantly different from (usually greater than) that under the independence assumption. In practice, we suggest that we presumably include the dependence in the estimation, unless there is a strong evidence of otherwise. This will be helpful for us to better understand how well the FDP is controlled, and avoid over optimistic conclusions.   

In this article, we have studied the asymptotic uncertainty of FDP in the same framework as \citet{fan2012estimating}. Under the assumption that the test statistics are weakly dependent, we obtain the asymptotic expansion of $\FDP(t)$ and establish the explicit formula for computing the asymptotic variance of $\FDP(t)$. Within this framework, we observe many interesting open problems that can be studied in the future. We list several as examples; there are many others. 

First, our proposed method for estimating the variance of $\FDP(t)$ in Section \ref{Sec-5} relies on the estimation of $\pi_0$; there, we adopted existing methods in the literature. From simulation studies in Section \ref{Sec-6.3}, we observe that the performance of the variance estimate for $\FDP(t)$ relies heavily on that of $\pi_0$; although the Langaas and SLIM methods perform reasonably well, we believe that they are not yet optimal, as none of these $\pi_0$ estimation methods have sufficiently accounted for the dependence among the test statistics. Therefore, we expect that establishing improved $\pi_0$ estimation methods that can better accommodate the dependence among the test statistics would be of particular interest, and can be incorporated into our estimation method in Section \ref{Sec-5}. 

Second, we have imposed some weak dependence assumptions in our development; this may limit the method in some applications. Practically, one possible solution is not to use the original test statistics, but
the dependence-adjusted ones, as they are more powerful and often weakly dependent \citep{friguet2011estimation, fan2012estimating}; yet there is no theoretical guarantees. Therefore, it is challenging, but would be of interest to study the asymptotic variance of $\FDP(t)$ under arbitrary dependence. 

Third, we assume that the test statistics are normally distributed in this article. It would be of great both practical and theoretical value to generalize our study to accommodate multiple testing problems where the test statistics follow other popular distributions, e.g., $t$, $F$, and chi-square distributions. We refer to \citet{zhuo2020test} as an example of the recent works on dependent $t$-tests and leave the relevant investigation to our future work.

\begin{center}
{SUPPLEMENTARY MATERIALS}
\end{center}
The supplementary materials contain
technical details for the theorems in Section \ref{Sec-3}.

\bibliographystyle{asa}
\bibliography{FDP}

\begin{thebibliography}{44}
\newcommand{\enquote}[1]{``#1''}
\expandafter\ifx\csname natexlab\endcsname\relax\def\natexlab#1{#1}\fi

\bibitem[{Basu et~al.(2021)Basu, Fu, Saretto, and Sun}]{basu2021empirical}
Basu, P., Fu, L., Saretto, A., and Sun, W. (2021), \enquote{Empirical bayes
  control of the false discovery exceedance,} \textit{arXiv preprint
  arXiv:2111.03885}.

\bibitem[{Benjamini and Hochberg(1995)}]{benjamini1995controlling}
Benjamini, Y. and Hochberg, Y. (1995), \enquote{Controlling the false discovery
  rate: a practical and powerful approach to multiple testing,} \textit{Journal
  of the Royal statistical society: series B (Methodological)}, 57, 289--300.

\bibitem[{Benjamini et~al.(2001)Benjamini, Yekutieli,
  et~al.}]{benjamini2001control}
Benjamini, Y., Yekutieli, D., et~al. (2001), \enquote{The control of the false
  discovery rate in multiple testing under dependency,} \textit{The annals of
  statistics}, 29, 1165--1188.

\bibitem[{Bonferroni(1936)}]{bonferroni1936teoria}
Bonferroni, C. (1936), \enquote{Teoria statistica delle classi e calcolo delle
  probabilita,} \textit{Pubblicazioni del R Istituto Superiore di Scienze
  Economiche e Commericiali di Firenze}, 8, 3--62.

\bibitem[{Bradic et~al.(2011)Bradic, Fan, and Wang}]{bradic2011penalized}
Bradic, J., Fan, J., and Wang, W. (2011), \enquote{Penalized composite
  quasi-likelihood for ultrahigh dimensional variable selection,}
  \textit{Journal of the Royal Statistical Society: Series B (Statistical
  Methodology)}, 73, 325--349.

\bibitem[{Clarke et~al.(2009)Clarke, Hall, et~al.}]{clarke2009robustness}
Clarke, S., Hall, P., et~al. (2009), \enquote{Robustness of multiple testing
  procedures against dependence,} \textit{The Annals of Statistics}, 37,
  332--358.

\bibitem[{Delattre and Roquain(2011)}]{delattre2011false}
Delattre, S. and Roquain, E. (2011), \enquote{On the false discovery proportion
  convergence under Gaussian equi-correlation,} \textit{Statistics \&
  Probability Letters}, 81, 111--115.

\bibitem[{Delattre and Roquain(2015)}]{delattre2015new}
--- (2015), \enquote{New procedures controlling the false discovery proportion
  via Romano--Wolf’s heuristic,} \textit{The Annals of Statistics}, 43,
  1141--1177.

\bibitem[{Delattre and Roquain(2016)}]{delattre2016empirical}
--- (2016), \enquote{On empirical distribution function of high-dimensional
  Gaussian vector components with an application to multiple testing,}
  \textit{Bernoulli}, 22, 302--324.

\bibitem[{D{\"o}hler and Roquain(2020)}]{dohler2020controlling}
D{\"o}hler, S. and Roquain, E. (2020), \enquote{Controlling the false discovery
  exceedance for heterogeneous tests,} \textit{Electronic Journal of
  Statistics}, 14, 4244--4272.

\bibitem[{Dudoit et~al.(2004)Dudoit, van~der Laan, and
  Pollard}]{dudoit2004multiple}
Dudoit, S., van~der Laan, M.~J., and Pollard, K.~S. (2004), \enquote{Multiple
  testing. Part I. Single-step procedures for control of general type I error
  rates,} \textit{Statistical Applications in Genetics and Molecular Biology},
  3, 1--69.

\bibitem[{Efron(2007)}]{efron2007correlation}
Efron, B. (2007), \enquote{Correlation and large-scale simultaneous
  significance testing,} \textit{Journal of the American Statistical
  Association}, 102, 93--103.

\bibitem[{Fan and Han(2017)}]{fan2017estimation}
Fan, J. and Han, X. (2017), \enquote{Estimation of the false discovery
  proportion with unknown dependence,} \textit{Journal of the Royal Statistical
  Society: Series B (Statistical Methodology)}, 79, 1143--1164.

\bibitem[{Fan et~al.(2012)Fan, Han, and Gu}]{fan2012estimating}
Fan, J., Han, X., and Gu, W. (2012), \enquote{Estimating false discovery
  proportion under arbitrary covariance dependence,} \textit{Journal of the
  American Statistical Association}, 107, 1019--1035.

\bibitem[{Fan et~al.(2019)Fan, Ke, Sun, and Zhou}]{fan2019farmtest}
Fan, J., Ke, Y., Sun, Q., and Zhou, W.-X. (2019), \enquote{FarmTest:
  Factor-adjusted robust multiple testing with approximate false discovery
  control,} \textit{Journal of the American Statistical Association}, 1--29.

\bibitem[{Ferreira and Zwinderman(2006)}]{ferreira2006benjamini}
Ferreira, J. and Zwinderman, A. (2006), \enquote{On the Benjamini--Hochberg
  method,} \textit{The Annals of Statistics}, 34, 1827--1849.

\bibitem[{Finner and Roters(2002)}]{finner2002multiple}
Finner, H. and Roters, M. (2002), \enquote{Multiple hypotheses testing and
  expected number of type I. errors,} \textit{The Annals of Statistics}, 30,
  220--238.

\bibitem[{Friguet and Causeur(2011)}]{friguet2011estimation}
Friguet, C. and Causeur, D. (2011), \enquote{Estimation of the proportion of
  true null hypotheses in high-dimensional data under dependence,}
  \textit{Computational statistics \& data analysis}, 55, 2665--2676.

\bibitem[{Ge and Li(2012)}]{ge2012control}
Ge, Y. and Li, X. (2012), \enquote{Control of the false discovery proportion
  for independently tested null hypotheses,} \textit{Journal of Probability and
  Statistics}, 2012.

\bibitem[{Genovese and Wasserman(2004)}]{genovese2004stochastic}
Genovese, C. and Wasserman, L. (2004), \enquote{A stochastic process approach
  to false discovery control,} \textit{The Annals of Statistics}, 32,
  1035--1061.

\bibitem[{Genovese and Wasserman(2006)}]{genovese2006exceedance}
Genovese, C.~R. and Wasserman, L. (2006), \enquote{Exceedance control of the
  false discovery proportion,} \textit{Journal of the American Statistical
  Association}, 101, 1408--1417.

\bibitem[{Guo and Romano(2007)}]{guo2007generalized}
Guo, W. and Romano, J. (2007), \enquote{A generalized Sidak-Holm procedure and
  control of generalized error rates under independence,} \textit{Statistical
  applications in genetics and molecular biology}, 6.

\bibitem[{Hemerik et~al.(2019)Hemerik, Solari, and
  Goeman}]{hemerik2019permutation}
Hemerik, J., Solari, A., and Goeman, J.~J. (2019), \enquote{Permutation-based
  simultaneous confidence bounds for the false discovery proportion,}
  \textit{Biometrika}, 106, 635--649.

\bibitem[{Hochberg(1988)}]{hochberg1988sharper}
Hochberg, Y. (1988), \enquote{A sharper Bonferroni procedure for multiple tests
  of significance,} \textit{Biometrika}, 75, 800--802.

\bibitem[{Holland and Copenhaver(1987)}]{holland1987improved}
Holland, B.~S. and Copenhaver, M.~D. (1987), \enquote{An improved sequentially
  rejective Bonferroni test procedure,} \textit{Biometrics}, 417--423.

\bibitem[{Holm(1979)}]{holm1979simple}
Holm, S. (1979), \enquote{A simple sequentially rejective multiple test
  procedure,} \textit{Scandinavian journal of statistics}, 65--70.

\bibitem[{Korn et~al.(2004)Korn, Troendle, McShane, and
  Simon}]{korn2004controlling}
Korn, E.~L., Troendle, J.~F., McShane, L.~M., and Simon, R. (2004),
  \enquote{Controlling the number of false discoveries: application to
  high-dimensional genomic data,} \textit{Journal of Statistical Planning and
  Inference}, 124, 379--398.

\bibitem[{Langaas et~al.(2005)Langaas, Lindqvist, and
  Ferkingstad}]{langaas2005estimating}
Langaas, M., Lindqvist, B.~H., and Ferkingstad, E. (2005), \enquote{Estimating
  the proportion of true null hypotheses, with application to DNA microarray
  data,} \textit{Journal of the Royal Statistical Society: Series B
  (Statistical Methodology)}, 67, 555--572.

\bibitem[{Lehmann and Romano(2005)}]{lehmann2005generalizations}
Lehmann, E. and Romano, J.~P. (2005), \enquote{GENERALIZATIONS OF THE
  FAMILYWISE ERROR RATE,} \textit{The Annals of Statistics}, 33, 1138--1154.

\bibitem[{Lehmann and Romano(2012)}]{lehmann2012generalizations}
Lehmann, E.~L. and Romano, J.~P. (2012), \enquote{Generalizations of the
  familywise error rate,} in \textit{Selected Works of EL Lehmann}, Springer,
  pp. 719--735.

\bibitem[{Owen(2005)}]{owen2005variance}
Owen, A.~B. (2005), \enquote{Variance of the number of false discoveries,}
  \textit{Journal of the Royal Statistical Society: Series B (Statistical
  Methodology)}, 67, 411--426.

\bibitem[{Pollard and van~der Laan(2004)}]{pollard2004choice}
Pollard, K.~S. and van~der Laan, M.~J. (2004), \enquote{Choice of a null
  distribution in resampling-based multiple testing,} \textit{Journal of
  Statistical Planning and Inference}, 125, 85--100.

\bibitem[{Rom(1990)}]{rom1990sequentially}
Rom, D.~M. (1990), \enquote{A sequentially rejective test procedure based on a
  modified Bonferroni inequality,} \textit{Biometrika}, 77, 663--665.

\bibitem[{Sarkar(2006)}]{sarkar2006false}
Sarkar, S.~K. (2006), \enquote{False discovery and false nondiscovery rates in
  single-step multiple testing procedures,} \textit{The Annals of Statistics},
  34, 394--415.

\bibitem[{{\v{S}}id{\'a}k(1967)}]{vsidak1967rectangular}
{\v{S}}id{\'a}k, Z. (1967), \enquote{Rectangular confidence regions for the
  means of multivariate normal distributions,} \textit{Journal of the American
  Statistical Association}, 62, 626--633.

\bibitem[{Simes(1986)}]{simes1986improved}
Simes, R.~J. (1986), \enquote{An improved Bonferroni procedure for multiple
  tests of significance,} \textit{Biometrika}, 73, 751--754.

\bibitem[{Storey(2002)}]{storey2002direct}
Storey, J.~D. (2002), \enquote{A direct approach to false discovery rates,}
  \textit{Journal of the Royal Statistical Society: Series B (Statistical
  Methodology)}, 64, 479--498.

\bibitem[{Storey et~al.(2004)Storey, Taylor, and Siegmund}]{storey2004strong}
Storey, J.~D., Taylor, J.~E., and Siegmund, D. (2004), \enquote{Strong control,
  conservative point estimation and simultaneous conservative consistency of
  false discovery rates: a unified approach,} \textit{Journal of the Royal
  Statistical Society: Series B (Statistical Methodology)}, 66, 187--205.

\bibitem[{Storey and Tibshirani(2003)}]{storey2003statistical}
Storey, J.~D. and Tibshirani, R. (2003), \enquote{Statistical significance for
  genomewide studies,} \textit{Proceedings of the National Academy of
  Sciences}, 100, 9440--9445.

\bibitem[{Sun and Cai(2009)}]{sun2009large}
Sun, W. and Cai, T. (2009), \enquote{Large-scale multiple testing under
  dependence,} \textit{Journal of the Royal Statistical Society: Series B
  (Statistical Methodology)}, 71, 393--424.

\bibitem[{Sun et~al.(2015)Sun, Reich, Cai, Guindani, and
  Schwartzman}]{sun2015false}
Sun, W., Reich, B.~J., Cai, T.~T., Guindani, M., and Schwartzman, A. (2015),
  \enquote{False discovery control in large-scale spatial multiple testing,}
  \textit{Journal of the Royal Statistical Society: Series B (Statistical
  Methodology)}, 77, 59--83.

\bibitem[{van~der Laan et~al.(2004)van~der Laan, Dudoit, and
  Pollard}]{van2004augmentation}
van~der Laan, M.~J., Dudoit, S., and Pollard, K.~S. (2004),
  \enquote{Augmentation procedures for control of the generalized family-wise
  error rate and tail probabilities for the proportion of false positives,}
  \textit{Statistical applications in genetics and molecular biology}, 3,
  1--25.

\bibitem[{Wang et~al.(2010)Wang, Tuominen, and Tsai}]{wang2010slim}
Wang, H.-Q., Tuominen, L.~K., and Tsai, C.-J. (2010), \enquote{SLIM: a sliding
  linear model for estimating the proportion of true null hypotheses in
  datasets with dependence structures,} \textit{Bioinformatics}, 27, 225--231.

\bibitem[{Zhuo et~al.(2020)Zhuo, Jiang, and Di}]{zhuo2020test}
Zhuo, B., Jiang, D., and Di, Y. (2020), \enquote{Test-statistic correlation and
  data-row correlation,} \textit{Statistics \& Probability Letters}, 167,
  108903.

\end{thebibliography}

\end{document}



\def\spacingset#1{\renewcommand{\baselinestretch}%
{#1}\small\normalsize} \spacingset{1}


\if1\blind
{
  \title{\bf Asymptotic Uncertainty of False Discovery Proportion: Supplementary Materials}
  \author{Meng Mei$^\text{a}$, Tao Yu$^\text{b}$, and Yuan Jiang$^\text{a}$\thanks{Yuan Jiang is the corresponding author. Meng Mei and Yuan Jiang's research is supported in part by National Institutes of Health grant R01 GM126549. Tao Yu's research is supported in part by the Singapore Ministry Education Academic Research Tier 1 Funds: R-155-000-202-114.}}
  \affil{$^\text{a}$ Department of Statistics, Oregon State University\\
         $^\text{b}$ Department of Statistics \& Data Science, National University of Singapore}
  \date{}
  \maketitle
} \fi

\if0\blind
{
  \bigskip
  \bigskip
  \bigskip
  \begin{center}
    {\LARGE\bf Asymptotic Uncertainty of False Discovery Proportion: Supplementary Materials}
\end{center}
  \medskip
} \fi





\spacingset{1.9} 

\section{Notation and Review of Technical Results in the Main Article} \label{sec-notation-review}

We need the following notation in our development. Throughout, we use ``$\lesssim$" (``$\gtrsim$") to denote smaller (greater) than up to a universal constant. For any $t\in[0,1]$, we denote $\xi_j = \Phi(z_{t/2} + \mu_j) + \Phi(z_{t/2} -\mu_j)$ and $\bar\xi = \frac{1}{p_1}\sum_{j\in \mathcal{H}_1} \xi_j$. Consider the function 
\begin{equation*}
H(\mu) = \phi(|z_{t/2}| + \mu)(|z_{t/2}| + \mu) + \phi(|z_{t/2}| - \mu)(|z_{t/2}| - \mu); 
\end{equation*}
there exists a unique root, denoted by $\mu_t$, of this function for $\mu\in (|z_{t/2}|, |z_{t/2}|+1)$. Let $C_t^{\max} = \sup_{\mu \in (-\mu_t, \mu_t)} H(\mu)$. 

Furthermore, recall that in the main article, the $i$th test is given by $t_i = \mathrm{1}(|Z_i| > |z_{t/2}|) = \mathrm{1}(P_i < t)$.
Let 
\begin{eqnarray*}f_{\{t_1,  \dots, t_k\}}^{(i_1,  \dots, i_k)}(a, t) = \dfrac{\partial^{\sum_{j = 1}^{k}i_j} f_{\{t_1, \ldots, t_k\}}( \rho_1, \ldots, \rho_k; a, t)}{\partial (\rho_1)^{i_1}   \ldots \partial(\rho_k)^{i_k}}\Big|_{\rho_j = 0, j=1,\ldots, k},
\end{eqnarray*}
where 
$$ f_{\{t_1,  \ldots, t_k\}}( \rho_1, \ldots, \rho_k; a, t) = \Phi \left(\dfrac{z_{t/2} - \mu_a - \rho_1x_{t_1} -  \ldots - \rho_kx_{t_k}}{\sqrt{1 - \rho_1^2  - \ldots -\rho_k^2}}\right).$$
Let $\xi_i = E(t_i)$ for $i\in \mathcal{H}_1$; clearly we have $\xi_i = \Phi(z_{t/2} + \mu_i) + \Phi(z_{t/2} - \mu_i)$. Denote $\bar \xi = \frac{1}{p_1} \sum_{i=1}^{p_1} \xi_i$. Then the converged limit of $\mbox{FDP}(t)$ on the right hand side of (6) in the main article can be written to be $\frac{p_0t}{p_0t + p_1 \bar \xi}$.

The following theorem is Theorem 1 in Section 3.1 of the main article; it establishes the asymptotic expansion of the $\FDP(t)$ in the framework of the weak dependence. 

\begin{theorem} \label{Thm-1}
Suppose $(Z_{1},\ldots,Z_{p})^{T} \sim N((\mu_{1},\ldots,\mu_{p})^{T},\bSigma)$ with unit variances, i.e., $\sigma_{jj}=1\text{ for } j=1,\ldots,p$. Assume that $(Z_{1},\dots,Z_{p})^T$ are weakly dependent as defined in (5) in the main article, that $\limsup_{p\to \infty} p_0t/(p_1\bar \xi) < 1$, and that as $p$ is sufficiently large, for a universal constant $C>0$,
\begin{align}
&\sum_{i\neq j; i,j\in \mathcal{H}_0} \sigma_{ij}^2 \geq \frac{C_t^{\max}}{\phi(z_{t/2})|z_{t/2}|} \sum_{i\in \mathcal{H}_1, j\in \mathcal{H}_0, \mu_i\in[-\mu_t, \mu_t]} \sigma_{ij}^2, \label{Cond-1} \\
&\sum_{i\neq j; i,j\in \mathcal{H}_0} \sigma_{ij}^2 + p \geq C \sum_{i\neq j; i, j\in \mathcal{H}_1} \sigma_{ij}^2, \label{Cond-2}\\
&\sum_{i\neq j} \sigma_{ij}^4 = o\left(\sum_{i\neq j; i,j\in \mathcal{H}_0} \sigma_{ij}^2  + p_0\right). \label{Cond-3}
\end{align}
We have the following asymptotic expansion of $\FDP(t)$:
\begin{equation} \label{asymptotic.expansion}
\FDP(t) = \frac{E(\bar V)}{E(\bar R)} + m(\bar{V},\bar{R}) +r(\bar V, \bar R), 
\end{equation}
where $\bar V = V(t)/p$, $\bar R = R(t)/p$,
\begin{equation*}
m(\bar{V},\bar{R}) =  \frac{\bar{V}}{E(\bar{R})} - \frac{E(\bar{V})}{\{E(\bar{R})\}^2} \bar{R},
\end{equation*}
and the remainder term $r(\bar V, \bar R)$ satisfies that $E\{r^2(\bar V, \bar R)\} = o[\Var\{m(\bar{V},\bar{R})\}]$.
\end{theorem}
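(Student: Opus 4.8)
The plan is to treat $\FDP(t)=\bar V/\bar R$ as a smooth function of the random pair $(\bar V,\bar R)$ and carry out a first-order delta-method expansion about $(E\bar V,E\bar R)$. The value at the mean is $E\bar V/E\bar R$, and the partial derivatives of $(v,r)\mapsto v/r$ at the mean are $1/E\bar R$ and $-E\bar V/(E\bar R)^2$; a direct computation confirms that the resulting first-order term is \emph{exactly} $m(\bar V,\bar R)$, so the entire content of the statement is the control of the Taylor remainder $r(\bar V,\bar R)$. Writing $\delta_V=\bar V-E\bar V$ and $\delta_R=\bar R-E\bar R$, one has $m=(E\bar R\,\delta_V-E\bar V\,\delta_R)/(E\bar R)^2$, which is centered, so $\Var\{m\}=E\{m^2\}$.

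The first step I would take is to establish an exact algebraic identity for the remainder. A short manipulation of $\bar V/\bar R-E\bar V/E\bar R-m$ yields, on the event $\{R>0\}$,
\begin{equation*}
r(\bar V,\bar R)=-\frac{\bar R-E\bar R}{\bar R}\,m(\bar V,\bar R),
\end{equation*}
so that $E\{r^2\}=E\big[\{(\bar R-E\bar R)/\bar R\}^2\,m^2\big]$. On $\{R=0\}$ both $\bar V$ and $\bar R$ vanish, so $m=0$ and $|r|=E\bar V/E\bar R\le 1$; since $\bar R$ concentrates this event is negligible. The task thus reduces to showing that the random weight $(\bar R-E\bar R)^2/\bar R^2$ is small in the averaged sense, i.e. $E\big[\{(\bar R-E\bar R)/\bar R\}^2 m^2\big]=o(E\{m^2\})$.

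To control the random denominator I would split on $A=\{\bar R\ge E\bar R/2\}$. On $A$ one has $(\bar R-E\bar R)^2/\bar R^2\le 4\,\delta_R^2/(E\bar R)^2$, and since $E\bar R=(p_0t+p_1\bar\xi)/p$ is bounded away from zero, the contribution is governed by the fourth-moment bound $E[\delta_R^2\,m^2]=o\big((E\bar R)^2\,E\{m^2\}\big)$. On $A^{c}$ I would use the crude bound $|m|\le 2/E\bar R$ together with a concentration estimate $P(\bar R<E\bar R/2)\to0$ obtained from $\Var(\bar R)$ (upgraded to a higher moment via Condition \eqref{Cond-3} if a faster rate is needed). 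The genuine obstacle is therefore the fourth-moment estimate on $A$. Setting $L:=(E\bar R)^2 m=E\bar R\,\delta_V-E\bar V\,\delta_R$, both $L$ and $\delta_R$ are centered sums $\tfrac1p\sum_i c_i(t_i-Et_i)$ of weakly dependent indicators, so $E[\delta_R^2\,m^2]=(E\bar R)^{-4}E[\delta_R^2 L^2]$ is a fourth-order moment of such sums. I would decompose this moment into its Gaussian part, which factorizes as $\Var(\bar R)\,E\{m^2\}$ plus a $\Cov(\bar R,m)^2$ term (itself $\le\Var(\bar R)E\{m^2\}$ by Cauchy--Schwarz), and its genuinely fourth-order cumulant corrections. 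The Gaussian part is handled by the weak dependence, which forces $\Var(\bar R)=o(1)=o((E\bar R)^2)$, while the cumulant corrections are sums over quadruples of indices of products of the $\sigma_{ij}$, computed through the derivative expansions $f^{(i_1,\dots,i_k)}_{\{t_1,\dots,t_k\}}$ recalled above (whose leading order in the two-sided setting is $\sigma_{ij}^2$), and are rendered negligible precisely by Condition \eqref{Cond-3}, $\sum_{i\neq j}\sigma_{ij}^4=o(\sum_{i\neq j;\,i,j\in\mathcal{H}_0}\sigma_{ij}^2+p_0)$.

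The final ingredient is a matching lower bound on the normalizer. The smallness delivered above is relative to $\sum_{i\neq j;\,i,j\in\mathcal{H}_0}\sigma_{ij}^2+p_0$, so to conclude $E\{r^2\}=o(\Var\{m\})$ I must show $\Var\{m\}=E\{L^2\}/(E\bar R)^4\gtrsim (\sum_{i\neq j;\,i,j\in\mathcal{H}_0}\sigma_{ij}^2+p_0)/p^2$. This is where Conditions \eqref{Cond-1} and \eqref{Cond-2} enter: \eqref{Cond-1} ensures that the null--null covariance mass dominates the cross contributions from the hard-to-detect alternatives ($\mu_i\in[-\mu_t,\mu_t]$) to $\Cov(\bar V,\bar R)$, and \eqref{Cond-2} ensures the alternative--alternative covariances do not inflate $\Var(\bar R)$ beyond the null-driven scale, so that $E\{L^2\}$ is bounded below by the null-pair sum. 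Combining this lower bound on $\Var\{m\}$ with the fourth-moment upper bound gives the claimed $E\{r^2(\bar V,\bar R)\}=o[\Var\{m(\bar V,\bar R)\}]$. I expect the fourth-moment decomposition and the bookkeeping of its cumulant corrections via the $f^{(i)}$ expansions to be the most laborious step, and the verification that Conditions \eqref{Cond-1}--\eqref{Cond-2} furnish the correct order lower bound on $\Var\{m\}$ to be the most delicate.
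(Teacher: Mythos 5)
Your proposal is correct in outline and shares the paper's overall architecture---a first-order expansion of $\bar V/\bar R$ around $(E\bar V, E\bar R)$ with $m(\bar V,\bar R)$ as the linear term, a reduction of the remainder to fourth central moments of $V$ and $R$ plus a small-denominator event, a lower bound $\Var\{m\}\gtrsim \bigl(\sum_{i\neq j;\,i,j\in\mathcal{H}_0}\sigma_{ij}^2+p_0\bigr)/p^2$ driven by \eqref{Cond-1} and the $\limsup$ condition, and fourth-moment upper bounds driven by \eqref{Cond-2}, \eqref{Cond-3} and weak dependence---but it differs in two technical devices, both arguably cleaner than the paper's. First, where the paper truncates the denominator ($\bar R\vee c$ with $0<c<0.5E(\bar R)$), Taylor-expands with a Lagrange remainder, and then separately controls the correction $\mathcal{J}=\bar V/\bar R-\bar V/(\bar R\vee c)$, you use the exact identity $r=-\{(\bar R-E\bar R)/\bar R\}\,m(\bar V,\bar R)$ (which is algebraically correct) and split on $\{\bar R\ge E\bar R/2\}$; both routes end up needing the same ingredients, namely a mixed/pure fourth-moment bound on the good event and a deviation bound $P(\bar R<E\bar R/2)=o(\Var\{m\})$ on the bad one. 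Second, where the paper expands $E[\{R-E(R)\}^4]$ directly over index-coincidence patterns (its Lemmas \ref{lem-5} and \ref{lem-6}, cases (C1)--(C4)) and kills the dangerous pairing terms via the factorization $\mathcal{I}_{1,1}(\mathcal{I}_{1,1}+\mathcal{I}_{2,2})$ with $\mathcal{I}_{1,1}=o(p^2)$ from weak dependence, you use the Wick/cumulant decomposition $E[X^2Y^2]=\Var(X)\Var(Y)+2\Cov(X,Y)^2+\sum\kappa_4$, and dispatch the Gaussian part with the single observation $\Var(\bar R)=o(1)$ (which weak dependence indeed supplies); this is a tidier way to organize the same bookkeeping.

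Two cautions so the plan closes without gaps. Your statement that the cumulant corrections are ``rendered negligible precisely by Condition \eqref{Cond-3}'' is too narrow: for distinct quadruples the leading corrections are third-order products of the type $\sum|\sigma_{ij}\sigma_{ik}\sigma_{jl}|$, which the paper controls by Cauchy--Schwarz/Jensen using \eqref{Cond-2} \emph{and} \eqref{Cond-3} against the lower bound \eqref{eq-lem7-1}, while the one-coincidence (triple-moment) terms are of order $p\sum_{i\neq j}|\sigma_{ij}|=O(p^{3-\delta})$ and need the weak-dependence condition itself, since the normalizer $p^4\Var\{m\}$ is only guaranteed to be of order $p^2p_0$. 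Relatedly, on the bad event your hedge is necessary: a second-moment Chebyshev bound does \emph{not} suffice, because $\Var(\bar R)$ contains first-order alternative-pair terms and need not be $o(\Var\{m\})$; you must use the fourth moment $P(\bar R<E\bar R/2)\le 16\,E\{(\bar R-E\bar R)^4\}/(E\bar R)^4$, i.e.\ exactly the bound your main step (and the paper's Lemma \ref{lem-7}) establishes.
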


A direct outcome of this theorem is the following Corollary, which establishes the explicit formula for the asymptotic variance of $\FDP(t)$. It is presented as Corollary 1 in the main article. 

\begin{corollary} \label{Corrollary-asym-var}
With all the conditions in Theorem \ref{Thm-1} effective, we have
\begin{equation} \label{asymptotic.variance}
\lim_{p\to\infty}\dfrac{\Var\left\{\FDP(t)\right\}}{V_1(t) + V_2(t)} = 1, 
\end{equation}
where 
\begin{align}
V_1(t) ={} & \dfrac{p_1^2 \bar{\xi}^2}{(p_0 t + p_1\bar{\xi})^4} \times p_0t(1-t) + \dfrac{p_0^2 t^2}{(p_0 t + p_1\bar{\xi})^4} \times p_1\bar{\xi}(1 - \bar{\xi}), \label{V.1}\\
V_2(t) ={} & \dfrac{2p_1^2 \bar{\xi}^2}{(p_0 t + p_1\bar{\xi})^4} \sum_{\substack{i<j \\ i,j\in\calH_0}} \Cov(t_i,t_j)  - \dfrac{2p_0p_1t\bar{\xi}}{(p_0 t + p_1\bar{\xi})^4} \sum_{\substack{i \in \calH_0 \\ j \in \calH_1}} \Cov(t_i, t_j) \notag \\
&+ \dfrac{2p_0^2 t^2}{(p_0 t + p_1\bar{\xi})^4} \sum_{\substack{i<j \\ i,j\in\calH_1}} \Cov(t_i,t_j), \label{V.2}
\end{align}
with
\begin{align}
\Cov(t_i,t_j) 
= {} & \int_{|z_i| > |z_{t/2}|} \int_{|z_j| > |z_{t/2}|} \phi(z_i, z_j; \mu_i,\mu_j, 1, 1, \sigma_{ij}) dz_i dz_j \notag \\ 
& - [\Phi(z_{t/2}+\mu_i) + \Phi(z_{t/2}-\mu_i)][\Phi(z_{t/2}+\mu_j) + \Phi(z_{t/2}-\mu_j)], \label{covariance.t}
\end{align}
in which $\phi(z_i, z_j; \mu_i,\mu_j, \sigma_i^2, \sigma_j^2, \sigma_{ij})$ is the probability density function of the bivariate normal distribution with mean $(\mu_i,\mu_j)^T$ and covariance matrix $\left(\begin{matrix} \sigma_i^2 & \sigma_{ij}\\ \sigma_{ij} & \sigma_j^2 \end{matrix}\right)$.
\end{corollary}

The following theorem, which is Theorem 2 in the main article, discusses how the signs of the $\Cov(t_i,t_j)$ is affected by the dependence structure among the test statistics. 

\begin{theorem} \label{Thm-2}
Suppose $(Z_{1},\ldots,Z_{p})^{T} \sim N((\mu_{1},\ldots,\mu_{p})^{T},\bSigma)$ with unit variances, i.e., $\sigma_{jj}=1\text{ for } j=1,\ldots,p$. Assume  $\sigma_{ij} \ne 0$ for the index pair $(i, j), i \ne j$. Then we have:
\begin{itemize}[noitemsep]
\item[$(a)$] for $i ,j\in\calH_0$, $\Cov(t_i, t_j) > 0$;
\item[$(b)$] for $i \in\calH_0$, $j \in\calH_1$, $\Cov(t_i, t_j) < 0$ if $|\mu_j| > 2|z_{t/2}|$ and $t < 2\{1-\Phi^{-1}(\sqrt{\log(3)/2})\}$;
\item[$(c)$] for $i ,j\in\calH_1$, $\Cov(t_i, t_j) > 0$ if $|\sigma_{ij}| \le |\mu|_{\min}/(|\mu|_{\max} + z_{t/2})$ and $\mathrm{sign}(\sigma_{ij}) = \mathrm{sign}(\mu_i \mu_j)$, where $|\mu|_{\min} = \min(|\mu_i|, |\mu_j|)$ and $|\mu|_{\max} = \max(|\mu_i|, |\mu_j|)$.
\end{itemize}
\end{theorem}

These results immediately result in the theorem blow, which gives one set of sufficient conditions that lead to the inflation of the FDP variance; it is Theorem 3 in the main article. 

\begin{theorem} \label{Thm-3}
Suppose $(Z_{1},\ldots,Z_{p})^{T} \sim N((\mu_{1},\ldots,\mu_{p})^{T},\bSigma)$ with $\sigma_{jj}=1\text{ for } j=1,\ldots,p$. For any $i,j\in \mathcal{H}_1, i\neq j$, assume that $|\mu_i| \ge 2|z_{t/2}| > 2\sqrt{\log(3)/2}$,  $|\sigma_{ij}| \le |\mu|_{\min}/(|\mu|_{\max} + z_{t/2})$, and $\mathrm{sign}(\sigma_{ij}) = \mathrm{sign}(\mu_i \mu_j)$, where $|\mu|_{\min} = \min(|\mu_i|, |\mu_j|)$ and $|\mu|_{\max} = \max(|\mu_i|, |\mu_j|)$. Then, $V_2(t)>0$.
\end{theorem}

\section{Lemmas} \label{sec-lemmas}

We first establish the following lemmas; they play fundamental roles in our proofs of the theoretical results given above. In particular, Lemmas \ref{lem-3}--\ref{lem-7} are to support our proof for Theorem \ref{Thm-1}; Lemma \ref{lem-1} is used for Theorem \ref{Thm-2}. 


\begin{lemma}\label{lem-3}
The partial derivatives $f_{t_1, t_2, t_3}^{i_1, i_2, i_3}(a,t)$ can be written to be
$$ f_{t_1, t_2, t_3}^{i_1, i_2, i_3}(a,t) = C^{a}_{t/2}(i_1 + i_2 + i_3)g_{i_1}(x_{t_1})g_{i_2}(x_{t_2})g_{i_3}(x_{t_3}),$$
where
\begin{eqnarray*}
&g_0(x) = 1, \qquad g_1(x) = x, \qquad g_2(x) = x^2 - 1, \qquad g_3(x) = x^3-3x,&\\ & g_4(x) = x^4 - 6x^2 + 3, \qquad  g_5(x) =  x^5 - 10x^3 + 15x& \\
&g_6(x) =  x^6 - 15x^4 + 45x^2 - 15, \qquad  g_7(x) =  x^7 - 21x^5 + 105x^3 - 105x&\\
&g_8(x) =  x^8 - 28x^6 + 210x^4 -420x^2 + 105,&
\end{eqnarray*}
$C^{a}_{t/2}(0) = \Phi(z_{t/2} - \mu_a)$, and $C^{a}_{t/2}(i) = -\phi(z_{t/2}-\mu_a) g_{i-1}(z_{t/2} - \mu_a)$ for $i\geq 1$. Furthermore, for $X\sim N(0,1)$,
\begin{eqnarray*}
E\{g_i(X)\} = 0, && \mbox{ for } i=1,\ldots, 8,\\
\mbox{and } \quad E\{ g_{i_1}(X) g_{i_2}(X)\} = 0, && \mbox{ for  any }i_1, i_2\in \{0, \ldots, 8 \} , i_1 \neq i_2;
\end{eqnarray*}
and for $\mu_a = 0$,
\begin{eqnarray}
C^{a}_{1-t/2}(i) = \begin{cases}
C^{a}_{t/2}(i), &\text{ if i is odd}\\
-C^{a}_{t/2}(i), &\text{ if i is even}.\\ 
\end{cases} \label{eq-lem3-0}
\end{eqnarray}
\end{lemma}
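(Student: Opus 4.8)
The plan is to reduce the multivariate $\rho$-derivative to a two-variable calculus problem and then exploit a heat-equation identity. First I would observe that $f_{\{t_1,\dots,t_k\}}(\rho_1,\dots,\rho_k;a,t)$ depends on $(\rho_1,\dots,\rho_k)$ only through the two quantities $P=\sum_j \rho_j x_{t_j}$ and $Q=\sum_j \rho_j^2$; writing $w=z_{t/2}-\mu_a$, we have $f=\Phi\bigl((w-P)/\sqrt{1-Q}\bigr)=:F(P,Q)$. Since $\rho\mapsto P$ and $\rho\mapsto Q$ are polynomial and $F$ is real-analytic near the origin, I would Taylor-expand $F$ in $(P,Q)$ about $(0,0)$, substitute $P=\sum_j\rho_j x_{t_j}$ and $Q=\sum_j\rho_j^2$, and read off the coefficient of $\rho_1^{i_1}\cdots\rho_k^{i_k}$ (the wanted derivative is that coefficient times $\prod_j i_j!$). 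Specializing to $k=3$ at the end yields the stated formula.

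The key step, and the main obstacle, is to recognize the $(P,Q)$-dependence above and to establish the identity $\partial_Q F=-\tfrac12\,\partial_P^2 F$; without it one is stuck with a messy iterated chain rule. I would verify it directly: with $u=(w-P)/\sqrt{1-Q}$ one gets $\partial_P F=-\phi(u)/\sqrt{1-Q}$, $\partial_P^2 F=-u\phi(u)/(1-Q)$, and $\partial_Q F=\tfrac12 u\phi(u)/(1-Q)$, so indeed $\partial_Q F=-\tfrac12\,\partial_P^2 F$. Iterating gives $\partial_Q^n F=(-\tfrac12)^n\partial_P^{2n}F$, hence every mixed derivative at the origin collapses to a pure $P$-derivative, $\partial_P^m\partial_Q^n F(0,0)=(-\tfrac12)^n\partial_P^{m+2n}F(0,0)$. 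I would then evaluate the pure $P$-derivative from $F(P,0)=\Phi(w-P)$: using $\phi^{(j)}(x)=(-1)^j g_j(x)\phi(x)$ (the defining property of the $g_j$ as probabilists' Hermite polynomials), one obtains $\partial_P^I F(0,0)=-\phi(w)g_{I-1}(w)=C^{a}_{t/2}(I)$ for $I\ge1$ and $F(0,0)=\Phi(w)=C^{a}_{t/2}(0)$. Crucially this value depends on the multi-index only through $I=m+2n$, which is the source of the claimed dependence on $i_1+i_2+i_3$ alone.

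It then remains to assemble the product of the $g$'s, where I would avoid explicit multinomial bookkeeping by using generating functions. The two steps above give $F(P,Q)=\sum_{m,n}\frac{P^m}{m!}\frac{(-Q/2)^n}{n!}C^{a}_{t/2}(m+2n)$, whose homogeneous part of $\rho$-degree $I$ (assigning $P$ weight $1$ and $Q$ weight $2$) carries the factor $C^{a}_{t/2}(I)$ times the degree-$I$ part of $e^{P-Q/2}$. On the other hand, the single-variable Hermite generating function $\sum_i g_i(x)\rho^i/i!=e^{\rho x-\rho^2/2}$ multiplies to $\prod_j e^{\rho_j x_{t_j}-\rho_j^2/2}=e^{P-Q/2}$, whose degree-$I$ part is $\sum_{\sum_j i_j=I}\prod_j g_{i_j}(x_{t_j})\rho_j^{i_j}/i_j!$. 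Matching the degree-$I$ parts of the identical series $e^{P-Q/2}$ and extracting the coefficient of $\rho_1^{i_1}\cdots\rho_k^{i_k}$ from $F=\sum_{\{i_j\}}\bigl(\prod_j\rho_j^{i_j}/i_j!\bigr)f^{(i_1,\dots,i_k)}$ gives $f^{(i_1,\dots,i_k)}=C^{a}_{t/2}(\sum_j i_j)\prod_j g_{i_j}(x_{t_j})$, as claimed.

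For the remaining assertions I would invoke standard Hermite facts: since the $g_i$ are the probabilists' Hermite polynomials, $E\{g_{i_1}(X)g_{i_2}(X)\}=i_1!\,\delta_{i_1 i_2}$ for $X\sim N(0,1)$, which yields both $E\{g_i(X)\}=0$ for $i\ge1$ (take $i_2=0$) and the stated orthogonality. For \eqref{eq-lem3-0} with $\mu_a=0$ and $i\ge1$, I would use $z_{1-t/2}=-z_{t/2}$ together with the parity $g_{i-1}(-x)=(-1)^{i-1}g_{i-1}(x)$, so that $C^{a}_{1-t/2}(i)=-\phi(z_{t/2})g_{i-1}(-z_{t/2})=(-1)^{i-1}C^{a}_{t/2}(i)$, which equals $C^{a}_{t/2}(i)$ for odd $i$ and $-C^{a}_{t/2}(i)$ for even $i$.
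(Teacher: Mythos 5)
Your proof is correct, and it is genuinely different from what the paper does: the paper's ``proof'' of this lemma is a one-line remark that the result follows from ``straightforward but tedious evaluations'' of the partial derivatives, with all details omitted. Your route replaces that brute-force differentiation with a structural argument: the reduction of $f$ to $F(P,Q)=\Phi\bigl((w-P)/\sqrt{1-Q}\bigr)$ with $P=\sum_j\rho_jx_{t_j}$, $Q=\sum_j\rho_j^2$, the backward-heat identity $\partial_QF=-\tfrac12\partial_P^2F$ (which I checked and which is right), and the matching of the weight-graded pieces of $e^{P-Q/2}$ against the product of probabilists' Hermite generating functions. This buys two things the paper's approach does not: it explains \emph{why} the mixed derivative depends on the multi-index only through $i_1+i_2+i_3$ (every $Q$-derivative collapses to two $P$-derivatives), and it proves the formula uniformly in $k$ and in the order of differentiation rather than requiring a separate order-by-order computation up to order $7$--$8$. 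The remaining claims (orthogonality $E\{g_{i_1}(X)g_{i_2}(X)\}=i_1!\,\delta_{i_1i_2}$, hence $E\{g_i(X)\}=0$ for $i\ge1$, and the parity relation $C^a_{1-t/2}(i)=(-1)^{i-1}C^a_{t/2}(i)$ at $\mu_a=0$ via $z_{1-t/2}=-z_{t/2}$ and $g_{i-1}(-x)=(-1)^{i-1}g_{i-1}(x)$) are standard Hermite facts correctly invoked, so nothing is missing; the only implicit step worth a sentence in a written-up version is that substituting the polynomials $P(\rho),Q(\rho)$ into the Taylor expansion of the smooth function $F$ legitimately computes the $\rho$-derivatives at the origin, which is the usual formal composition of Taylor polynomials.
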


\begin{proof}
The proof is based on straightforward but tedious evaluations on the partial derivatives of $f_{\{t_1, \ldots, t_k\}}(\rho_1, \ldots, \rho_k; a,t)$ with respect to $\rho_1,\ldots, \rho_k$; the details are omitted. 
\end{proof}

\begin{lemma}\label{lem-4}
Recall the definition:  $t_i = \mathrm{1}(|Z_i| > |z_{t/2}|) = \mathrm{1}(P_i < t)$; we have
\begin{itemize}
    \item[(P1). ] when $i\neq j$, and $i,j\in \mathcal{H}_1$,  \begin{eqnarray*}
    &&E(t_it_j) - E(t_i) E(t_j) \\&=& \left\{C^i_{t/2}(1)-C^i_{1-t/2}(1)\right\}\left\{C^j_{t/2}(1)-C^j_{1-t/2}(1)\right\}\sigma_{ij}\\
     &&+ \frac{1}{2}\left\{C^i_{t/2}(2)-C^i_{1-t/2}(2)\right\}\left\{C^j_{t/2}(2)-C^j_{1-t/2}(2)\right\}\sigma_{ij}^2\\
     &&+\frac{E\{g_3^2(X_1)\}}{(3!)^2} \left\{C_{t/2}^i(3) -  C_{1-t/2}^i(3)\right\}\left\{C_{t/2}^j(3) -  C_{1-t/2}^j(3)\right\}\sigma_{ij}^3+O(\sigma_{ij}^4);
    \end{eqnarray*}
    
    \item[(P2).] when $i\in \mathcal{H}_1, j\in \mathcal{H}_0$, for any $t$, there exists a unique root $\mu_t \in (|z_{t/2}|, |z_{t/2}|+1)$ of $C_{t/2}^j(2) - C_{1-t/2}^j(2) = 0$ in $\mu_j$, such that when $\mu_j \leq \mu_t$,
    \begin{eqnarray*}
    && O(\sigma_{ij}^4)< E(t_it_j) - E(t_i) E(t_j) = -\sigma_{ij}^2 \phi(z_{t/2})z_{t/2} \left\{ C_{t/2}^j(2) - C_{1-t/2}^j(2) \right\} + O(\sigma_{ij}^4) \\
    &\leq & \phi(z_{t/2})|z_{t/2}| C_t^{\max} \sigma_{ij}^2 + O(\sigma_{ij}^4),
    \end{eqnarray*}
   where 
   \begin{eqnarray*}
    C_t^{\max} = \sup_{\mu\in (-\mu_t, \mu_t)} \left\{\phi(|z_{t/2}|+ \mu)(|z_{t/2}| + \mu) + \phi(|z_{t/2}| - \mu)(|z_{t/2}| - \mu)\right\};
    \end{eqnarray*} 
    and when  $|\mu_j|\geq \mu_t$,
    \begin{eqnarray*}
    E(t_it_j) - E(t_i) E(t_j) < O(\sigma_{ij}^4).
    \end{eqnarray*}
    
    \item[(P3).] when $i\neq j$, and $i,j\in \mathcal{H}_0$, 
    \begin{eqnarray*}
    E(t_it_j) - E(t_i) E(t_j) = 2 \phi^2(z_{t/2})z_{t/2}^2 \sigma_{ij}^2 + O(\sigma_{ij}^4).
    \end{eqnarray*}
\end{itemize}
\end{lemma}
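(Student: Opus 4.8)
The plan is to reduce all three parts to a single \emph{master expansion} of $\Cov(t_i,t_j)$ in powers of the correlation $\sigma_{ij}$ and then read off P1--P3 by specializing the means. Since the three statements differ only in which of $\mu_i,\mu_j$ vanish, I would not treat them separately but instead establish, for an arbitrary pair $(i,j)$ with $i\neq j$,
\begin{equation*}
\Cov(t_i,t_j)=\sum_{k\ge 1}\frac{\sigma_{ij}^{k}}{k!}\bigl\{C^i_{t/2}(k)-C^i_{1-t/2}(k)\bigr\}\bigl\{C^j_{t/2}(k)-C^j_{1-t/2}(k)\bigr\},
\end{equation*}
and then control its tail; P1--P3 follow by retaining the first few terms.

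To derive the master expansion I would condition on $Z_j$. Writing $X_j=Z_j-\mu_j\sim N(0,1)$ and $\rho=\sigma_{ij}$, the conditional law of $Z_i$ is $N(\mu_i+\rho X_j,\,1-\rho^{2})$, so that
\begin{equation*}
P(t_i=1\mid X_j=x)=\Phi\!\Bigl(\tfrac{z_{t/2}-\mu_i-\rho x}{\sqrt{1-\rho^{2}}}\Bigr)+1-\Phi\!\Bigl(\tfrac{z_{1-t/2}-\mu_i-\rho x}{\sqrt{1-\rho^{2}}}\Bigr)=f_{\{t_1\}}(\rho;i,t)+1-f_{\{t_1\}}(\rho;i,1-t),
\end{equation*}
evaluated at $x_{t_1}=x$. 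Taylor expanding each $f$ in $\rho$ about $0$ and invoking the single-variable case of Lemma~\ref{lem-3} ($i_2=i_3=0$, so the $k$th derivative is $C^i_{t/2}(k)\,g_k(x)$, respectively $C^i_{1-t/2}(k)\,g_k(x)$), the $\rho^{0}$ term collapses to $\Phi(z_{t/2}+\mu_i)+\Phi(z_{t/2}-\mu_i)=E(t_i)$, while the $k\ge1$ terms carry the factor $\{C^i_{t/2}(k)-C^i_{1-t/2}(k)\}g_k(x)$. Multiplying by $t_j$ and integrating over $X_j$, I would compute $E\{t_j\,g_k(X_j)\}$ from the Hermite identity $\frac{d}{dx}\{g_{k-1}(x)\phi(x)\}=-g_k(x)\phi(x)$, which gives $E\{t_j g_k(X_j)\}=C^j_{t/2}(k)-C^j_{1-t/2}(k)$ for $k\ge1$; subtracting $E(t_i)E(t_j)$ yields the master expansion. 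Equivalently this is Mehler's expansion of the standard bivariate normal density, $\phi(x,y;\rho)=\phi(x)\phi(y)\sum_{k\ge0}\tfrac{\rho^{k}}{k!}g_k(x)g_k(y)$, integrated against the bounded indicators, which also justifies the term-by-term integration.

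With the master formula in hand the three parts are short. For P1 ($i,j\in\calH_1$) I keep $k=1,2,3$; since $E\{g_3^2(X_1)\}=3!$, one has $1/3!=E\{g_3^2(X_1)\}/(3!)^2$, so the stated three-term form appears verbatim. For P3 ($\mu_i=\mu_j=0$) the parity relation \eqref{eq-lem3-0} forces $C_{t/2}(k)-C_{1-t/2}(k)=0$ for odd $k$ and $=2C_{t/2}(k)$ for even $k$; the leading $k=2$ term is $\tfrac12\{2C^i_{t/2}(2)\}\{2C^j_{t/2}(2)\}\sigma_{ij}^2=2\phi^2(z_{t/2})z_{t/2}^2\sigma_{ij}^2$ because $C^{0}_{t/2}(2)=-\phi(z_{t/2})z_{t/2}$. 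For P2 ($i\in\calH_1$, $j\in\calH_0$) the parity relation again annihilates every odd $k$ through the null factor, leaving the $k=2$ term $-\sigma_{ij}^2\phi(z_{t/2})z_{t/2}\{C_{t/2}(2)-C_{1-t/2}(2)\}$ with the bracket evaluated at the alternative mean. A direct simplification identifies $C_{t/2}(2)-C_{1-t/2}(2)$ with the function $H(\mu)=\phi(|z_{t/2}|+\mu)(|z_{t/2}|+\mu)+\phi(|z_{t/2}|-\mu)(|z_{t/2}|-\mu)$ of the notation section; its unique root $\mu_t\in(|z_{t/2}|,|z_{t/2}|+1)$ and the constant $C_t^{\max}=\sup_{\mu\in(-\mu_t,\mu_t)}H(\mu)$ then give the sign change at $\mu_t$ (positive for $|\mu|<\mu_t$, negative for $|\mu|>\mu_t$) and the upper bound $\phi(z_{t/2})|z_{t/2}|C_t^{\max}\sigma_{ij}^2$.

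The remaining and most delicate point is the uniform $O(\sigma_{ij}^4)$ control of the truncation error. I would obtain it from Parseval's identity: since $t_\ell$ is bounded, $\sum_{k\ge1}\{C^\ell_{t/2}(k)-C^\ell_{1-t/2}(k)\}^2/k!=\Var(t_\ell)\le 1/4$, so by Cauchy--Schwarz and $|\sigma_{ij}|\le1$ the sum of all terms with $k\ge4$ is bounded by $\tfrac14\sigma_{ij}^4$, uniformly in the means (and in the even-only cases the first omitted term is already $k=4$). I expect this uniform remainder bound, together with the sign analysis of $H$ near its root $\mu_t$ in P2, to be the main obstacles; the algebra identifying the low-order coefficients with the claimed explicit constants is routine once Lemma~\ref{lem-3} and \eqref{eq-lem3-0} are in place.
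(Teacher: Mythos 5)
Your route is genuinely different from the paper's and, for the expansion itself, it is sound and arguably cleaner. The paper first splits $t_i$ into the four half-line indicators $1(Z_i<a_1z_{t/2})$, writes $Z_i,Z_j$ with an explicit common Gaussian factor $\sqrt{|\sigma_{ij}|}X_1$, Taylor-expands each $\Phi$ in $\sqrt{\sigma_{ij}}$ with a Lagrange remainder, and then recombines the four pieces, using the orthogonality in Lemma~\ref{lem-3} to kill the cross terms; this yields exactly your master formula truncated at $k=3$ with an $O(\sigma_{ij}^4)$ remainder controlled by polynomial moments. Your Mehler/Hermite version reaches the same series $\sum_{k\ge1}\frac{\sigma_{ij}^k}{k!}\{C^i_{t/2}(k)-C^i_{1-t/2}(k)\}\{C^j_{t/2}(k)-C^j_{1-t/2}(k)\}$ in one step (note $E\{g_k^2(X)\}=k!$ reconciles your coefficient with the paper's $E\{g_{i_1}^2\}/(i_1!)^2$), handles both signs of $\sigma_{ij}$ at once, and your Parseval--Cauchy--Schwarz tail bound $\sum_{k\ge4}\le\frac14\sigma_{ij}^4$ is uniform in the means, which is actually sharper than the paper's remainder bookkeeping. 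The identity $E\{t_jg_k(X_j)\}=C^j_{t/2}(k)-C^j_{1-t/2}(k)$ and the readings-off of P1 and P3 via the parity relation \eqref{eq-lem3-0} are all correct.

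The one genuine gap is in P2. The lemma itself asserts the existence and uniqueness of the root $\mu_t\in(|z_{t/2}|,|z_{t/2}|+1)$ of $C_{t/2}(2)-C_{1-t/2}(2)=H(\mu)$ and the resulting sign pattern (positive on $(-\mu_t,\mu_t)$, negative outside), which is what delivers the strict inequalities $O(\sigma_{ij}^4)<\Cov(t_i,t_j)$ versus $\Cov(t_i,t_j)<O(\sigma_{ij}^4)$ and the bound by $C_t^{\max}$. You identify $H$ correctly but only assert its root and sign change, flagging the analysis as an ``obstacle''; you cannot import it from the notation section, since that section states it without proof and the claim is part of what P2 asserts. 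The paper proves it by a three-case argument: $H>0$ on $[-|z_{t/2}|,|z_{t/2}|]$ because both summands are positive there; $H<0$ for $|\mu|\ge|z_{t/2}|+1$ because $u\mapsto u\phi(u)$ is strictly decreasing for $u>1$; and $H$ is strictly decreasing on $(|z_{t/2}|,|z_{t/2}|+1)$ by a direct derivative computation. That elementary calculus needs to be supplied for your proof of P2 to be complete; everything else in your proposal goes through.
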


\begin{proof}
Note that we can write 
\begin{eqnarray*}
E(t_it_j) - E(t_i) E(t_j) = \sum_{a_1, a_2 \in \{1, -1\}}a_1a_2 h_{i,j}(a_1, a_2), \label{eq-lem4-1}
\end{eqnarray*}
where
\begin{eqnarray*}
h_{i,j}(a_1, a_2) = P(Z_i<a_1 z_{t/2}, Z_j < a_2 z_{t/2}) - P(Z_i<a_1 z_{t/2})P(Z_j<a_1 z_{t/2}). \label{eq-lem4-2}
\end{eqnarray*}
We derive $h_{i,j}(a_1, a_2)$ for $a_1 = a_2 = 1$; the derivations for other values of $a_1$ and $a_2$ are similar. Let $X_1, X_2, X_3$ be i.i.d. standard normal random variables, we can write 
\begin{eqnarray*}Z_i = \mu_i + \sqrt{|\sigma_{ij}|} X_1 + \sqrt{1-|\sigma_{ij}|}X_2 &&\\
Z_j = \mu_j + \sqrt{|\sigma_{ij}|} X_1 + \sqrt{1-|\sigma_{ij}|}X_3 &&\mbox{if } \sigma_{ij}\geq 0,
\end{eqnarray*}
and 
\begin{eqnarray*}
Z_j = \mu_j - \sqrt{|\sigma_{ij}|} X_1 + \sqrt{1-|\sigma_{ij}|}X_3 \quad \mbox{if } \sigma_{ij}<0.
\end{eqnarray*} 
Hereafer, we assume that $\sigma_{ij}\geq0$; the case for $\sigma_{ij}<0$ can be derived the same. 
The first term of $h_{i,j}(a_1, a_2)$ is given by 
\begin{equation}
    \begin{split}
        & P(Z_i < z_{t/2}, Z_{j} < z_{t/2}) \\
        = & \int \Phi \left(\dfrac{z_{t/2} - \mu_i - \sqrt{\sigma_{ij}}x_{1}}{\sqrt{1 - \sigma_{ij}}}\right)\Phi \left(\dfrac{z_{t/2} - \mu_j - \sqrt{\sigma_{ij}}x_{1}}{\sqrt{1 - \sigma_{ij}}}\right)\phi(x_1)dx_1.\\
    \end{split} \label{eq-lem4-3}
\end{equation}
Note that by definition $\Phi \left(\dfrac{z_{t/2} - \mu_a - \sqrt{\sigma_{ij}}x_{1}}{\sqrt{1 - \sigma_{ij}}}\right) = f_{\{1\}}(\sqrt{\sigma_{ij}}; a, t)$; applying Taylor expansion and Lemma \ref{lem-3}, we immediately have
\begin{eqnarray}
\Phi \left(\dfrac{z_{t/2} - \mu_a - \sqrt{\sigma_{ij}}x_{1}}{\sqrt{1 - \sigma_{ij}}}\right) = \sum_{i_1 \leq 7} C^a_{t/2}(i_1)g_{i_1}(x_1)(\sqrt{\sigma_{ij}})^{i_1}/(i_1!) + R(\sigma_{ij}), \label{eq-lem4-4}
\end{eqnarray}
where $R(\sigma_{ij})$ is the Lagrange residual term in the Taylor's expansion, satisfying $|R(\sigma_{ij})| \lesssim |f(x_1)| \sigma_{ij}^4$ up to a univeral constant not depending on $x_1$, with $f(x_1)$ being a finite order polynomial function of $x_1$. Combining \eqref{eq-lem4-3} and \eqref{eq-lem4-4} leads to 
\begin{equation}
    \begin{split}
        & P(Z_i < z_{t/2}, Z_{j} < z_{t/2}) \\
        = &\sum_{i_1 + i_2 \leq 7} C^i_{t/2}(i_1)C^j_{t/2}(i_2)(\sqrt{\sigma_{ij}})^{i_1 + i_2}E(g_{i_1}(X_1)g_{i_2}(X_1))/(i_1!i_2!) + O(\sigma_{ij}^4) \\
        = &\sum_{i_1 = 0}^3 C^i_{t/2}(i_1)C^j_{t/2}(i_1)\sigma_{ij}^{i_1}E(g_{i_1}^2(X_1))/(i_1!)^2 + O(\sigma_{ij}^4), \label{eq-lem4-5}\\
    \end{split}
\end{equation}
where the second ``=" is because $E(g_{i_1}(X_1)g_{i_2}(X_1)) = 0$ for $i_1\neq i_2$ based on Lemma \ref{lem-3}; $X_1$ is a standard normal random variable. Furthermore, it is straightforward to check that for $i_1 = 0$,
\begin{eqnarray*}
C^i_{t/2}(i_1)C^j_{t/2}(i_1)\sigma_{ij}^{i_1}E(g_{i_1}^2(X_1))/(i_1!)^2 = P(Z_i < z_{t/2})P(Z_{j} < z_{t/2}),
\end{eqnarray*}
which together with \eqref{eq-lem4-5} leads to
\begin{eqnarray*}
h_{i,j}(1,1) &=& P(Z_i< z_{t/2}, Z_j <  z_{t/2}) - P(Z_i< z_{t/2})P(Z_j< z_{t/2})\\
&=& \sum_{i_1 = 1}^3 C^i_{t/2}(i_1)C^j_{t/2}(i_1)\sigma_{ij}^{i_1}E(g_{i_1}^2(X_1))/(i_1!)^2 + O(\sigma_{ij}^4). 
\end{eqnarray*}
Similarly, we can derive $h_{i,j}(1,-1)$, $h_{i,j}(-1,1)$, and $h_{i,j}(-1,-1)$. As consequence, we have
\begin{equation*}
    \begin{split}
        &E(t_it_j) - E(t_i) E(t_j)= \sum_{a_1, a_2 \in \{1, -1\}}a_1a_2 h_{i,j}(a_1, a_2)\\
        = & \sum_{i_1 = 1}^3 C^i_{t/2}(i_1)C^j_{t/2}(i_1)\sigma_{ij}^{i_1}E\{g_{i_1}^2(X_1)\}/(i_1!)^2\\
        - & \sum_{i_1 = 1}^3 C^i_{1-t/2}(i_1)C^j_{t/2}(i_1)\sigma_{ij}^{i_1}E\{g_{i_1}^2(X_1)\}/(i_1!)^2\\
        - &\sum_{i_1 = 1}^3 C^i_{t/2}(i_1)C^j_{1-t/2}(i_1)\sigma_{ij}^{i_1}E\{g_{i_1}^2(X_1)\}/(i_1!)^2\\
        + &\sum_{i_1 = 1}^3 C^i_{1-t/2}(i_1)C^j_{1-t/2}(i_1)\sigma_{ij}^{i_1}E\{g_{i_1}^2(X_1)\}/(i_1!)^2 + O(\sigma_{ij}^4)\\
        = & \sum_{i_1 = 1}^3 \frac{\sigma_{ij}^{i_1} E\{g_{i_1}^2(X_1)\}}{(i_1!)^2} \left\{C^i_{t/2}(i_1)-C^i_{1-t/2}(i_1)\right\}\left\{C^j_{t/2}(i_1)-C^j_{1-t/2}(i_1)\right\}+O(\sigma_{ij}^4).
\end{split} 
\end{equation*}
We are able to check:
\begin{itemize}

   \item when $\mu_i = 0$, for $i_1 = 1$ or $3$, 
   \begin{eqnarray*}
    C^i_{t/2}(i_1)-C^i_{1-t/2}(i_1) = 0,
    \end{eqnarray*}
    and for $i_1 = 2$ 
     \begin{eqnarray*}
    C^i_{t/2}(i_1)-C^i_{1-t/2}(i_1) = -2 \phi(z_{t/2}) z_{t/2};
    \end{eqnarray*}
    
    \item when $\mu_i \neq 0$, for $i_1 = 1$,
    \begin{eqnarray*}
    C^i_{t/2}(i_1)-C^i_{1-t/2}(i_1) = -\phi(|z_{t/2}|+\mu_i) + \phi(|z_{t/2}|-\mu_i);
    \end{eqnarray*}
    for $i_1 = 2$,
    \begin{eqnarray*}
    C^i_{t/2}(2)-C^i_{1-t/2}(2) &=& -\phi(z_{t/2}- \mu_i)(z_{t/2} - \mu_i) - \phi(z_{t/2} + \mu_i)(z_{t/2} + \mu_i) \\
    &=& \phi(|z_{t/2}|+ \mu_i)(|z_{t/2}| + \mu_i) + \phi(|z_{t/2}| - \mu_i)(|z_{t/2}| - \mu_i).
    \end{eqnarray*}
    We can verify that for any given $t\in (0,1)$, there exists a unique $\mu_t \in (z_{t/2}, z_{t/2}+1)$, which is the root of $C^i_{t/2}(2)-C^i_{1-t/2}(2)=0$ for $\mu_i \in (|z_{t/2}|, |z_{t/2}|+1)$, and $C^i_{t/2}(2)-C^i_{1-t/2}(2) >0 $ if and only if $\mu_i \in (-\mu_t, \mu_t)$. This follows from the following facts and that $\phi(|z_{t/2}|+ \mu_i)(|z_{t/2}| + \mu_i) + \phi(|z_{t/2}| - \mu_i)(|z_{t/2}| - \mu_i)$ as a function of $\mu_i$ is symmetric about $0$.
    \begin{itemize}
        \item[(1)] When $\mu_i \in [-|z_{t/2}|, |z_{t/2}|]$, we have both $|z_{t/2}| + \mu_i>0$ and $|z_{t/2}| - \mu_i>0$; therefore $C^i_{t/2}(2)-C^i_{1-t/2}(2)>0$.
        \item[(2)] When $\mu_i \geq |z_{t/2}|+1$, we have
        \begin{eqnarray*}
        C^i_{t/2}(2)-C^i_{1-t/2}(2) = \phi(u_1)(u_1) - \phi(u_2)(u_2) < 0,
        \end{eqnarray*}
        where $u_1 = \mu_i + |z_{t/2}|$, $u_2 = \mu_i - |z_{t/2}|$, and $u_1 > u_2 > 1$; and we can verify $\phi(u)u$ is straightly decreasing when $u> 1$. 
        \item[(3)] When $\mu_i \in (|z_{t/2}|, |z_{t/2}|+1)$, we have
        \begin{eqnarray*}
        &&\frac{\partial \left\{ C^i_{t/2}(2)-C^i_{1-t/2}(2) \right\}}{\partial \mu_i} \\ 
        &=& \phi(|z_{t/2}|+\mu_i) \left\{ 1 - (|z_{t/2}|+\mu_i)^2 \right\}-\phi(|z_{t/2}|-\mu_i) \left\{ 1 - (|z_{t/2}|-\mu_i)^2 \right\}\\
        &\leq & \phi(|z_{t/2}|+\mu_i)\left\{ (|z_{t/2}|-\mu_i)^2 -  (|z_{t/2}|+\mu_i)^2\right\}\\
        &=& -4\phi(|z_{t/2}|+\mu_i)|z_{t/2}|\mu_i <0,
        \end{eqnarray*}
        where ``$\leq$" is based on the facts that when $\mu_i \in (|z_{t/2}|, |z_{t/2}|+1)$, $1 - (|z_{t/2}-\mu_i)^2>0$ and $\phi(|z_{t/2}|+\mu_i)< \phi(|z_{t/2}|-\mu_i)$. 
    \end{itemize}
    Furthermore, set 
    \begin{eqnarray*}
    C_t^{\max} = \sup_{\mu\in (-\mu_t, \mu_t)} \left\{\phi(|z_{t/2}|+ \mu)(|z_{t/2}| + \mu) + \phi(|z_{t/2}| - \mu)(|z_{t/2}| - \mu)\right\},
    \end{eqnarray*}
    then we have $C^i_{t/2}(2)-C^i_{1-t/2}(2) \leq C_t^{\max}$.

    and for $i_1 = 3$,
    \begin{eqnarray*}
    &&C^i_{t/2}(3)-C^i_{1-t/2}(3) \\ &=& - \phi(z_{t/2}-\mu_i)\left\{ (z_{t/2}-\mu_i)^2 - 1\right\} + \phi(z_{t/2}+\mu_i)\left\{ (z_{t/2}+\mu_i)^2 -1 \right\}\\
    &=&  - \phi(|z_{t/2}|+\mu_i)\left\{ (|z_{t/2}|+\mu_i)^2 - 1\right\} + \phi(|z_{t/2}|-\mu_i)\left\{ (|z_{t/2}|-\mu_i)^2 -1 \right\}
    \end{eqnarray*}
    
    \end{itemize}
    
    We can conclude:
    
    \begin{itemize}

    \item when $\mu_i \neq 0$ and $\mu_j \neq 0$, 
    \begin{eqnarray*}
    &&E(t_it_j) - E(t_i) E(t_j) \\&=& \left\{C^i_{t/2}(1)-C^i_{1-t/2}(1)\right\}\left\{C^j_{t/2}(1)-C^j_{1-t/2}(1)\right\}\sigma_{ij}\\
     &&+ \frac{1}{2}\left\{C^i_{t/2}(2)-C^i_{1-t/2}(2)\right\}\left\{C^j_{t/2}(2)-C^j_{1-t/2}(2)\right\}\sigma_{ij}^2\\
     &&+\frac{E\{g_3^2(X_1)\}}{(3!)^2} \left\{C_{t/2}^i(3) -  C_{1-t/2}^i(3)\right\}\left\{C_{t/2}^j(3) -  C_{1-t/2}^j(3)\right\}\sigma_{ij}^3+O(\sigma_{ij}^4);
    \end{eqnarray*}
    which verifies (P1). 
    
    \item when $\mu_i \neq 0$ but $\mu_j = 0$, 
    \begin{eqnarray*}
    && E(t_it_j) - E(t_i) E(t_j) = -\sigma_{ij}^2 \phi(z_{t/2})z_{t/2} \left\{ C_{t/2}^j(2) - C_{1-t/2}^j(2) \right\} + O(\sigma_{ij}^4) \\
    &\leq & \phi(z_{t/2})|z_{t/2}| C_t^{\max} \sigma_{ij}^2 + O(\sigma_{ij}^4),
    \end{eqnarray*}
    when $\mu_i \in [-\mu_t, \mu_t]$, and 
    \begin{eqnarray*}
    E(t_it_j) - E(t_i) E(t_j)< O(\sigma_{ij}^4),
    \end{eqnarray*}
    otherwise. Therefore, (P2) is proved.

    \item when $\mu_i = 0$ and $\mu_j = 0$,
    \begin{eqnarray*}
    E(t_it_j) - E(t_i) E(t_j) = 2 \phi^2(z_{t/2})z_{t/2}^2 \sigma_{ij}^2 + O(\sigma_{ij}^4),
    \end{eqnarray*}
    which is our (P3) and we complete the proof of this lemma. 
\end{itemize}
\end{proof}

\begin{lemma}\label{lem-6}
Recall the definition:  $t_i = \mathrm{1}(|Z_i| > |z_{t/2}|) = \mathrm{1}(P_i < t)$; we have  
\begin{equation*}
    \begin{split}
        &\sum_{i\neq j\neq k \neq l} \Big\{E(t_i t_j t_k t_l) - E(t_i t_j t_k)E(t_l) - E(t_i t_j t_l)E(t_k) - E(t_i t_k t_l)E(t_j) - E(t_j t_k t_l)E(t_i)\\
        + &E(t_i t_j)E(t_k)E(t_l) + E(t_i t_k)E(t_j)E(t_l) + E(t_i t_l)E(t_j)E(t_k) + E(t_j t_k)E(t_i)E(t_l)\\
        + &E(t_j t_l)E(t_i)E(t_k) + E(t_k t_l)E(t_i)E(t_j) -3E(t_i)E(t_j)E(t_k)E(t_l)\Big\}\\
        = & 3\left[\sum_{i\neq j; i,j\in \mathcal{H}_1}\left\{C_{t/2}^i(1) -  C_{1-t/2}^i(1)\right\}\left\{C_{t/2}^j(1) -  C_{1-t/2}^j(1)\right\}\sigma_{ij}\right]^2 \\  &+ 3\left[\sum_{i\neq j; i,j\in \mathcal{H}_1}\left\{C_{t/2}^i(1) -  C_{1-t/2}^i(1)\right\}\left\{C_{t/2}^j(1) -  C_{1-t/2}^j(1)\right\}\sigma_{ij}\right]\\
    &\times \left[\sum_{i\neq j}\left\{C_{t/2}^i(2) -  C_{1-t/2}^i(2)\right\}\left\{C_{t/2}^j(2) -  C_{1-t/2}^j(2)\right\}\sigma_{ij}^2\right] \\
        & + O\left( \sum_{i\in \mathcal{H}_1}\left( \sum_{j\neq i, j\in\mathcal{H}_1} |\sigma_{ij}|\right)^3 \right) + O\left( \sum_{i\neq j} |\sigma_{ij}| \sum_{k\neq i, k\in \mathcal{H}_1} |\sigma_{ik}|\sum_{l\neq j, l\in \mathcal{H}_1}|\sigma_{jl}| \right)+ O\left( p^2 \sum_{i\neq j}\sigma_{ij}^4\right). \\
    \end{split}
\end{equation*}
\end{lemma}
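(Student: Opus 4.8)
The plan is to first recognize the bracketed summand as the fourth central moment of the indicators. Writing $\tilde t_a = t_a - E(t_a)$ for $a\in\{i,j,k,l\}$, a direct expansion of $E(\tilde t_i\tilde t_j\tilde t_k\tilde t_l)$ reproduces exactly the twelve terms inside the braces (the four triple-times-single terms, the six pair-times-single-times-single terms, and $-3E(t_i)E(t_j)E(t_k)E(t_l)$). Hence the left-hand side equals $\sum_{i\neq j\neq k\neq l} E(\tilde t_i\tilde t_j\tilde t_k\tilde t_l)$, a sum over four distinct indices. As in the proof of Lemma \ref{lem-4}, I would then decompose each $t_a$ into its two one-sided pieces, producing a $2^4=16$-fold sign expansion, and for each sign pattern introduce a Gaussian factor representation of $(Z_i,Z_j,Z_k,Z_l)$ with one common latent factor $X_{ab}$ per pair $(a,b)$ (six factors) plus four idiosyncratic terms, the coefficient of $X_{ab}$ chosen so that its contribution to $\Cov(Z_a,Z_b)$ equals $\sigma_{ab}$. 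Conditioning on the six factors makes $Z_i,\dots,Z_l$ independent, so the conditional joint tail probability factorizes; the conditional one-sided CDF of $Z_a$ depends precisely on the three factors it shares with the other three indices, and is exactly the three-argument function $f_{\{t_1,t_2,t_3\}}(\rho_1,\rho_2,\rho_3;a,t)$ with $\rho$'s equal to the relevant $\sqrt{|\sigma_{ab}|}$, whose derivatives Lemma \ref{lem-3} evaluates.

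Next I would Taylor-expand each of the four conditional CDFs in its factors and apply Lemma \ref{lem-3} to write each as $\sum C^a_{t/2}(\cdot)\,g_{i_1}(X_{ab})g_{i_2}(X_{ac})g_{i_3}(X_{ad})\,(\sqrt{|\sigma|})^{i_1+i_2+i_3}$ plus an $O(\sigma^4)$ remainder. Multiplying the four expansions and taking expectation over the six i.i.d.\ standard-normal factors, Hermite orthogonality $E\{g_m(X)g_n(X)\}=\delta_{mn}$ forces each factor $X_{ab}$ — which appears only in the $a$- and $b$-CDFs — to carry matching orders in both, while the centering removes the constant pieces so that every surviving index must be joined to at least one partner. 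The surviving terms are thus indexed by graphs on the vertices $\{i,j,k,l\}$ whose edges mark the factors used at order $\ge 1$. The dominant contribution comes from the three perfect matchings $(ij)(kl)$, $(ik)(jl)$, $(il)(jk)$: each disjoint edge at lowest Hermite order reproduces a pairwise covariance $\Cov(t_a,t_b)$, and by symmetry of the sum the three matchings collapse into $3\sum_{i\neq j\neq k\neq l}\Cov(t_i,t_j)\Cov(t_k,t_l)$, which equals $3\{\sum_{i\neq j}\Cov(t_i,t_j)\}^2$ up to overlap corrections created by forcing all four indices distinct.

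Substituting the Lemma \ref{lem-4} expansion $\Cov(t_i,t_j)=\{C^i_{t/2}(1)-C^i_{1-t/2}(1)\}\{C^j_{t/2}(1)-C^j_{1-t/2}(1)\}\sigma_{ij}+\tfrac12\{C^i_{t/2}(2)-C^i_{1-t/2}(2)\}\{C^j_{t/2}(2)-C^j_{1-t/2}(2)\}\sigma_{ij}^2+O(\sigma_{ij}^3)$ into $3\{\sum_{i\neq j}\Cov(t_i,t_j)\}^2$, and recalling that the first-order coefficient vanishes unless both $\mu_i,\mu_j\neq0$ (so the first-order sum is supported on $\mathcal{H}_1$), yields exactly the two main terms $3S_1^2+3S_1S_2$, where $S_1$ and $S_2$ denote the two bracketed sums on the right-hand side. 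The leftover cross term $\tfrac34 S_2^2=O\{(\sum_{i\neq j}\sigma_{ij}^2)^2\}$ is absorbed into $O(p^2\sum_{i\neq j}\sigma_{ij}^4)$ by AM--GM.

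The main obstacle is the accounting for everything that is \emph{not} the leading matching term, and showing it is captured by exactly the three stated remainders. This has three sources: (i) the $O(\sigma^4)$ Taylor remainders of Lemmas \ref{lem-3}--\ref{lem-4}, blown up by the $O(p^2)$-fold index sum, giving $O(p^2\sum_{i\neq j}\sigma_{ij}^4)$; (ii) the overlap corrections from replacing the distinct-index sum by a square; and (iii) the genuinely connected graph configurations (paths, stars, cycles, triangles-with-pendant), which are the fourth-cumulant contributions. The delicate point in (iii) is tracking the $\mathcal{H}_1$ restriction: a leaf edge carried at Hermite order $1$ contributes a first-order factor that vanishes off $\mathcal{H}_1$, so a star centered at one vertex with three order-one leaves is bounded by $O\{\sum_{i\in\mathcal{H}_1}(\sum_{j\neq i,j\in\mathcal{H}_1}|\sigma_{ij}|)^3\}$, while a path $k-i-j-l$ with order-one end edges (forcing $k,l\in\mathcal{H}_1$) and an arbitrary middle edge is bounded by $O\{\sum_{i\neq j}|\sigma_{ij}|\sum_{k\neq i,k\in\mathcal{H}_1}|\sigma_{ik}|\sum_{l\neq j,l\in\mathcal{H}_1}|\sigma_{jl}|\}$; the remaining connected patterns reduce to $O(p^2\sum_{i\neq j}\sigma_{ij}^4)$ via Cauchy--Schwarz. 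Carefully enumerating these configurations and verifying each bound — rather than the underlying Gaussian algebra — is where the real work lies.
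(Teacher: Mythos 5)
Your proposal is correct and follows essentially the same route as the paper's proof: the six-factor Gaussian representation, Taylor expansion of the conditional CDFs via Lemma \ref{lem-3}, Hermite orthogonality, and the classification of surviving terms by which pairs carry nonzero order, with the three perfect matchings yielding the main terms and the connected (star/path) configurations yielding the stated remainders. Your opening observation that the bracketed summand is the fourth joint central moment $E(\tilde t_i\tilde t_j\tilde t_k\tilde t_l)$, so that centering automatically kills every configuration with an isolated index, is a cleaner packaging of the paper's explicit term-by-term cancellations in cases (A1)--(A3), but the substance of the argument is the same.
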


\begin{proof}
We can write
\begin{equation}
    \begin{split}
        &E(t_i t_j t_k t_l) - E(t_i t_j t_k)E(t_l) - E(t_i t_j t_l)E(t_k) - E(t_i t_k t_l)E(t_j) - E(t_j t_k t_l)E(t_i)\\
        + &E(t_i t_j)E(t_k)E(t_l) + E(t_i t_k)E(t_j)E(t_l) + E(t_i t_l)E(t_j)E(t_k) + E(t_j t_k)E(t_i)E(t_l)\\
        + &E(t_j t_l)E(t_i)E(t_k) + E(t_k t_l)E(t_i)E(t_j) -3E(t_i)E(t_j)E(t_k)E(t_l)\\
        = &  \sum_{a_1, a_2, a_3, a_4 = \{1, -1\}}a_1 a_2 a_3 a_4 h_{i, j, k, l; t}(a_1, a_2, a_3, a_4),
    \end{split} \label{eq-lem6-1}
\end{equation}
where $h_{i, j, k, l}(a_1, a_2, a_3, a_4) = P(Z_i < a_1 z_{t/2}, Z_j < a_2 z_{t/2}, Z_k < a_3 z_{t/2}, Z_l < a_4 z_{t/2}) - P(Z_i < a_1 z_{t/2}, Z_j < a_2 z_{t/2}, Z_k < a_3 z_{t/2})P(Z_l < a_4 z_{t/2}) - P(Z_i < a_1 z_{t/2}, Z_j < a_2 z_{t/2}, Z_l < a_4 z_{t/2})P(Z_k < a_3 z_{t/2}) - P(Z_i < a_1 z_{t/2}, Z_k < a_3 z_{t/2}, Z_l < a_4 z_{t/2})P(Z_j < a_2 z_{t/2}) - P(Z_j < a_2 z_{t/2}, Z_k < a_3 z_{t/2}, Z_l < a_4 z_{t/2})P(Z_i < a_1 z_{t/2}) + P(Z_i < a_1 z_{t/2}, Z_j < a_2 z_{t/2})P(Z_k < a_3 z_{t/2})P(Z_l < a_4 z_{t/2}) + P(Z_i < a_1 z_{t/2}, Z_k < a_3 z_{t/2})P(Z_j < a_2 z_{t/2})P(Z_l < a_4 z_{t/2}) + P(Z_i < a_1 z_{t/2}, Z_l < a_4 z_{t/2})P(Z_j < a_2 z_{t/2})P(Z_k < a_3 z_{t/2}) + P(Z_j < a_2 z_{t/2}, Z_k < a_3 z_{t/2})P(Z_i < a_1 z_{t/2})P(Z_l < a_4 z_{t/2}) + P(Z_j < a_2 z_{t/2}, Z_l < a_4 z_{t/2})P(Z_i < a_1 z_{t/2})P(Z_k < a_3 z_{t/2}) + P(Z_k < a_3 z_{t/2}, Z_l < a_4 z_{t/2})P(Z_i < a_1 z_{t/2})P(Z_j < a_2 z_{t/2}) - 3P(Z_i < a_1 z_{t/2})P(Z_j < a_2 z_{t/2})P(Z_k < a_3 z_{t/2})P(Z_l < a_4 z_{t/2})$. 

Next, we consider the terms in $h_{i,j,k,l}(1,1,1,1)$; those for other values of $a_1,\ldots, a_4$ can be similarly derived. Without loss of generality, we assume $\sigma_{ij}\geq0$ for $i=1,\ldots, p; j=1, \ldots, p$. Note that $h_{i,j,k,l}(1,1,1,1)$  contains 16 terms with the last three terms identical, and its first term is given by
\begin{equation}
    \begin{split}
         & P(Z_i < z_{t/2}, Z_j < z_{t/2}, Z_k < z_{t/2}, Z_l < z_{t/2})\\
         = & \int \Phi \left(\dfrac{z_{t/2} - \mu_i - \sqrt{\sigma_{ij}}x_{1} - \sqrt{\sigma_{ik}}x_{2} - \sqrt{\sigma_{il}}x_{3}}{\sqrt{1 - \sigma_{ij} - \sigma_{ik} - \sigma_{il}}}\right)\Phi\left (\dfrac{z_{t/2} - \mu_j - \sqrt{\sigma_{ij}}x_{1} - \sqrt{\sigma_{jk}}x_{4} - \sqrt{\sigma_{jl}}x_{5}}{\sqrt{1 - \sigma_{ij} - \sigma_{jk} - \sigma_{jl}}}\right)\\
         &\Phi \left(\dfrac{z_{t/2} - \mu_k - \sqrt{\sigma_{ik}}x_{2} - \sqrt{\sigma_{jk}}x_{4} - \sqrt{\sigma_{kl}}x_{6}}{\sqrt{1 - \sigma_{ik} - \sigma_{jk} - \sigma_{kl}}}\right)\Phi\left (\dfrac{z_{t/2} - \mu_l - \sqrt{\sigma_{il}}x_{3} - \sqrt{\sigma_{jl}}x_{5} - \sqrt{\sigma_{kl}}x_{6}}{\sqrt{1 - \sigma_{il} - \sigma_{jl} - \sigma_{kl}}}\right)\\
         &\phi(x_1)\phi(x_2)\phi(x_3)\phi(x_4)\phi(x_5) \phi(x_6)dx_1 dx_2 dx_3 dx_4 dx_5 dx_6.\\
    \end{split} \label{eq-lem6-2}
\end{equation}
Applying Taylor expansion and Lemma \ref{lem-3}, we have
\begin{equation*}
    \begin{split}
        &\Phi \left(\dfrac{z_{t/2} - \mu_i - \sqrt{\sigma_{ij}}x_{1} - \sqrt{\sigma_{ik}}x_{2} - \sqrt{\sigma_{il}}x_{3}}{\sqrt{1 - \sigma_{ij} - \sigma_{ik} - \sigma_{il}}}\right)\\
        = & \sum_{i_1 + i_2 + i_3 \leq 7}C^{i}_{t/2}(i_1 + i_2 + i_3)g_{i_1}(x_1)g_{i_2}(x_2)g_{i_3}(x_3)(\sqrt{\sigma_{ij}})^{i_1}(\sqrt{\sigma_{ik}})^{i_2}(\sqrt{\sigma_{il}})^{i_3}/(i_1 + i_2 + i_3)! + R(\vec{\rho}),
    \end{split}
\end{equation*}
where $R(\vec{\rho})$ is the Lagrange residual term in the Taylor's expansion, and $|R(\vec{\rho})| \lesssim |f(x_1, x_2, x_3)|(|\sigma_{ij}|^4 + |\sigma_{ik}|^4 + |\sigma_{il}|^4)$ up to a universal constant not depending on $x_1, x_2, x_3$, where
$f(x_1, x_2, x_3)$ is a finite order polynomial function of $\{x_1, x_2, x_3\}$. Similarly, we can apply Taylor expansion to other $\Phi(\cdot)$ terms in \eqref{eq-lem6-2}; we obtain
\begin{equation}\label{eq-lem6-3}
    \begin{split}
         & P(Z_i < z_{t/2}, Z_j < z_{t/2}, Z_k < z_{t/2}, Z_l < z_{t/2})\\
         = &  \int \sum_{\sum_{j = 1}^{12} i_j \leq 7}C^{i}_{t/2}(i_1 + i_2 + i_3)C^{j}_{t/2}(i_7 + i_4 + i_5)C^{k}_{t/2}(i_8 + i_{10} + i_6)C^{l}_{t/2}(i_{9} + i_{11} + i_{12})\\
         & g_{i_1}(x_1) g_{i_7}(x_1)g_{i_2}(x_2)g_{i_8}(x_2)g_{i_3}(x_3)g_{i_{9}}(x_3)g_{i_{4}}(x_4)g_{i_{10}}(x_4)g_{i_{5}}(x_5)g_{i_{11}}(x_5)g_{i_{6}}(x_6)g_{i_{12}}(x_6)\\
        &(\sqrt{\sigma_{ij}})^{i_1 + i_7}(\sqrt{\sigma_{ik}})^{i_2 + i_8}(\sqrt{\sigma_{il}})^{i_3 + i_9}(\sqrt{\sigma_{jk}})^{i_4 + i_{10}}(\sqrt{\sigma_{jl}})^{i_5 + i_{11}}(\sqrt{\sigma_{kl}})^{i_6 + i_{12}}\\
        &/((i_1 + i_2 + i_3)!(i_7 + i_4 + i_5)!(i_8 + i_{10} + i_6)!(i_{9} + i_{11} + i_{12})!)\\
        &\phi(x_1)\phi(x_2)\phi(x_3)\phi(x_4)\phi(x_5)\phi(x_6)dx_1 dx_2 dx_3 dx_4 dx_5 dx_6\\
        &+ O\left(|\sigma_{ij}|^4 + |\sigma_{ik}|^4 +|\sigma_{il}|^4 + |\sigma_{jk}|^4 + |\sigma_{jl}|^4 + |\sigma_{kl}|^4\right).
    \end{split}
\end{equation}
Applying Lemma \ref{lem-3}, \eqref{eq-lem6-3} can be further simplified to be
\begin{eqnarray}
&&P(Z_i < z_{t/2}, Z_j < z_{t/2}, Z_k < z_{t/2}, Z_l < z_{t/2}) \nonumber \\
&=& \sum_{\sum_{j=1}^6 i_j \in \{0, 1, 2, 3\} } C^{i}_{t/2}(i_1 + i_2 + i_3)C^{j}_{t/2}(i_1 + i_4 + i_5)C^{k}_{t/2}(i_2 + i_4 + i_6)C^{l}_{t/2}(i_3 + i_5 + i_6) \nonumber \\
&& \hspace{0.4in} \times \frac{\prod_{j=1}^6 E\left\{g_{i_j}^2(X_1)\right\} \sigma_{ij}^{i_1} \sigma_{ik}^{i_2}\sigma_{il}^{i_3}\sigma_{jk}^{i_4}\sigma_{jl}^{i_5} \sigma_{kl}^{i_6}}{(i_1 + i_2 + i_3)!(i_1 + i_4 + i_5)!(i_2 + i_5 + i_6)!(i_3 + i_5 + i_6)!} \nonumber \\
&&+ O\left(|\sigma_{ij}|^4 + |\sigma_{ik}|^4 +|\sigma_{il}|^4 + |\sigma_{jk}|^4 + |\sigma_{jl}|^4 + |\sigma_{kl}|^4\right). \label{eq-lem6-4}
\end{eqnarray}
where $X_1$ is a standard normal random variable. Recall that \eqref{eq-lem6-4} is the first term of $h_{i,j,k,l}(1,1,1,1)$; we next discuss how each subitem under the summation of the main part on the right hand side of \eqref{eq-lem6-4} associates with other terms contributes in the evaluation of \eqref{eq-lem6-1}. In particular, we shall discuss the possible values of $i_1, \ldots, i_6$; clearly if one of them is zero, its corresponding ``$\sigma$" term in  \eqref{eq-lem6-4} does not appear in the expression. 

\begin{itemize}
    \item[(A1)] If the subscripts of the set of $\sigma$'s whose corresponding powers are nonzero contain and only contain three letters, and therefore one letter does not appear in these subscripts, the subitem in \eqref{eq-lem6-4} is identical to the corresponding subitem in one and only one of the second to fifth item in $h_{i,j,k,l}(1,1,1,1)$, whose coefficients are all ``$-$". As a consequence, they cancel each other. 
    
    For example $i_3 = i_5 = i_6 = 0$ but at least two of $\{i_1, i_2, i_4\}$ are nonzero, so that the set of $\sigma$ whose corresponding powers are nonzero is from $\{\sigma_{ij}, \sigma_{ik}, \sigma_{jk}\}$, and $\{i,j,k\}$ all appear in the subscript but $l$ does not. The subitem in \eqref{eq-lem6-4} is given by 
    \begin{eqnarray*}
    &&C^{i}_{t/2}(i_1 + i_2 )C^{j}_{t/2}(i_1 + i_4 )C^{k}_{t/2}(i_2 + i_4 ) C^{l}_{t/2}(0) \nonumber \\
&& \hspace{0.4in} \times \frac{E\left\{g_{i_1}^2(X_1)\right\}E\left\{g_{i_2}^2(X_1)\right\} E\left\{g_{i_4}^2(X_1)\right\} \sigma_{ij}^{i_1} \sigma_{ik}^{i_2}\sigma_{jk}^{i_4}}{(i_1 + i_2 )!(i_1 + i_4 )!(i_2 + i_4)!},
    \end{eqnarray*}
    which is identical and only identical to a corresponding subitem in the Taylor expansion for the second item in $h_{i,j,k,l}(1,1,1,1)$ by noting that $C^{l}_{t/2}(0) = P(Z_l< z_{t/2})$; in particular, similarly to the development of \eqref{eq-lem6-4}, we are able to derive:
    \begin{eqnarray}
&&P(Z_i < z_{t/2}, Z_j < z_{t/2}, Z_k < z_{t/2}) \nonumber \\
&=& \sum_{i_1 + i_2 + i_4 \in \{0, 1, 2, 3\} } C^{i}_{t/2}(i_1 + i_2 )C^{j}_{t/2}(i_1 + i_4 )C^{k}_{t/2}(i_2 + i_4 ) \nonumber \\
&& \hspace{0.4in} \times \frac{E\left\{g_{i_1}^2(X_1)\right\}E\left\{g_{i_2}^2(X_1)\right\} E\left\{g_{i_4}^2(X_1)\right\} \sigma_{ij}^{i_1} \sigma_{ik}^{i_2}\sigma_{jk}^{i_4}}{(i_1 + i_2 )!(i_1 + i_4 )!(i_2 + i_4)!} \nonumber \\
&&+ O\left(|\sigma_{ij}|^4 + |\sigma_{ik}|^4 +|\sigma_{jk}|^4\right). \label{eq-lem6-5}
\end{eqnarray}
    
    \item[(A2)] If the subscripts of the set $\sigma$'s whose corresponding powers are nonzero contain only two letter in their subscripts, indicating that one and one and only of $i_j, j=1,\ldots, 6$ is nonzero, we can find two of the second to fifth terms and one of the sixth to thirteenth terms in $h_{i,j,k,l}(1,1,1,1)$  contain the same corresponding subitem in \eqref{eq-lem6-4}. Note that the coefficients of the second to fifth terms in  $h_{i,j,k,l}(1,1,1,1)$ are ``-", while those of the sixth to thirteenth terms in $h_{i,j,k,l}(1,1,1,1)$ are "$+$", therefore they are cancelled. 
    
    For example, if we have only $i_1>0$, The subitem in \eqref{eq-lem6-4} is given by 
    \begin{eqnarray*}
    C^{k}_{t/2}(0) C^{l}_{t/2}(0)  C^{i}_{t/2}(i_1)C^{j}_{t/2}(i_1) \frac{E\left\{g_{i_1}^2(X_1)\right\} \sigma_{ij}^{i_1}}{(i_1!)^2},
    \end{eqnarray*}
    which is identical to the corresponding terms in $P(Z_i< z_{t/2}, Z_j<z_{t/2}, Z_k<Z_{t/2})P(Z_l<z_{t/2})$ by observing its Taylor expansion given by \eqref{eq-lem6-5}, $P(Z_i< z_{t/2}, Z_j<z_{t/2}, Z_l<Z_{t/2})P(Z_k<z_{t/2})$ (details are omitted), and $P(Z_i< z_{t/2}, Z_j<z_{t/2})P(Z_k<z_{t/2})P(Z_l<Z_{t/2})P(Z_k<z_{t/2})$, since
    \begin{eqnarray*}
    &&P(Z_i < z_{t/2}, Z_j < z_{t/2}) = \sum_{i_1 =0 }^3 \frac{C^{i}_{t/2}(i_1 )C^{j}_{t/2}(i_1 )E\left\{g_{i_1}^2(X_1)\right\} \sigma_{ij}^{i_1} }{(i_1!)^2} + O\left(|\sigma_{ij}|^4\right). \label{eq-lem6-6}
    \end{eqnarray*}
    
    \item[(A3)] If $i_j = 0$ for $j=1,\ldots, 6$, it can be checked that all the terms in $h_{i,j,k,l}(1,1,1,1)$ contain the subitem $C^{k}_{t/2}(0) C^{l}_{t/2}(0)  C^{i}_{t/2}(0)C^{j}_{t/2}(0)$, where seven have the coefficient ``$+$", and the other seven have the coefficient ``$-$", therefore they cancel each other. 
    
\end{itemize}

We observe that all the subitems in the second to the sixteenth items in $h_{i,j,k,l}(1,1,1,1)$ have been considered and therefore cancelled in Cases (A1)--(A3). Only the subitems with the set of $\sigma$'s whose corresponding powers are nonzeros contain all four letters in their subscripts. We consider the following possibilities.
\begin{itemize}
    \item[(B1)] Only two $i_j$'s are nonzero; therefore only two $\sigma$'s appear in the subitem in \eqref{eq-lem6-4}. The corresponding $\sigma$ components in it have three possibilities: $\sigma_{ij}^{i_1}\sigma_{kl}^{i_6}$, $\sigma_{ik}^{i_2}\sigma_{jl}^{i_5}$, and $\sigma_{il}^{i_3} \sigma_{jk}^{i_4}$. These three cases can be considered similarly; as an example, we consider that it is $\sigma_{ij}^{i_1} \sigma_{kl}^{i_6}$, with $i_1>0$ and $i_6>0$. Furthermore since $0< i_1+i_6 \leq 3$, we must have ``$i_1 = 1, i_6 = 1$" or ``$i_1 = 1, i_6 = 2$" or ``$i_1 = 2, i_6 = 1$". 
    
    \begin{itemize}
        \item If $i_1 = 1$ and $i_6 = 1$, the subitem in \eqref{eq-lem6-4} is give by
        \begin{eqnarray*} C^{i}_{t/2}(1)C^{j}_{t/2}(1)C^{k}_{t/2}(1)C^{l}_{t/2}(1)\sigma_{ij}  \sigma_{kl}, \end{eqnarray*}
        and we can check that the corresponding subitem in $h_{i,j,k,l}(a_1, a_2, a_3, a_4)$ is given by 
        \begin{eqnarray*} &&\left\{C^{i}_{t/2}(1)\right\}^{\frac{1+a_1}{2}}\left\{C^{i}_{1-t/2}(1)\right\}^{\frac{1-a_1}{2}}\left\{C^{j}_{t/2}(1)\right\}^{\frac{1+a_2}{2}}\left\{C^{j}_{1-t/2}(1)\right\}^{\frac{1-a_2}{2}}\\
       \times && \left\{C^{k}_{t/2}(1)\right\}^{\frac{1+a_3}{2}}\left\{C^{k}_{1-t/2}(1)\right\}^{\frac{1-a_3}{2}}\left\{C^{l}_{t/2}(1)\right\}^{\frac{1+a_4}{2}}\left\{C^{l}_{1-t/2}(1)\right\}^{\frac{1-a_4}{2}}\sigma_{ij}  \sigma_{kl}.  \end{eqnarray*}
       Using the fact that $C_{t/2}^{a}(1) = C_{1-t/2}^{a}(1)$ when $\mu_a = 0$ given by \eqref{eq-lem3-0}, we can check that when one of $\{\mu_i, \mu_j, \mu_k, \mu_l\}$ is equal to 0, all these subtems will cancel each other across $h_{i,j,k,l}(a_1, a_2, a_3, a_4)$ when evaluating \eqref{eq-lem6-1}. Therefore this subitem is remained only when  $\{\mu_i, \mu_j, \mu_k, \mu_l\}$ are all nonzero, and their contribution in \eqref{eq-lem6-1} is given by 
       \begin{eqnarray*}
       \left\{C_{t/2}^i(1) -  C_{1-t/2}^i(1)\right\}\left\{C_{t/2}^j(1) -  C_{1-t/2}^j(1)\right\}\\\times \left\{C_{t/2}^k(1) -  C_{1-t/2}^k(1)\right\}\left\{C_{t/2}^l(1) -  C_{1-t/2}^l(1)\right\} \sigma_{ij}\sigma_{kl}. 
       \end{eqnarray*}
       
       \item If $i_1=1$, $i_6 =2$, the subitem in \eqref{eq-lem6-4} is given by \begin{eqnarray*} \frac{1}{2}C^{i}_{t/2}(1)C^{j}_{t/2}(1)C^{k}_{t/2}(2)C^{l}_{t/2}(2)\sigma_{ij}  \sigma_{kl}^2, \end{eqnarray*} 
       and the corresponding subitem in $h_{i,j,k,l}(a_1, a_2, a_3, a_4)$ is given by 
        \begin{eqnarray*} &&\left\{C^{i}_{t/2}(1)\right\}^{\frac{1+a_1}{2}}\left\{C^{i}_{1-t/2}(1)\right\}^{\frac{1-a_1}{2}}\left\{C^{j}_{t/2}(1)\right\}^{\frac{1+a_2}{2}}\left\{C^{j}_{1-t/2}(1)\right\}^{\frac{1-a_2}{2}}\\
       \times && \left\{C^{k}_{t/2}(2)\right\}^{\frac{1+a_3}{2}}\left\{C^{k}_{1-t/2}(2)\right\}^{\frac{1-a_3}{2}}\left\{C^{l}_{t/2}(2)\right\}^{\frac{1+a_4}{2}}\left\{C^{l}_{1-t/2}(2)\right\}^{\frac{1-a_4}{2}}\sigma_{ij}  \sigma_{kl}^2.  \end{eqnarray*}
       When $\mu_i = 0$ or $\mu_j = 0$, all these subitems will cancel each other across $h_{i,j,k,l}(a_1, a_2, a_3, a_4)$ when evaluating \eqref{eq-lem6-1}. Therefore this subitem is remained only when  $\mu_i \neq 0$ and $\mu_j \neq 0$, and their contribution in \eqref{eq-lem6-1} is given by 
       \begin{eqnarray*}
       \frac{1}{2}\left\{C_{t/2}^i(1) -  C_{1-t/2}^i(1)\right\}\left\{C_{t/2}^j(1) -  C_{1-t/2}^j(1)\right\}\\\times \left\{C_{t/2}^k(2) -  C_{1-t/2}^k(2)\right\}\left\{C_{t/2}^l(2) -  C_{1-t/2}^l(2)\right\} \sigma_{ij}\sigma_{kl}^2.
       \end{eqnarray*}
       
       \item If $i_1 = 2$ and $i_6 = 1$, we can similarly conclude that the corresponding subitem in  $h_{i,j,k,l}(a_1, a_2, a_3, a_4)$ can be remained only when $\mu_k \neq 0$ and $\mu_l \neq 0$, and their contribution in \eqref{eq-lem6-1} is given by
       \begin{eqnarray*}
       \frac{1}{2}\left\{C_{t/2}^i(2) -  C_{1-t/2}^i(2)\right\}\left\{C_{t/2}^j(2) -  C_{1-t/2}^j(2)\right\}\\\times \left\{C_{t/2}^k(1) -  C_{1-t/2}^k(1)\right\}\left\{C_{t/2}^l(1) -  C_{1-t/2}^l(1)\right\} \sigma_{ij}^2\sigma_{kl}.
       \end{eqnarray*}
       
    \end{itemize}
    
    In summary, for subitems considered in (B1), when summed over $i,j,k,l$ for $i\neq j\neq k \neq l$, they are equal to 
    \begin{eqnarray*}
    3\left[\sum_{i\neq j; i,j\in \mathcal{H}_1}\left\{C_{t/2}^i(1) -  C_{1-t/2}^i(1)\right\}\left\{C_{t/2}^j(1) -  C_{1-t/2}^j(1)\right\}\sigma_{ij}\right]^2 \\ + 3\left[\sum_{i\neq j; i,j\in \mathcal{H}_1}\left\{C_{t/2}^i(1) -  C_{1-t/2}^i(1)\right\}\left\{C_{t/2}^j(1) -  C_{1-t/2}^j(1)\right\}\sigma_{ij}\right]\\
    \times \left[\sum_{i\neq j}\left\{C_{t/2}^i(2) -  C_{1-t/2}^i(2)\right\}\left\{C_{t/2}^j(2) -  C_{1-t/2}^j(2)\right\}\sigma_{ij}^2\right]. 
    \end{eqnarray*}

\item[(B2)] There are three $i_j$'s are nonzero, and therefore each is equal to 1. Note that the corresponding subscripts of $\sigma$ have six letters; and each of $i,j,k,l$ must appear at least once. There are two possibilities
\begin{itemize}
    \item One letter appears three times, but each of the other letters appear once. For example, letter $i$ appears three times, but $j,k,l$ appear once, namely $i_1 = i_2 = i_3 = 1$, which corresponds to the subitem in \eqref{eq-lem6-4}:
    \begin{eqnarray*}
    C_{t/2}^i(3) C_{t/2}^j(1) C_{t/2}^k(1) C_{t/2}^l(1)\frac{\sigma_{ij} \sigma_{ik}\sigma_{il}}{3!}.
    \end{eqnarray*}
    Using similar arguments as those in (B1), we can conclude that this subitem is remained in $h_{i,j,k,l}(a_1, a_2, a_3, a_4)$ only when $\mu_i, \mu_j, \mu_k, \mu_l$ are all nonzero, and they are in the order of $O(\sigma_{ij}\sigma_{ik}\sigma_{il})$. 
    
    \item Two letters appear twice, but each of the other two letters appear once. For example, letter $i$ and $j$ appear twice, but $k,l$ appear once, namely $i_1 = i_2 = i_5 = 1$, which corresponds to the subitem in \eqref{eq-lem6-4}:
    \begin{eqnarray*}
    C_{t/2}^i(2) C_{t/2}^j(2) C_{t/2}^k(1) C_{t/2}^l(1)\frac{\sigma_{ij} \sigma_{ik}\sigma_{jl}}{2!2!}.
    \end{eqnarray*}
    Using similar arguments as those in (B1), we can conclude that this subitem is remained in $h_{i,j,k,l}(a_1, a_2, a_3, a_4)$ only when $\mu_k \neq 0$ and $\mu_l \neq 0$, and they are in the order of $O(\sigma_{ij} \sigma_{ik}\sigma_{jl})$. 
    
\end{itemize}

In summary, the subitems considered in (B2), when summed over $i,j,k,l$ for $i\neq j\neq k\neq l$, they are in the order of 
\begin{eqnarray*}
O\left( \sum_{i\in \mathcal{H}_1}\left( \sum_{j\neq i, j\in\mathcal{H}_1} |\sigma_{ij}|\right)^3 \right) + O\left( \sum_{i\neq j} |\sigma_{ij}| \sum_{k\neq i, k\in \mathcal{H}_1} |\sigma_{ik}|\sum_{l\neq j, l\in \mathcal{H}_1}|\sigma_{jl}| \right). 
\end{eqnarray*}
    
\end{itemize}

\end{proof}

\begin{lemma}\label{lem-5}
Recall the definition: $t_i = \mathrm{1}(|Z_i| > |z_{t/2}|) = \mathrm{1}(P_i < t)$; we have
\begin{equation*}
    \begin{split}
        &\sum_{i\neq j\neq k}\Big|E(t_i t_j t_k) - E(t_i t_j)E(t_k) - E(t_i t_k)E(t_j) - E(t_j t_k)E(t_i) + 2E(t_i)E(t_j)E(t_k)\Big|\\
        = & O\left(\sum_{i=1}^p \left(\sum_{j\neq i, j\in \mathcal{H}_1} |\sigma_{ij}|\right)^2 \right) + O\left(\sum_{i\in \mathcal{H}_1}\left( \sum_{j\neq i} |\sigma_{ij}|\right) \left( \sum_{k\neq i, k\in \mathcal{H}_1}|\sigma_{ik}|\right) \right) \\ &+  O\left(\sum_{i\neq j\neq k} |\sigma_{ij}\sigma_{ik} \sigma_{kl}| \right) + O\left (p \sum_{i\neq j} \sigma_{ij}^4 \right). \\
    \end{split}
\end{equation*}
\end{lemma}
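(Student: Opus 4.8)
The plan is to follow verbatim the strategy used for Lemma \ref{lem-6}, but with one fewer index, so that the combinatorics is much lighter. First I would rewrite the third-order object by a sign decomposition analogous to \eqref{eq-lem6-1}: for each fixed triple $i\neq j\neq k$,
$$ E(t_it_jt_k) - E(t_it_j)E(t_k) - E(t_it_k)E(t_j) - E(t_jt_k)E(t_i) + 2E(t_i)E(t_j)E(t_k) = \sum_{a_1,a_2,a_3\in\{1,-1\}} a_1a_2a_3\, h_{i,j,k}(a_1,a_2,a_3), $$
where $h_{i,j,k}(a_1,a_2,a_3)$ is the third joint cumulant of the events $\{Z_i<a_1z_{t/2}\}$, $\{Z_j<a_2z_{t/2}\}$, $\{Z_k<a_3z_{t/2}\}$. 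As in the proof of Lemma \ref{lem-6}, I would treat $h_{i,j,k}(1,1,1)$ in detail and obtain the remaining sign choices by the same argument. Representing $Z_i,Z_j,Z_k$ through the three shared latent factors carrying $\sqrt{\sigma_{ij}},\sqrt{\sigma_{ik}},\sqrt{\sigma_{jk}}$ together with independent idiosyncratic parts, the leading joint probability $P(Z_i<z_{t/2},Z_j<z_{t/2},Z_k<z_{t/2})$ admits precisely the expansion \eqref{eq-lem6-5}, and the pairwise and single probabilities expand analogously.

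Next I would Taylor-expand every $\Phi(\cdot)$ factor via Lemma \ref{lem-3}, retain all monomials of total order at most $3$ in $\{\sigma_{ij},\sigma_{ik},\sigma_{jk}\}$, and push the Lagrange remainder into an $O(\sigma_{ij}^4+\sigma_{ik}^4+\sigma_{jk}^4)$ term; the orthogonality relations $E\{g_{i_1}(X)g_{i_2}(X)\}=0$ for $i_1\neq i_2$ then collapse the cross terms exactly as in \eqref{eq-lem6-4}. The key cancellation is the analog of cases (A1)--(A3): any monomial whose nonzero-power $\sigma$'s omit one of the three letters cancels against the matching subitem in the corresponding product term. Concretely, the constant monomial appears with net coefficient $1-3+2=0$, while a monomial supported on a single pair, say $\sigma_{ij}^{i_1}$ with $i_1\geq1$ (so that $k$ is absent), appears with coefficient $+1$ in the expansion of $P(Z_i<z_{t/2},Z_j<z_{t/2},Z_k<z_{t/2})$ and $-1$ in that of $-E(t_it_j)E(t_k)$ and nowhere else, hence cancels. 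Thus only monomials in which all three letters $i,j,k$ occur in the subscripts of the surviving $\sigma$'s can contribute.

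I would then enumerate these survivors, which---since with three indices there are only the three pairs $(ij),(ik),(jk)$---split into the two-pair monomials $\sigma_{ij}^{i_1}\sigma_{ik}^{i_2}$ (and its two relabelings) with $(i_1,i_2)\in\{(1,1),(1,2),(2,1)\}$, and the single three-pair monomial $\sigma_{ij}\sigma_{ik}\sigma_{jk}$. For each survivor I would carry out the sign-sum over $(a_1,a_2,a_3)$ and invoke the parity identity \eqref{eq-lem3-0}: a letter entering a single pair contributes a factor of the form $C^{\cdot}_{t/2}(1)-C^{\cdot}_{1-t/2}(1)$, which vanishes unless that letter's mean is nonzero, whereas a letter entering with an even combined power contributes a $C^{\cdot}_{t/2}(2)-C^{\cdot}_{1-t/2}(2)$-type factor that survives for any mean. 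Tracking these constraints shows the $(1,1)$ survivor is $O(|\sigma_{ij}\sigma_{ik}|)$ but only when the two once-appearing indices lie in $\mathcal{H}_1$; the $(2,1)$ and $(1,2)$ survivors are $O(\sigma_{ij}^2|\sigma_{ik}|)\leq O(|\sigma_{ij}\sigma_{ik}|)$ (using $|\sigma_{ij}|\leq1$) with an $\mathcal{H}_1$ restriction on the hub $i$ and on the once-appearing index; and the three-pair survivor is $O(|\sigma_{ij}\sigma_{ik}\sigma_{jk}|)$ with no mean restriction.

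Finally, summing these bounds over $i\neq j\neq k$ and collecting the Lagrange remainders (each of order $\sigma^4$ on a fixed pair, summed over the $O(p)$ choices of the remaining index) produces the four stated terms: $O(\sum_i(\sum_{j\neq i,\,j\in\mathcal{H}_1}|\sigma_{ij}|)^2)$ from the $(1,1)$ survivors, $O(\sum_{i\in\mathcal{H}_1}(\sum_{j\neq i}|\sigma_{ij}|)(\sum_{k\neq i,\,k\in\mathcal{H}_1}|\sigma_{ik}|))$ from the $(2,1)/(1,2)$ survivors, $O(\sum_{i\neq j\neq k}|\sigma_{ij}\sigma_{ik}\sigma_{jk}|)$ from the three-pair survivor, and $O(p\sum_{i\neq j}\sigma_{ij}^4)$ from the remainders. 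The main obstacle is the bookkeeping of the sign-sum cancellations and the associated $\mathcal{H}_1$ restrictions for each monomial type---that is, correctly certifying which survivors force a once-appearing index to have nonzero mean---rather than any analytic difficulty, since the Taylor expansion, orthogonality, and telescoping cancellations are all inherited from Lemmas \ref{lem-3} and \ref{lem-6}.
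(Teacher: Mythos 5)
Your proposal is correct and follows exactly the route the paper intends: the paper's own ``proof'' of this lemma is a one-line remark that the argument is similar to but simpler than that of Lemma \ref{lem-6}, and your reconstruction (sign-sum over $a_1,a_2,a_3$, Taylor expansion via Lemma \ref{lem-3}, orthogonality, the (A1)--(A3)-type cancellations, and the enumeration of surviving monomials with their parity-induced $\mathcal{H}_1$ restrictions) is precisely that omitted argument, with the survivor bookkeeping matching the four stated error terms. No gaps.
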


\begin{proof}
The proof of this lemma is similar to but simpler than that of Lemma \ref{lem-6}. The details are thus omitted. 
\end{proof}

\begin{lemma}\label{lem-7}
Let
\begin{eqnarray*}
m(\bar{V},\bar{R}) = \frac{\bar{V}}{E(\bar{R})} - \frac{E(\bar{V})}{[E(\bar{R})]^2} \bar{R}. 
\end{eqnarray*}
Assume $\limsup_{p\to \infty} p_0t/(p_1\bar \xi) < 1$, and as $p$ is sufficiently large,  
\begin{eqnarray}
&& \sum_{i,j}|\sigma_{ij}| = O\left(p^{2-\delta}\right), \label{eq-lem7-assum-0}\\
&&\sum_{i\neq j; i,j\in \mathcal{H}_0} \sigma_{ij}^2 \geq \frac{C_t^{\max}}{\phi(z_{t/2})|z_{t/2}|} \sum_{i\in \mathcal{H}_1, j\in \mathcal{H}_0, \mu_i\in[-\mu_t, \mu_t]} \sigma_{ij}^2, \label{eq-lem7-assum-1-added} \\
&&\sum_{i\neq j; i,j\in \mathcal{H}_0} \sigma_{ij}^2 + p \gtrsim \sum_{i\neq j; i, j\in \mathcal{H}_1} \sigma_{ij}^2  \label{eq-lem7-assum-1-added-2}\\
&& \sum_{i\neq j} \sigma_{ij}^4 = o\left(\sum_{i\neq j; i,j\in \mathcal{H}_0} \sigma_{ij}^2  + p_0\right),  \label{eq-lem7-assum-3}
\end{eqnarray}
where $\mu_t$ and $C_t^{\max}$ are defined in Lemma \ref{lem-4}. 
We have
\begin{eqnarray}
E\left[\{R-E(R)\}^4\right] &=& o\left(p^4\mbox{Var}(m(\bar V, \bar R)\right) \label{eq-lem7-0-1}\\
E\left[\{V-E(V)\}^4\right] &=& o\left(p^4\mbox{Var}(m(\bar V, \bar R)\right).  \label{eq-lem7-0-2}
\end{eqnarray}
\end{lemma}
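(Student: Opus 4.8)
The plan is to reduce $E[\{R-E(R)\}^4]$ to a finite collection of $\sigma$-sums, each already controlled by Lemmas \ref{lem-4}--\ref{lem-6}, and then to compare these against $p^4\Var\{m(\bar V,\bar R)\}$. Set $\pi_i = E(t_i)$ and $W_i = t_i - \pi_i$, so that $R - E(R) = \sum_{i=1}^p W_i$ and $E[\{R-E(R)\}^4] = \sum_{i,j,k,l} E(W_iW_jW_kW_l)$. Sorting the ordered tuples $(i,j,k,l)$ by the coincidence pattern of their indices yields
\begin{equation*}
E[\{R-E(R)\}^4] = \sum_i E(W_i^4) + 4\sum_{i\neq j}E(W_i^3W_j) + 3\sum_{i\neq j}E(W_i^2W_j^2) + 6\sum_{i\neq j\neq k}E(W_i^2W_jW_k) + \sum_{i\neq j\neq k\neq l}E(W_iW_jW_kW_l),
\end{equation*}
where the last, fully distinct, sum is \emph{exactly} the quantity evaluated in Lemma \ref{lem-6}: expanding $\prod_m(t_m - \pi_m)$ over four distinct indices reproduces its summand term for term.

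The repeated-index sums collapse onto pairwise covariances by idempotency of the indicators. Because $t_i^2 = t_i$, one has $W_i^2 = (1-2\pi_i)W_i + \Var(t_i)$ and $W_i^3 = (1-3\pi_i+3\pi_i^2)W_i + c_i$ with uniformly bounded coefficients, giving $\sum_i E(W_i^4)\lesssim p$, $E(W_i^3W_j) = (1-3\pi_i+3\pi_i^2)\Cov(t_i,t_j)$, $E(W_i^2W_j^2) = (1-2\pi_i)(1-2\pi_j)\Cov(t_i,t_j) + \Var(t_i)\Var(t_j)$, and $E(W_i^2W_jW_k) = (1-2\pi_i)E(W_iW_jW_k) + \Var(t_i)\Cov(t_j,t_k)$. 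Hence the second through fourth sums are bounded respectively by a constant multiple of $\sum_{i\neq j}|\Cov(t_i,t_j)|$, of $p^2 + \sum_{i\neq j}|\Cov(t_i,t_j)|$, and of the third-order sum of Lemma \ref{lem-5} plus $p\sum_{i\neq j}|\Cov(t_i,t_j)|$. Using Lemma \ref{lem-4} to bound $|\Cov(t_i,t_j)|$ by $|\sigma_{ij}|$ for $i,j\in\mathcal{H}_1$ and by $\sigma_{ij}^2$ otherwise, assumption \eqref{eq-lem7-assum-0} gives $\sum_{i\neq j}|\Cov(t_i,t_j)| = O(p^{2-\delta})$, so that, after also inserting the right-hand sides of Lemmas \ref{lem-5} and \ref{lem-6}, $E[\{R-E(R)\}^4]$ is a sum of terms of the forms $p^2$, $p^{3-\delta}$, $[\sum_{i\neq j;\,i,j\in\mathcal{H}_1}\Cov(t_i,t_j)]^2$, mixed products of an $\mathcal{H}_1$-covariance sum with a $\sum\sigma^2$, cubic $\sigma$-sums, and $O(p^2\sum_{i\neq j}\sigma_{ij}^4)$.

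For the comparison, write $pm = (a-b)V - bU$ with $U=\sum_{i\in\mathcal{H}_1}t_i$, $R=V+U$, $a = p/E(R)$, $b = pE(V)/\{E(R)\}^2$; under $\limsup p_0t/(p_1\bar\xi)<1$ one has $a-b = \Theta(1)$ and $b = O(1)$, and since $\Var(m)=\Var(pm)/p^2$ we have $p^4\Var\{m\} = p^2\Var(pm)$. The diagonal part of $\Var(pm)$ equals $(a-b)^2\sum_{\mathcal{H}_0}\Var(t_i) + b^2\sum_{\mathcal{H}_1}\Var(t_i)$, which is of the same order as $p^2V_1(t)$ from Corollary \ref{Corrollary-asym-var} and diverges (dominating the $O(p^2)$ diagonal fourth-moment contribution provided $p_0\to\infty$, as in the FDP framework), while the off-diagonal part is $p^2V_2(t)$. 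The crux, and the main obstacle, is the matching lower bound $p^4\Var\{m\}\gtrsim p^2 + p^2\sum_{i\neq j;\,i,j\in\mathcal{H}_0}\sigma_{ij}^2 + p^2\bigl|\sum_{i\neq j;\,i,j\in\mathcal{H}_1}\Cov(t_i,t_j)\bigr|$: one must show that the possibly negative $\mathcal{H}_0$--$\mathcal{H}_1$ cross-covariances cannot cancel the dominant positive $\mathcal{H}_0$--$\mathcal{H}_0$ and $\mathcal{H}_1$--$\mathcal{H}_1$ contributions, and this is precisely where assumptions \eqref{eq-lem7-assum-1-added} and \eqref{eq-lem7-assum-1-added-2} enter.

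Granting this lower bound, every listed term is $o(p^4\Var\{m\})$: the leading Lemma \ref{lem-6} term obeys $[\sum_{\mathcal{H}_1}\Cov(t_i,t_j)]^2 = o(p^2\Var(pm))$ because $\sum_{\mathcal{H}_1}|\Cov(t_i,t_j)| = O(p^{2-\delta})=o(p^2)$; the $p^2$, $p^{3-\delta}$, mixed, and cubic $\sigma$-sums are handled the same way using \eqref{eq-lem7-assum-0}; and the $O(p^2\sum_{i\neq j}\sigma_{ij}^4)$ remainder is negligible against $p^2\sum_{\mathcal{H}_0}\sigma_{ij}^2 + p^2p_0$ by \eqref{eq-lem7-assum-3}. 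This establishes \eqref{eq-lem7-0-1}. Finally, \eqref{eq-lem7-0-2} follows by running the identical decomposition for $V = \sum_{i\in\mathcal{H}_0}t_i$ with all index sums restricted to $\mathcal{H}_0$: the dominant Lemma \ref{lem-6} terms carry $\mathcal{H}_1$ factors and hence drop out, so only strictly smaller pieces survive and the bound is easier. The entire argument reduces to the variance lower bound $\Var(pm)\gtrsim\Var(R)$; once that is secured, the remainder is routine bookkeeping under \eqref{eq-lem7-assum-0}--\eqref{eq-lem7-assum-3}.
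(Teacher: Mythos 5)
Your decomposition of $E[\{R-E(R)\}^4]$ by index-coincidence pattern, the use of Lemma \ref{lem-6} for the fully distinct sum and Lemma \ref{lem-5} for the triple-index sums, and the reduction of repeated indices via idempotency of the $t_i$ all match the paper's cases (C1)--(C4) in structure. However, there is a genuine gap: you correctly identify the variance lower bound --- that $p^4\Var\{m(\bar V,\bar R)\}$ dominates $p^2\bigl(\sum_{i\neq j;\,i,j\in\mathcal{H}_0}\sigma_{ij}^2+p_0+\Var(S)\bigr)$, equivalently $\Var(pm)\gtrsim \Var(R)$ --- as ``the crux and the main obstacle,'' and then you simply grant it. That bound is the analytic heart of the paper's proof. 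It is established there by writing $m(\bar V,\bar R)=A\{\bar V/E(\bar V)-\bar S/E(\bar S)\}$ with $S=R-V$, expanding $\Cov(\bar V/E(\bar V),\bar S/E(\bar S))$ via Lemma \ref{lem-4}(P2) to get the explicit upper bound $\phi(z_{t/2})|z_{t/2}|C_t^{\max}\sum_{\mu_i\in[-\mu_t,\mu_t]}\sigma_{ij}^2$ on the cross term, and then invoking \eqref{eq-lem7-assum-1-added} together with $\limsup p_0t/(p_1\bar\xi)<1$ to show the negative cross-covariance cannot cancel the $\mathcal{H}_0$--$\mathcal{H}_0$ contribution. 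Saying ``this is precisely where the assumptions enter'' is not a proof of that step; without it the lemma is not established.

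A second, related weakness is your treatment of the leading term from Lemma \ref{lem-6}. You claim $[\sum_{i\neq j;\,i,j\in\mathcal{H}_1}\Cov(t_i,t_j)]^2=o(p^2\Var(pm))$ ``because'' the sum is $O(p^{2-\delta})=o(p^2)$. By itself this gives only $O(p^{4-2\delta})$, which need not be $o(p^2\Var(pm))$ when $\Var(pm)\asymp p$ and $\delta\le 1/2$; the argument closes only if you additionally have $\Var(pm)\gtrsim\bigl|\sum_{i\neq j;\,i,j\in\mathcal{H}_1}\Cov(t_i,t_j)\bigr|$, which is again part of the unproven lower bound (and requires controlling $\Cov(V,S)$ so that $\Var(S)$ survives in $\Var(pm)$). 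The paper avoids relying on this by a different device: it pairs $\mathcal{I}_1=\mathcal{I}_{1,1}^2$ with the mixed term $\mathcal{I}_2=\mathcal{I}_{1,1}\mathcal{I}_{2,2}$, shows $\mathcal{I}_{1,1}+\mathcal{I}_{2,2}=-\mathcal{I}_{2,3}+O(p^2\Var\{m(\bar V,\bar R)\})$ by identifying this combination with $\Var(R)+\Var(V)+2\Cov(V,S)$ up to explicitly controlled remainders, and then multiplies by $\mathcal{I}_{1,1}=o(p^2)$. You should either reproduce that cancellation or prove the stronger lower bound you are invoking; as written, both routes are left open.
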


\begin{proof} 
We consider $\mbox{Var}\left(m(\bar{V},\bar{R})\right) $ first. Recall that $R = V + S$; therefore $\bar R = \bar V + \bar S$ with $\bar R = R/p$, $\bar V = V/p$ and $\bar S = S/p$, and 
\begin{eqnarray*}
m(\bar{V},\bar{R}) &=& \frac{E(\bar S)}{\{E(\bar R)\}^2} \bar V - \frac{E(\bar V)}{\{E(\bar R)\}^2} \bar S\\
&=& A \left( \frac{\bar V}{E(\bar V)} - \frac{\bar S}{E(\bar S)} \right),
\end{eqnarray*}
where $A = E(\bar S) E(\bar V)/\{E(\bar R)\}^2$. We shall derive
\begin{eqnarray}
\mbox{Var}\left\{m(\bar{V},\bar{R})\right\} \gtrsim \frac{\sum_{i\neq j; i,j\in \mathcal{H}_0} \sigma_{ij}^2 + p_0  + \mbox{Var}(S)}{p^2}.  \label{eq-lem7-1}
\end{eqnarray}
To this end, 
consider 
\begin{eqnarray}
\frac{1}{A^2}\mbox{Var}\left(m(\bar{V},\bar{R})\right) &=& \mbox{Var}(\bar V/E(\bar V)) +  \mbox{Var}(\bar S/E(\bar S)) - 2\mbox{Cov}(\bar V/E(\bar V), \bar S/E(\bar S)), \label{eq-lem7-1-added-6}
\end{eqnarray}
and based on Lemma \ref{lem-4},
\begin{eqnarray}
&&\mbox{Cov}(\bar V/E(\bar V), \bar S/E(\bar S)) = \frac{1}{E(V) E(S)} \sum_{i\in \mathcal{H}_1, j\in \mathcal{H}_0} \left\{ E(t_it_j) - E(t_i)E(t_j) \right\} \nonumber\\
&\leq& \frac{\phi( z_{t/2})|z_{t/2}| C_t^{\max}}{p_0 p_1 t \bar \xi}  \sum_{i\in \mathcal{H}_1, j\in \mathcal{H}_0; \mu_i \in[-\mu_t, \mu_t]} \sigma_{ij}^2 + \frac{1}{p_0 p_1 t \bar \xi} O\left(\sum_{i\in \mathcal{H}_1, j\in \mathcal{H}_0} \sigma_{ij}^4\right).  \label{eq-lem7-1-added-7}
\end{eqnarray}
Combining \eqref{eq-lem7-1-added-6} with \eqref{eq-lem7-1-added-7}, and applying Lemma \ref{lem-4}, we have
\begin{eqnarray*}
\frac{1}{A^2}\mbox{Var}\left(m(\bar{V},\bar{R})\right) &\geq& \frac{2\phi^2(z_{t/2})z_{t/2}^2 \sum_{i\neq j; i,j\in \mathcal{H}_0} \sigma_{ij}^2 + p_0t(1-t) + O\left( \sum_{i\neq j, i,j\in \mathcal{H}_0} \sigma_{ij}^4\right)}{p_0^2 t^2}\\
&&-\frac{p_0 t}{p_1 \bar \xi}\frac{2 \phi(z_{t/2}) |z_{t/2}|C_t^{\max}}{p_0^2  t^2}  \sum_{i\in \mathcal{H}_1, j\in \mathcal{H}_0; \mu_i \in [-\mu_t, \mu_t]} \sigma_{ij}^2 - \frac{O\left(\sum_{i\in \mathcal{H}_1, j\in \mathcal{H}_0} \sigma_{ij}^4\right)}{p_0p_1t\bar \xi} \\&&+ \mbox{Var}(S/E(S))\\
&\gtrsim& \frac{\sum_{i\neq j; i,j\in \mathcal{H}_0} \sigma_{ij}^2 + p_0}{p_0^2} + \mbox{Var}(S/E(S)),
\end{eqnarray*}
where  ``$\gtrsim$" is based on assumptions \eqref{eq-lem7-assum-1-added} and \eqref{eq-lem7-assum-3}; this verifies \eqref{eq-lem7-1}.

We next consider $E[\{R-E(R)\}^4]$:
\begin{equation}\label{eq-lem7-2}
    \begin{split}
         &E[\{R-E(R)\}^4] =  E(R^4) - 4E(R^3)E(R) + 6E(R^2)E^2(R) - 3E^4(R)\\
        = & E\left\{\left(\sum_{i=1}^p t_i\right)^4\right\} - 4E\left\{\left(\sum_{i=1}^p t_i\right)^3\right\}E\left(\sum_{i=1}^p t_i\right) \\ &+ 6E\left\{\left(\sum_{i=1}^p t_i\right)^2\right\}E^2\left(\sum_{i=1}^p t_i\right) - 3E^4\left(\sum_{i=1}^p t_i\right)\\
        = & E\left(\sum_{i, j, k, l = 1}^p t_i t_j t_k t_l\right) - 4E\left(\sum_{i, j, k = 1}^pt_it_jt_k\right)E\left(\sum_{l=1}^pt_l\right) \\ &+ 6E\left(\sum_{i, j = 1}^p t_it_j\right)E\left(\sum_{k=1}^pt_k\right)E\left(\sum_{l=1}^pt_l\right)\\
        & - 3E\left(\sum_{i= 1}^p t_i\right)E\left(\sum_{j= 1}^p t_j\right)E\left(\sum_{k= 1}^p t_k\right)E\left(\sum_{l= 1}^p t_l\right). 
    \end{split}
\end{equation}
We need to consider the following possibilities:
\begin{itemize}
    \item[(C1)] Based on Lemma \ref{lem-5}, the collection of terms in (\ref{eq-lem7-2}) where $\{i, j, k, l\}$ are all different is given by 
    \begin{equation}
        \begin{split}
            & \sum_{i\neq j\neq k \neq l}\Big\{E(t_i t_j t_k t_l) - E(t_i t_j t_k)E(t_l) - E(t_i t_j t_l)E(t_k) - E(t_i t_k t_l)E(t_j) - E(t_j t_k t_l)E(t_i)\\
        + &E(t_i t_j)E(t_k)E(t_l) + E(t_i t_k)E(t_j)E(t_l) + E(t_i t_l)E(t_j)E(t_k) + E(t_j t_k)E(t_i)E(t_l)\\
        + &E(t_j t_l)E(t_i)E(t_k) + E(t_k t_l)E(t_i)E(t_j) -3E(t_i)E(t_j)E(t_k)E(t_l)\Big\}\\
        = &3\left[\sum_{i\neq j; i,j\in \mathcal{H}_1}\left\{C_{t/2}^i(1) -  C_{1-t/2}^i(1)\right\}\left\{C_{t/2}^j(1) -  C_{1-t/2}^j(1)\right\}\sigma_{ij}\right]^2 \\ & + 3\left[\sum_{i\neq j; i,j\in \mathcal{H}_1}\left\{C_{t/2}^i(1) -  C_{1-t/2}^i(1)\right\}\left\{C_{t/2}^j(1) -  C_{1-t/2}^j(1)\right\}\sigma_{ij}\right]\\
    & \times \left[\sum_{i\neq j}\left\{C_{t/2}^i(2) -  C_{1-t/2}^i(2)\right\}\left\{C_{t/2}^j(2) -  C_{1-t/2}^j(2)\right\}\sigma_{ij}^2\right] \\
        &+ O\left( \sum_{i\in \mathcal{H}_1}\left( \sum_{j\neq i, j\in\mathcal{H}_1} |\sigma_{ij}|\right)^3 \right) + O\left( \sum_{i\neq j} |\sigma_{ij}| \sum_{k\neq i, k\in \mathcal{H}_1} |\sigma_{ik}|\sum_{l\neq j, l\in \mathcal{H}_1}|\sigma_{jl}| \right)+ O\left( p^2 \sum_{i\neq j}\sigma_{ij}^4\right)\\
        &\equiv 3\mathcal{I}_1 + 3\mathcal{I}_2 + \mathcal{I}_3 + \mathcal{I}_4 + \mathcal{I}_5. 
        \end{split} \label{eq-lem7-3}
    \end{equation}
    We consider the above $\mathcal{I}$ terms one by one.  We consider $\mathcal{I}_1+\mathcal{I}_2$ first. Note that 
    \begin{eqnarray*}
    \mathcal{I}_1 = \mathcal{I}_{1,1} \cdot \mathcal{I}_{1,1} \\
    \mathcal{I}_2 = \mathcal{I}_{1,1} \cdot \mathcal{I}_{2,2},
    \end{eqnarray*}
    with
    \begin{eqnarray*}
    \mathcal{I}_{1,1} &=& \sum_{i\neq j; i,j\in \mathcal{H}_1}\left\{C_{t/2}^i(1) -  C_{1-t/2}^i(1)\right\}\left\{C_{t/2}^j(1) -  C_{1-t/2}^j(1)\right\}\sigma_{ij}\\
    \mathcal{I}_{2,2} &=& \sum_{i\neq j}\left\{C_{t/2}^i(2) -  C_{1-t/2}^i(2)\right\}\left\{C_{t/2}^j(2) -  C_{1-t/2}^j(2)\right\}\sigma_{ij}^2.
    \end{eqnarray*}
    As a consequence
    \begin{eqnarray}
    \mathcal{I}_1 + \mathcal{I}_2 = \mathcal{I}_{1,1}(\mathcal{I}_{1,1} + \mathcal{I}_{2,2}), \label{eq-lem7-4-added}
    \end{eqnarray}
    and 
    we need to verify
    \begin{eqnarray}
        \mathcal{I}_1 + \mathcal{I}_2  = o\left(p^4\mbox{Var}(\bar V, \bar R) \right) \label{eq-lem7-4}
        \end{eqnarray}
    Based on the assumption \eqref{eq-lem7-assum-0}, we immediately have 
    \begin{eqnarray}
    \mathcal{I}_{1,1} = o(p^2), \label{eq-lem7-5}
    \end{eqnarray}
    and we have the following decomposition for $\mathcal{I}_{2,2} + \mathcal{I}_{1,1}$:
    \begin{eqnarray}
    \mathcal{I}_{2,2} + \mathcal{I}_{1,1} &=&  \left\{\mathcal{I}_{1,1} +\frac{1}{2}\mathcal{I}_{2,2} + \mathcal{I}_{2,3} + p_0t(1-t) +  \sum_{i=1}^{p_1} \xi_i (1-\xi_i)\right\} \nonumber \\&& + \left\{\frac{1}{2}\mathcal{I}_{2,2} -\mathcal{I}_{2,3} - p_0t(1-t) -  \sum_{i=1}^{p_1} \xi_i (1-\xi_i)\right\}, \label{eq-lem7-6}
    \end{eqnarray}
    where 
    \begin{eqnarray*}
    \mathcal{I}_{2,3} = \frac{E\{g_3^2(X_1)\}}{(3!)^2}\sum_{i \neq j} \left\{C_{t/2}^i(3) -  C_{1-t/2}^i(3)\right\}\left\{C_{t/2}^j(3) -  C_{1-t/2}^j(3)\right\}\sigma_{ij}^3.  \label{eq-lem7-7}
    \end{eqnarray*}
    Furthermore, with Lemmas \ref{lem-3} and \ref{lem-4}, we are able to check 
    \begin{eqnarray}
    \mbox{Var}(R) &=& \sum_{i\neq j} \{E(t_it_j) - E(t_i)E(t_j)\} + \sum_{i=1}^p \left[E(t_i^2) - \{E(t_i)\}^2\right] \nonumber \\
    &=& \mathcal{I}_{1,1} +\frac{1}{2}\mathcal{I}_{2,2} + \mathcal{I}_{2,3} + p_0t(1-t) +  \sum_{i=1}^{p_1} \xi_i (1-\xi_i) + O\left(\sum_{i\neq j} \sigma_{ij}^4 \right), \label{eq-lem7-8}
    \end{eqnarray}
    and
    \begin{eqnarray}
    &&\mbox{Var}(V) + 2 \mbox{Cov}(V, S) \nonumber \\&=& \sum_{i\neq j; i,j\in \mathcal{H}_0} \{E(t_it_j) - E(t_i)E(t_j)\} \nonumber \\&& + \sum_{i\in \mathcal{H}_1, j\in \mathcal{H}_0} \{E(t_it_j) - E(t_i)E(t_j)\} + \sum_{i=1}^{p_0} \left[E(t_i^2) - \{E(t_i)\}^2\right]\nonumber \\
    &=& \frac{1}{2}\mathcal{I}_{2,2} + p_0t(1-t) \nonumber \\ && - \frac{1}{2}\sum_{i\neq j; i,j\in \mathcal{H}_1} \left\{C_{t/2}^i(2) -  C_{1-t/2}^i(2)\right\}\left\{C_{t/2}^j(2) -  C_{1-t/2}^j(2)\right\}\sigma_{ij}^2 \nonumber \\
    && + O\left(\sum_{i\neq j}\sigma_{ij}^4 \right)\label{eq-lem7-9}
    \end{eqnarray}
    Combining \eqref{eq-lem7-6}, \eqref{eq-lem7-8}, and  \eqref{eq-lem7-9} leads to 
    \begin{eqnarray}
    \mathcal{I}_{2,2} + \mathcal{I}_{1,1} &=& \mbox{Var}(R) + \mbox{Var}(V) + 2 \mbox{Cov}(V,S) - 2p_0t(1-t) - \sum_{i=1}^{p_1} \xi_i (1-\xi_i)\nonumber\\&&
    - I_{2,3} + \frac{1}{2} \sum_{i\neq j; i,j\in \mathcal{H}_1} \left\{C_{t/2}^i(2) -  C_{1-t/2}^i(2)\right\}\left\{C_{t/2}^j(2) -  C_{1-t/2}^j(2)\right\}\sigma_{ij}^2\nonumber\\
    && + O\left(\sum_{i\neq j}\sigma_{ij}^4 \right)\nonumber\\
    &=& 2\mbox{Var}(V) + 4\mbox{Cov}(V, S) +  \mbox{Var}(S) - 2p_0t(1-t) - \sum_{i=1}^{p_1} \xi_i (1-\xi_i)\nonumber\\&&
    - I_{2,3} + \frac{1}{2} \sum_{i\neq j; i,j\in \mathcal{H}_1} \left\{C_{t/2}^i(2) -  C_{1-t/2}^i(2)\right\}\left\{C_{t/2}^j(2) -  C_{1-t/2}^j(2)\right\}\sigma_{ij}^2\nonumber \\
    && + O\left(\sum_{i\neq j}\sigma_{ij}^4 \right). \label{eq-lem7-10}
    \end{eqnarray}
    Based on \eqref{eq-lem7-1} and Cauchy-Schwartz inequality, we conclude 
    \begin{equation}
    2\mbox{Var}(V) + 4\mbox{Cov}(V, S) +  \mbox{Var}(S) - 2p_0t(1-t) - \sum_{i=1}^{p_1} \xi_i (1-\xi_i) = O\left(p^2\mbox{Var}(m(\bar V, \bar R)\right); \label{eq-lem7-11}
    \end{equation}
    and based on the assumptions \eqref{eq-lem7-assum-1-added-2} and \eqref{eq-lem7-assum-3}, we have
    \begin{equation}
    \frac{1}{2} \sum_{i\neq j; i,j\in \mathcal{H}_1} \left\{C_{t/2}^i(2) -  C_{1-t/2}^i(2)\right\}\left\{C_{t/2}^j(2) -  C_{1-t/2}^j(2)\right\}\sigma_{ij}^2
    + O\left(\sum_{i\neq j}\sigma_{ij}^4 \right) = O\left(p^2\mbox{Var}(m(\bar V, \bar R)\right) \label{eq-lem7-12}
    \end{equation}
    Combining \eqref{eq-lem7-10} \eqref{eq-lem7-11} and \eqref{eq-lem7-12}, we have
    \begin{eqnarray*}
     \mathcal{I}_{2,2} + \mathcal{I}_{1,1} &=& -\mathcal{I}_{2,3} + O\left(p^2\mbox{Var}(m(\bar V, \bar R)\right),
    \end{eqnarray*}
    which together with \eqref{eq-lem7-5} leads to 
    \begin{eqnarray}
    \mathcal{I}_{1,1}(\mathcal{I}_{2,2} + \mathcal{I}_{1,1}) = -\mathcal{I}_{1,1}\mathcal{I}_{2,3} + o\left(p^4\mbox{Var}(m(\bar V, \bar R)\right). \label{eq-lem7-13}
    \end{eqnarray}
    For $\mathcal{I}_{1,1}\mathcal{I}_{2,3}$:
    \begin{eqnarray*}
    |\mathcal{I}_{1,1}\mathcal{I}_{2,3}| \lesssim \sum_{i\neq j; i,j\in \mathcal{H}_1}|\sigma_{i,j}|\sum_{k\neq l}|\sigma_{k,l}|^3 \lesssim p^2 \sum_{i\neq j} \sigma_{ij}^4 = o\left(p^4\mbox{Var}(m(\bar V, \bar R)\right),
    \end{eqnarray*}
    which together with \eqref{eq-lem7-13} and \eqref{eq-lem7-4-added} verifies \eqref{eq-lem7-4}. 

  We proceed to consider $\mathcal{I}_3$. 
  \begin{eqnarray}
  \sum_{i\neq j\neq k\neq l; i,j,k,l\in \mathcal{H}_1}|\sigma_{ij}||\sigma_{ik}||\sigma_{il}|&\leq& \sum_{k\ne l, k,l\in \mathcal{H}_1}\left(\sum_{i\neq j, i,j\in \mathcal{H}_1} \sigma_{ij}^2\right)^{1/2}\left(\sum_{i\neq j\neq k\neq l; i,j\in \mathcal{H}_1}\sigma_{ik}^2 \sigma_{il}^2\right)^{1/2}\nonumber\\
  &=& \left(\sum_{i\neq j; i,j\in \mathcal{H}_1} \sigma_{ij}^2\right)^{1/2} \sum_{k\neq l; k,l\in \mathcal{H}_1}\left\{(p_1-3) \sum_{i\neq k\neq l; i\in \mathcal{H}_1}(\sigma_{ik}^2 \sigma_{il}^2)\right\}^{1/2}\nonumber \\
  &\leq& p_1(p_1-3)^{1/2}\left(\sum_{i\neq j; i,j\in \mathcal{H}_1} \sigma_{ij}^2\right)^{1/2} \left\{ \sum_{k\neq l\neq i; i,k,l\in \mathcal{H}_1} (\sigma_{ik}^2 \sigma_{il}^2) \right\}^{1/2}\nonumber \\
  &\lesssim & p_1^{3/2}\left(\sum_{i\neq j; i,j\in \mathcal{H}_1} \sigma_{ij}^2\right)^{1/2} \left( p_1\sum_{i\neq k; i,k\in \mathcal{H}_1} \sigma_{ik}^4 + p\sum_{i\neq l; i,l\in \mathcal{H}_1} \sigma_{il}^4 \right)^{1/2}\nonumber\\
  &=& p_1^2 \left(\sum_{i\neq j; i,j\in \mathcal{H}_1} \sigma_{ij}^2\right)^{1/2} \left(2\sum_{i\neq j; i,j\in \mathcal{H}_1}\sigma_{ij}^4\right)^{1/2}, \label{eq-lem7-14-added-1}
  \end{eqnarray}
  where the first ``$\leq$" is because of the Cauchy–Schwarz inequality, the second ``$\leq$" is derived from the Jensen's inequality by noting that $\sqrt{x}$ is a concave function, the ``$\lesssim$" is based on $\sigma_{ik}^2\sigma_{il}^2 \lesssim \sigma_{ik}^4 + \sigma_{il}^4$. The far right hand side of \eqref{eq-lem7-14-added-1}
  is in the order of $o\left(p^4\mbox{Var}(m(\bar V, \bar R)\right)$ based on \eqref{eq-lem7-1} and the assumptions \eqref{eq-lem7-assum-1-added-2} and \eqref{eq-lem7-assum-3}; this implies  
  \begin{eqnarray}
  \mathcal{I}_3 = O\left( \sum_{i\in \mathcal{H}_1}\left( \sum_{j\neq i, j\in\mathcal{H}_1} |\sigma_{ij}|\right)^3 \right) = o\left(p^4\mbox{Var}(m(\bar V, \bar R)\right). \label{eq-lem7-14}
  \end{eqnarray}
 Similarly to the development of \eqref{eq-lem7-14-added-1}, we can obtain
  \begin{eqnarray}
  \mathcal{I}_4 = O\left( \sum_{i\neq j} |\sigma_{ij}| \sum_{k\neq i, k\in \mathcal{H}_1} |\sigma_{ik}|\sum_{l\neq j, l\in \mathcal{H}_1}|\sigma_{jl}| \right) = o\left(p^4\mbox{Var}(m(\bar V, \bar R)\right).  \label{eq-lem7-15}
  \end{eqnarray}
  Last, 
  \begin{eqnarray}
  \mathcal{I}_5 = O\left( p^2 \sum_{i\neq j}\sigma_{ij}^4\right)  =  o\left(p^4\mbox{Var}(m(\bar V, \bar R)\right) \label{eq-lem7-16}
  \end{eqnarray}
  is implied by \eqref{eq-lem7-1} and the assumption \eqref{eq-lem7-assum-3}. 
  
  Combining \eqref{eq-lem7-4}, \eqref{eq-lem7-14}, \eqref{eq-lem7-15}, and \eqref{eq-lem7-16},  we conclude that the collection of terms in \eqref{eq-lem7-2} where $\{i,j,k,l\}$ are all different is in the order of $o\left(p^4\mbox{Var}(m(\bar V, \bar R)\right)$. 
  
    \item[(C2)] If one and only one pair in $\{i, j, k, l\}$ are equal to each other, as an example, we consider $i\neq j \neq k=l$, the collection of terms in (\ref{eq-lem7-2}) is given by
    \begin{equation}
        \begin{split}
            & \sum_{i\neq j\neq k}\Big\{E(t_it_j t_k) - E(t_it_k)E(t_j) - E(t_jt_k)E(t_i) - 2E(t_i t_j t_k)E(t_k)
        + E(t_i)E(t_j)E(t_k) \\& + 2E(t_i t_k)E(t_j)E(t_k) + 2E(t_j t_k)E(t_i)E(t_k) + E(t_i t_j)E^2(t_k) -3E^2(t_k)E(t_i)E(t_j)\Big\}\\
        = & \sum_{i\neq j \neq k} \Big[\big\{E(t_i t_j t_k) - E(t_i t_k)E(t_j) - E(t_j t_k)E(t_i) - E(t_i t_j)E(t_k) \\ & \hspace{0.6in} + 2E(t_i)E(t_j)E(t_k)\big\}\{1 - 2E(t_k)\}\Big] \\ &+ \sum_{i\neq j \neq k}\big\{E(t_k) - E^2(t_k)\big\}\big\{E(t_i t_j) - E(t_i)E(t_j)\big\}\\
        =& O\left(\sum_{i=1}^p \left(\sum_{j\neq i, j\in \mathcal{H}_1} |\sigma_{ij}|\right)^2 \right) + O\left(\sum_{i\in \mathcal{H}_1}\left( \sum_{j\neq i} |\sigma_{ij}|\right) \left( \sum_{k\neq i, k\in \mathcal{H}_1}|\sigma_{ik}|\right) \right) \\ &+  O\left(\sum_{i\neq j\neq k} |\sigma_{ij}\sigma_{ik} \sigma_{kl}| \right) + O\left (p \sum_{i\neq j} \sigma_{ij}^4 \right) + O\left(p \sum_{i\neq j}\sigma_{ij}^2\right) + O\left(p \sum_{i\neq j; i,j \in \mathcal{H}_1} |\sigma_{ij}|\right)\\
        =& \mathcal{I}_6 + \mathcal{I}_7 + \mathcal{I}_8 + \mathcal{I}_9 + \mathcal{I}_{10} + \mathcal{I}_{11},
        \end{split} \label{eq-lem7-17}
    \end{equation}
    where to achieve the second ``$=$", we have applied Lemmas \ref{lem-4} and \ref{lem-5}. Next, we consider these ``$\mathcal{I}$" terms one by one. 
    For $\mathcal{I}_6$, since $|\sigma_{ij}|<1$, therefore by noting \eqref{eq-lem7-1} and the assumption \eqref{eq-lem7-assum-0}, we have
    \begin{eqnarray}
    \mathcal{I}_6 = O\left(\sum_{i=1}^p \left(\sum_{j\neq i, j\in \mathcal{H}_1} |\sigma_{ij}|\right)^2 \right)\lesssim p_1 \sum_{j\neq i, j\in \mathcal{H}_1}|\sigma_{ij}| = p_1O(p^{2-\delta})= o\left(p^4\mbox{Var}(m(\bar V, \bar R)\right);
    \end{eqnarray}
    and similarly
    \begin{eqnarray}
    \mathcal{I}_7 = O\left(\sum_{i\in \mathcal{H}_1}\left( \sum_{j\neq i} |\sigma_{ij}|\right) \left( \sum_{k\neq i, k\in \mathcal{H}_1}|\sigma_{ik}|\right) \right) \lesssim p \sum_{i\neq k; i,k \in \mathcal{H}_1} |\sigma_{ik}| = o\left(p^4\mbox{Var}(m(\bar V, \bar R)\right). 
    \end{eqnarray}
    For $\mathcal{I}_8$, since $|\sigma_{ij}|<1$, we have
    \begin{eqnarray}
    \mathcal{I}_8 = O\left(\sum_{i\neq j\neq k} |\sigma_{ij}\sigma_{ik} \sigma_{kl}| \right) \lesssim p \sum_{i\neq j} |\sigma_{ij}| = o\left(p^4\mbox{Var}(m(\bar V, \bar R)\right).
    \end{eqnarray}
    For $\mathcal{I}_9$ and $\mathcal{I}_{10}$, clearly
    \begin{eqnarray}
    \mathcal{I}_9 \lesssim \mathcal{I}_{10} = o\left(p^4\mbox{Var}(m(\bar V, \bar R))\right);
    \end{eqnarray}
    and 
    \begin{eqnarray}
    \mathcal{I}_{11} = O\left(p \sum_{i\neq j; i,j \in \mathcal{H}_1} |\sigma_{ij}|\right) = o\left(p^4\mbox{Var}(m(\bar V, \bar R))\right) \label{eq-lem7-18}
    \end{eqnarray}
    is obtained based on the assumption \eqref{eq-lem7-assum-0}. 
    
    Combining \eqref{eq-lem7-17}--\eqref{eq-lem7-18}, we conclude that the collection of terms in \eqref{eq-lem7-2} where one and only one pair in  $\{i,j,k,l\}$ are equal to each other is in the order of $o\left(p^4\mbox{Var}(m(\bar V, \bar R)\right)$.
    
    \item[(C3)] The collection of terms in \eqref{eq-lem7-2} where two pairs in 
    $\{i, j, k, l\}$ are equal to each other carries only $O(p^2)$ terms; then based on \eqref{eq-lem7-1}, we conclude that this collection of terms is in the order of $o\left(p^4\mbox{Var}(m(\bar V, \bar R)\right)$. 
    
    \item[(C4)] The collection of terms in \eqref{eq-lem7-2} where at least three of $\{i, j, k, l\}$ are mutually equal to each other also carries $O(p^2)$ terms, therefore this collection of terms is  in the order of $o\left(p^4\mbox{Var}(m(\bar V, \bar R)\right)$.  

\end{itemize}

In summary, the discussion of (C1)--(C4) implies 
\begin{eqnarray*}
E[\{R-E(R)\}^4] = o\left(p^4\mbox{Var}(m(\bar V, \bar R))\right),
\end{eqnarray*}
which complete the proof for \eqref{eq-lem7-0-1}. 

The proof for \eqref{eq-lem7-0-2} is similar but simpler. In particular, similar developments in Lemmas \ref{lem-6} and \ref{lem-7} can be applied to derive the rates of convergence for the terms in $E(V-E(V))^4$.  
But we observe that
since $V$ is the summation of $t_i$ over $i\in \mathcal{H}_0$, the corresponding terms involving $\mathcal{H}_1$ in both Lemmas will not appear, which  simplifies the developments. Here we skip the details for this development to avoid lengthy presentations.  
\end{proof}

\begin{lemma} \label{lem-1}
Suppose $(Z_{1},\ldots,Z_{p})^{T} \sim N((\mu_{1},\ldots,\mu_{p})^{T},\bSigma)$ with unit variances, i.e., $\sigma_{jj}=1\text{ for } j=1,\ldots,p$. For any pair of indices $(i, j)$ with $i \ne j$,
\begin{align*}
\frac{\partial\Cov(t_i, t_j)}{\partial\sigma_{ij}} =  \frac{1}{2\pi\sqrt{1-\sigma_{ij}^2}} & \left[ \exp \left\{-\frac{\mu_{i-}^2 + \mu_{j-}^2 - 2\sigma_{ij}\mu_{i-}\mu_{j-}}{2(1-\sigma_{ij}^2)} \right\} + \exp \left\{-\frac{\mu_{i+}^2 + \mu_{j+}^2 - 2\sigma_{ij}\mu_{i+}\mu_{j+}}{2(1-\sigma_{ij}^2)} \right\} \right. \\
 & \left. - \exp \left\{-\frac{\mu_{i-}^2 + \mu_{j+}^2 - 2\sigma_{ij}\mu_{i-}\mu_{j+}} {2(1-\sigma_{ij}^2)} \right\} - \exp \left\{-\frac{\mu_{i+}^2 + \mu_{j-}^2 - 2\sigma_{ij} \mu_{i+}\mu_{j-}} {2(1-\sigma_{ij}^2)} \right\} \right],
\end{align*}
where $\mu_{i-} = \mu_i + z_{t/2}$ and $\mu_{i+} = \mu_i - z_{t/2}$ for $i = 1,\ldots,p$.
\end{lemma}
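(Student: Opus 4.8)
The plan is to differentiate the two-term expression for $\Cov(t_i,t_j)$ in \eqref{covariance.t} directly with respect to $\sigma_{ij}$. The second term is the product of the marginal rejection probabilities $[\Phi(z_{t/2}+\mu_i)+\Phi(z_{t/2}-\mu_i)][\Phi(z_{t/2}+\mu_j)+\Phi(z_{t/2}-\mu_j)]$; since each $Z_i$ is marginally $N(\mu_i,1)$ regardless of $\sigma_{ij}$, this term does not depend on $\sigma_{ij}$ and its derivative vanishes. Everything therefore reduces to differentiating the joint tail probability $\int_{|z_i|>|z_{t/2}|}\int_{|z_j|>|z_{t/2}|}\phi(z_i,z_j;\mu_i,\mu_j,1,1,\sigma_{ij})\,dz_j\,dz_i$.

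First I would justify moving the derivative inside the integral (dominated convergence, using the smoothness and Gaussian tail decay of the bivariate density for $|\sigma_{ij}|<1$), and then invoke the classical Price/Plackett identity for the bivariate normal density with unit variances,
\begin{equation*}
\frac{\partial}{\partial\sigma_{ij}}\,\phi(z_i,z_j;\mu_i,\mu_j,1,1,\sigma_{ij})=\frac{\partial^2}{\partial z_i\,\partial z_j}\,\phi(z_i,z_j;\mu_i,\mu_j,1,1,\sigma_{ij}).
\end{equation*}
This identity is verified by a direct (if tedious) differentiation of the Gaussian exponent, and the mean shifts $\mu_i,\mu_j$ enter only as translations, leaving it intact. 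Writing $c=|z_{t/2}|$, the derivative of the joint probability then becomes the integral of $\partial^2_{z_iz_j}\phi$ over the product region $\{|z_i|>c\}\times\{|z_j|>c\}$.

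Next I would evaluate this by the fundamental theorem of calculus applied in each variable. Splitting the region into its four rectangular pieces $(c,\infty)\times(c,\infty)$, $(c,\infty)\times(-\infty,-c)$, $(-\infty,-c)\times(c,\infty)$, and $(-\infty,-c)\times(-\infty,-c)$, each double integral of the cross-partial telescopes to the four corner values of $\phi$, and every boundary term at $\pm\infty$ vanishes because the density decays. Summing the four pieces leaves exactly
\begin{equation*}
\frac{\partial\,\Cov(t_i,t_j)}{\partial\sigma_{ij}}=\phi(c,c;\cdot)+\phi(-c,-c;\cdot)-\phi(c,-c;\cdot)-\phi(-c,c;\cdot),
\end{equation*}
the four corner evaluations of the bivariate density at $(\pm c,\pm c)$.

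Finally, it remains to rewrite these four corner values in the stated form. Since $z_{t/2}<0$ we have $c=-z_{t/2}$, so $c-\mu_i=-(\mu_i+z_{t/2})=-\mu_{i-}$ and $-c-\mu_i=z_{t/2}-\mu_i=-(\mu_i-z_{t/2})=-\mu_{i+}$; the sign on each deviation cancels pairwise inside the Gaussian exponent (both in the squared terms and in the cross term), turning the four evaluations into the four exponentials in $\mu_{i\pm},\mu_{j\pm}$ written in the lemma, all sharing the prefactor $1/(2\pi\sqrt{1-\sigma_{ij}^2})$. The one point requiring genuine care — and the main obstacle — is the sign bookkeeping forced by $c=|z_{t/2}|=-z_{t/2}$: one must confirm that each corner maps to the correct $(\mu_{i\pm},\mu_{j\pm})$ pairing and that the two ``off-diagonal'' corners $(c,-c)$ and $(-c,c)$ retain the minus signs. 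The entire analytic content of the differentiation is packaged into Price's identity, after which the argument is purely arithmetic.
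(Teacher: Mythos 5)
Your proposal is correct and reaches exactly the stated formula, but it takes a genuinely different route from the paper. The paper never invokes Price's (Plackett's) identity: it writes the joint tail probability $P(|Z_i|>|z_{t/2}|,|Z_j|>|z_{t/2}|)$ as a one-dimensional integral by conditioning $Z_j$ on $Z_i$ (so the inner integral becomes a difference of $\Phi$ terms), differentiates that representation in $\sigma_{ij}$ under the integral sign, and then explicitly evaluates the resulting Gaussian integrals of the form $\int \phi(z_i)\,\phi\bigl((\mu_{j\pm}+\sigma_{ij}z_i)/\sqrt{1-\sigma_{ij}^2}\bigr)(\mu_{j\pm}\sigma_{ij}+z_i)\,dz_i$ over the two-sided tail region, which collapse to the four corner densities. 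Your argument replaces all of that explicit integration with the identity $\partial_\rho\phi=\partial^2_{z_iz_j}\phi$ plus the fundamental theorem of calculus on the four rectangles, which is cleaner and makes transparent why the answer is exactly the alternating sum of the density at the four corners $(\pm c,\pm c)$; the price is that you must separately justify the identity for the mean-shifted density and the vanishing of the boundary terms, both of which you address. Your sign bookkeeping with $c=|z_{t/2}|=-z_{t/2}$ correctly maps $(c,c)\mapsto(\mu_{i-},\mu_{j-})$, $(-c,-c)\mapsto(\mu_{i+},\mu_{j+})$, and the off-diagonal corners to the two negative terms, so the final expression agrees with the lemma; the paper's more computational route yields the same four exponentials after completing the square. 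Either proof is acceptable; yours generalizes more readily to other rejection regions that are products of unions of intervals.
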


\begin{proof}
Let $R = (-\infty, z_{t/2}) \cup (-z_{t/2}, \infty)$ and $R-\mu_i = (-\infty, z_{t/2}-\mu_i) \cup (-z_{t/2}-\mu_i, \infty)$, then
\begin{align*}
& P(|Z_i| > |z_{t/2}|, |Z_j| > |z_{t/2}|) \\ 
= {} & \int_{R} \phi(z_i - \mu_i) \int_{R} \frac{1}{\sqrt{1 - \sigma_{ij}^2}} \phi \left\{\frac{(z_j - \mu_j) - \sigma_{ij}(z_i - \mu_i)}{\sqrt{1 - \sigma_{ij}^2}} \right\} dz_j dz_i \\
= {} & \int_{R} \phi(z_i - \mu_i) \left[ \Phi \left\{\frac{- \mu_{j+} - \sigma_{ij}(z_i - \mu_i)}{\sqrt{1 - \sigma_{ij}^2}} \right\} + \Phi \left\{\frac{\mu_{j-} + \sigma_{ij}(z_i - \mu_i)}{\sqrt{1 - \sigma_{ij}^2}} \right\} \right] dz_i \\
= {} & \int_{R-\mu_i} \phi(z_i) \left[ 1 - \Phi \left(\frac{ \mu_{j+} + \sigma_{ij}z_i}{\sqrt{1 - \sigma_{ij}^2}} \right) + \Phi \left(\frac{\mu_{j-} + \sigma_{ij}z_i}{\sqrt{1 - \sigma_{ij}^2}} \right) \right] dz_i
\end{align*}
where the first equality holds because the conditional distribution of $Z_j$ given $Z_i = z_i$ is $N[\mu_j + \sigma_{ij} (z_i - \mu_i), 1-\sigma_{ij}^2]$. Then,
\begin{align*}
\frac{\partial \Cov(t_i, t_j)}{\partial \sigma_{ij}} = {} & \frac{\partial P(|Z_i| > |z_{t/2}|, |Z_j| > |z_{t/2}|)}{\partial \sigma_{ij}}\\
= {} & - (1-\sigma_{ij}^2)^{-3/2} \int_{R-\mu_i} \phi(z_i) \phi\left(\frac{\mu_{j+} + \sigma_{ij}z_i}{\sqrt{1 - \sigma_{ij}^2}} \right) (\mu_{j+} \sigma_{ij} + z_i) dz_i \notag\\
& + (1-\sigma_{ij}^2)^{-3/2} \int_{R - \mu_i} \phi(z_i) \phi\left(\frac{\mu_{j-} + \sigma_{ij}z_i}{\sqrt{1 - \sigma_{ij}^2}} \right) (\mu_{j-} \sigma_{ij} + z_i) dz_i \\
= {} & - \frac{1}{2 \pi} (1-\sigma_{ij}^2)^{-3/2} \int_{R - \mu_i} \exp\left\{-\frac{z_{i}^{2} + 2 \sigma_{ij} z_{i} \mu_{j+} + \mu_{j+}^{2}}{2(1-\sigma_{ij}^{2})} \right\} (\mu_{j+} \sigma_{ij} + z_i) dz_i \\
& + \frac{1}{2 \pi} (1-\sigma_{ij}^2)^{-3/2} \int_{R - \mu_i} \exp\left\{-\frac{z_{i}^{2} + 2 \sigma_{ij} z_{i} \mu_{j-} + \mu_{j-}^{2}}{2(1-\sigma_{ij}^{2})} \right\} (\mu_{j-} \sigma_{ij} + z_i) dz_i\\
= {} & \frac{1}{2 \pi \sqrt{1-\sigma_{ij}^2}} \left[ \exp\left\{-\frac{\mu_{i+}^2 + \mu_{j+}^2 - 2 \sigma_{ij} \mu_{i+}\mu_{j+}}{2(1-\sigma_{ij}^{2})}\right\} - \exp\left\{-\frac{\mu_{i-}^2 + \mu_{j+}^2 - 2 \sigma_{ij} \mu_{i-}\mu_{j+}}{2(1-\sigma_{ij}^{2})}\right\} \right. \\
 & \left. - \exp\left\{-\frac{\mu_{i+}^2 + \mu_{j-}^2 - 2 \sigma_{ij} \mu_{i+}\mu_{j-}}{2(1-\sigma_{ij}^{2})}\right\} + \exp\left\{-\frac{\mu_{i-}^2 + \mu_{j-}^2 - 2 \sigma_{ij} \mu_{i-}\mu_{j-}}{2(1-\sigma_{ij}^{2})}\right\}\right].
\end{align*}
\end{proof}

\section{Proof of Theorem \ref{Thm-1} and Corollary \ref{Corrollary-asym-var}} \label{sec-proof-thm-1}

Define $\bar{V} = V(t)/p$ and $\bar{R} = R(t)/p$, then $\FDP(t) = \bar{V}/\bar{R}$. Let $H(v,r)= v/r$ be a function of $(v,r)$ and apply Taylor expansion on this function at $(E(\bar V), E(\bar R))$ and plugging in $v = \bar V$, $r = \bar R \vee c$ to the expansion, where $\bar R\vee c = \max\{\bar R, c\}$ and $c$ is a sufficiently small constant with $0<c<0.5E(\bar R)$; we have
\begin{eqnarray*}
\frac{\bar V}{\bar R\vee c} = \frac{E(\bar{V})}{E(\bar{R})} + \frac{\bar{V} - E(\bar{V})}{E(\bar{R})} - \frac{E(\bar{V})}{\{E(\bar{R})\}^2}\Big\{\bar{R}\vee c - E(\bar{R})\Big\} + r^*(\bar{V}, \bar{R}),  \label{eq-thm1-1}
\end{eqnarray*}
where $r^*(\bar V, \bar R)$ is the remainder term in the Taylor expansion. Since $E(\bar R) > c$ and $\bar R\vee c\geq c$, and in an arbitrary order of the partial derivatives of the function $H(v,r)$, only $r$ can appear in the denominator, therefore, we can verify 
\begin{eqnarray}
|r^*(\bar V, \bar R)| \lesssim \left\{\bar V - E(\bar V)\right\}^2 + \left\{\bar R \vee c - E(\bar R)\right\}^2. \label{eq-thm1-2}
\end{eqnarray}
Therefore, we can have
\begin{eqnarray}
\mbox{FDP}(t) = E(\bar V)/E(\bar R) +  m(\bar{V},\bar{R}) + \mathcal{J} +r^*(\bar V, \bar R), \label{eq-thm1-3}
\end{eqnarray}
where
\begin{eqnarray*}
m(\bar{V},\bar{R}) &=&  \frac{\bar{V}}{E(\bar{R})} - \frac{E(\bar{V})}{\{E(\bar{R})\}^2} \bar{R} \label{eq-thm1-4}\\
\mathcal{J} &=& \frac{\bar V}{\bar R} - \frac{\bar V}{\bar R \vee c}.  \label{eq-thm1-5}
\end{eqnarray*}
Define $r(\bar V, \bar R) = r^*(\bar V, \bar R) + \mathcal{J}$. To complete the proof of the theorem, it suffices to show that both $E(\mathcal{J}^2)$ and $E\{r^{*2}(\bar V, \bar R)\}$ can be dominated by $\mbox{Var}\{m(\bar{V},\bar{R})\}$.  We consider $E\{r^{*2}(\bar V, \bar R)\}$ first. Based on \eqref{eq-thm1-1}, we have
\begin{eqnarray*}
|r^*(\bar V, \bar R)| &\lesssim& \left\{\bar V - E(\bar V)\right\}^2 + \left\{\bar R \vee c - \bar R\right\}^2 + \left\{ \bar R -  E(\bar R)\right\}^2\\
&=&\left\{\bar V - E(\bar V)\right\}^2 + \left\{ c - \bar R\right\}^2\mathrm{1}(\bar R < c)  + \left\{ \bar R -  E(\bar R)\right\}^2\\
&\leq & \left\{\bar V - E(\bar V)\right\}^2 + c^2\mathrm{1}(\bar R < c)  + \left\{ \bar R -  E(\bar R)\right\}^2, \label{eq-thm1-6}
\end{eqnarray*}
which leads to 
\begin{eqnarray}
 E\left\{r^{*2}(\bar V, \bar R)\right\}
&\lesssim& E\left\{\bar V - E(\bar V)\right\}^4 + c^4P(\bar R < c)  + E\left\{ \bar R -  E(\bar R)\right\}^4 \nonumber \\
&\leq & E\left\{\bar V - E(\bar V)\right\}^4   + 2  E\left\{ \bar R -  E(\bar R)\right\}^4, \label{eq-thm1-7}
\end{eqnarray}
since based on the definition of $c\in (0, 0.5 E(\bar R))$, we have
\begin{eqnarray*}
P(\bar R<c) \leq P(|\bar R - E(\bar R)|> c) \leq \frac{E\left\{\bar R - E(\bar R)\right\}^4}{c^4}. \label{eq-thm1-8}
\end{eqnarray*}
Combining \eqref{eq-thm1-7} with Lemma \ref{lem-7}, we conclude
\begin{eqnarray}
E\{r^{*2}(\bar V, \bar R)\} = o\left(\mbox{Var}\{m(\bar{V},\bar{R})\}\right). \label{eq-thm1-9}
\end{eqnarray}

Next, we consider $E(\mathcal{J}^2)$; in particular
\begin{eqnarray*}
\mathcal{J} = \frac{\bar V}{\bar R} - \frac{\bar V}{\bar R \vee c} = \frac{\bar V}{\bar R}\cdot \frac{\bar R\vee c - \bar R}{\bar R \vee c} = \frac{\bar V}{\bar R} \cdot \frac{c-\bar R}{c}\cdot \mathrm{1}(\bar R<c).
\end{eqnarray*}
By noting $\bar V/\bar R\in (0,1)$ and $(c-\bar R)/c \in (0,1)$ when $\bar R<c$, we have
\begin{eqnarray}
E(\mathcal{J}^2) \leq E\{\mathrm{1}(\bar R<c)\} \leq \frac{E\left\{\bar R - E(\bar R)\right\}^4}{c^4} = o\left(\mbox{Var}\{m(\bar{V},\bar{R}))\right). \label{eq-thm1-10}
\end{eqnarray}
Combining \eqref{eq-thm1-9}, \eqref{eq-thm1-10} with \eqref{eq-thm1-3}, we complete the proof of Theorem \ref{Thm-1}.

We proceed to show Corollary \ref{Corrollary-asym-var}. From (\ref{eq-thm1-3}), we have that
\begin{eqnarray*}
\mbox{Var} \left\{ \mbox{FDP}(t) \right\} &=& \mbox{Var} \left\{ m(\bar{V},\bar{R}) + \mathcal{J} + r^*(\bar V, \bar R) \right\} \\
&=& \mbox{Var} \left\{ m(\bar{V},\bar{R}) \right\} + \mbox{Var} \left\{ \mathcal{J} \right\} + \mbox{Var} \left\{r^*(\bar V, \bar R) \right\}\\
&& + 2 \mbox{Cov} \left\{ m(\bar{V},\bar{R}), \mathcal{J} \right\} + 2 \mbox{Cov} \left\{ m(\bar{V},\bar{R}), r^*(\bar V, \bar R) \right\}+ 2 \mbox{Cov} \left\{ \mathcal{J}, r^*(\bar V, \bar R) \right\},
\end{eqnarray*}
which together with \eqref{eq-thm1-9}, \eqref{eq-thm1-10}, and Cauchy-Schwarz inequality immediately leads to
\begin{equation*}
\lim_{p \to \infty} \frac{\mbox{Var} \left\{ \mbox{FDP}(t) \right\}}{\mbox{Var} \left\{ m(\bar{V},\bar{R}) \right\}} = 1.
\end{equation*}
With straightforward evaluation, we can check that the denominator above is identical to $V_1(t) + V_2(t)$. We complete the proof of Corollary \ref{Corrollary-asym-var}.

\section{Proof of Theorem \ref{Thm-2}} \label{sec-proof-thm-2}

\subsubsection*{Part (a)}

When $\mu_i = \mu_j = 0$, according to Lemma \ref{lem-1}, 
\[ \frac{\partial \Cov(t_i, t_j)}{\partial \sigma_{ij}} =  \frac{1}{\pi \sqrt{1-\sigma_{ij}^2}} \left\{\exp\left(-\frac{z_{t/2}^{2}}{1+\sigma_{ij}} \right)  - \exp\left(-\frac{z_{t/2}^{2}}{1-\sigma_{ij}} \right) \right\}. \]
Thus, $\partial \Cov(t_i, t_j)/\partial \sigma_{ij} > 0$ if $\sigma_{ij} > 0$ and $\partial \Cov(t_i, t_j)/\partial \sigma_{ij} < 0$ if $\sigma_{ij} < 0$. In addition, $\Cov(t_i, t_j) = 0$ when $\sigma_{ij} = 0$. Then $\Cov(t_i, t_j) > 0$ for $\sigma_{ij} \ne 0$.

\subsubsection*{Part (b)}

When $\mu_i = 0$ and $\mu_j \ne 0$, based on Lemma \ref{lem-1}, $\partial \Cov(t_i, t_j)/\partial \sigma_{ij}$ is an odd function of $\sigma_{ij}$ and an even function of $\mu_j$. As $\Cov(t_i, t_j) = 0$ when $\sigma_{ij} = 0$, it suffices to show that $\partial \Cov(t_i, t_j)/\partial \sigma_{ij} < 0$ when $\sigma_{ij} > 0$ and $\mu_j > 0$.

According to Lemma \ref{lem-1}, up to positive multiplicative factors, 
\begin{align}
\frac{\partial\Cov(t_i, t_j)}{\partial\sigma_{ij}} \propto {} & \exp \left\{-\frac{\mu_{j-}^2 - 2\sigma_{ij}z_{t/2}\mu_{j-}}{2(1-\sigma_{ij}^2)} \right\} + \exp \left\{-\frac{\mu_{j+}^2 + 2\sigma_{ij}z_{t/2}\mu_{j+}}{2(1-\sigma_{ij}^2)} \right\} \notag \\
 & - \exp \left\{-\frac{\mu_{j+}^2 - 2\sigma_{ij}z_{t/2}\mu_{j+}} {2(1-\sigma_{ij}^2)} \right\} - \exp \left\{-\frac{\mu_{j-}^2 + 2\sigma_{ij}z_{t/2}\mu_{j-}} {2(1-\sigma_{ij}^2)} \right\} \notag \\
 \propto {} & \exp \left\{-\frac{(1-\sigma_{ij})z_{t/2}\mu_j-\sigma_{ij}z_{t/2}^2}{1-\sigma_{ij}^2} \right\} + \exp \left\{-\frac{-(1-\sigma_{ij})z_{t/2}\mu_j-\sigma_{ij}z_{t/2}^2}{1-\sigma_{ij}^2} \right\} \notag \\
 & - \exp \left\{-\frac{-(1+\sigma_{ij})z_{t/2}\mu_j+\sigma_{ij}z_{t/2}^2} {1-\sigma_{ij}^2} \right\} - \exp \left\{-\frac{(1+\sigma_{ij})z_{t/2}\mu_j+\sigma_{ij}z_{t/2}^2} {1-\sigma_{ij}^2} \right\} \notag \\
 \le {} & \exp \left\{-\frac{2z_{t/2}\mu_j-\sigma_{ij}z_{t/2}\mu_j}{2(1-\sigma_{ij}^2)} \right\} + \exp \left\{-\frac{-2z_{t/2}\mu_j+3\sigma_{ij}z_{t/2}\mu_j}{2(1-\sigma_{ij}^2)} \right\} \notag \\
 & - \exp \left\{-\frac{-2z_{t/2}\mu_j-3\sigma_{ij}z_{t/2}\mu_j} {2(1-\sigma_{ij}^2)} \right\} - \exp \left\{-\frac{2z_{t/2}\mu_j+\sigma_{ij}z_{t/2}\mu_j} {2(1-\sigma_{ij}^2)} \right\}, \label{eq-thm2-1}
\end{align}
where the last inequality holds as $\mu_j > 2 |z_{t/2}|$. Let 
\[a = \exp[-\sigma_{ij}z_{t/2}\mu_j/\{2(1-\sigma_{ij}^2)\}] \quad\text{and}\quad  b = \exp\{-z_{t/2}\mu_j/(1-\sigma_{ij}^2)\},\]
then the expression in (\ref{eq-thm2-1}) becomes
\begin{align*}
a^{-1}b + a^3 b^{-1} - a^{-3}b^{-1} - ab &= a^{-3} b^{-1} (a^2 b^2 + a^6 - a^4 b^2 - 1) \\
&= a^{-3} b^{-1} (a^2 - 1) (a^4 + a^2 + 1 - a^2 b^2).
\end{align*}
Since $a > 1$, $\partial\Cov(t_i, t_j)/\partial\sigma_{ij} < 0$ is implied by $a^4 + a^2 + 1 - a^2 b^2 < 0$, which is further implied by $3a^2 < b^2$. It is straightforward to show that $t < 2\{1-\Phi^{-1}(\sqrt{\log(3)/2})\}$ and $\mu_j > 2 |z_{t/2}|$ are sufficient conditions for $3a^2 < b^2$.

\subsubsection*{Part (c)}

As $\mathrm{sign}(\sigma_{ij}) = \mathrm{sign}(\mu_i \mu_j)$, we can assume $\mu_i > 0$, $\mu_j > 0$, and $\sigma_{ij} > 0$ in the proof, because for the other possible cases, we can add a negative sign to $Z_i$ and/or $Z_j$, which does not actually change $\Cov(t_i, t_j)$ and thus does not affect the conclusion. Similar to Part (b), It suffices to show that $\partial \Cov(t_i, t_j)/\partial \sigma_{ij} > 0$ when $\mu_i > 0$, $\mu_j > 0$, and $\sigma_{ij} > 0$. Further, we assume $\mu_i < \mu_j$ without loss of generality. 

According to Lemma \ref{lem-1}, up to a positive multiplicative factor, 
\begin{align}
\frac{\partial\Cov(t_i, t_j)}{\partial\sigma_{ij}} \propto {} & \exp\left(\frac{z_{t/2}^2 \sigma_{ij}}{1-\sigma_{ij}^2}\right) \left[ \exp \left\{-\frac{z_{t/2}(\mu_i+\mu_j)(1-\sigma_{ij})}{1-\sigma_{ij}^2} \right\} + \exp \left\{\frac{z_{t/2}(\mu_i+\mu_j)(1-\sigma_{ij})}{1-\sigma_{ij}^2} \right\} \right] \notag \\
 & - \exp\left(-\frac{z_{t/2}^2 \sigma_{ij}}{1-\sigma_{ij}^2}\right) \left[ \exp \left\{-\frac{z_{t/2}(\mu_i-\mu_j)(1+\sigma_{ij})}{1-\sigma_{ij}^2} \right\} + \exp \left\{\frac{z_{t/2}(\mu_i-\mu_j)(1+\sigma_{ij})}{1-\sigma_{ij}^2} \right\} \right] \label{eq-thm2-2}
\end{align}
Introducing the following notation
\[a = \exp\left(\frac{z_{t/2}^2 \sigma_{ij}}{1-\sigma_{ij}^2}\right), \  x = \exp \left\{-\frac{z_{t/2}(\mu_i+\mu_j)(1-\sigma_{ij})}{1-\sigma_{ij}^2} \right\}, \  y = \exp \left\{\frac{z_{t/2}(\mu_i-\mu_j)(1+\sigma_{ij})}{1-\sigma_{ij}^2} \right\},\]
where $a > 1$, $x > 1$, and $y > 1$. Then, the expression in (\ref{eq-thm2-2}) becomes $a(x + x^{-1}) - a^{-1}(y + y^{-1})$.

Next, we show that $a x > a^{-1} y$ implies that $a(x + x^{-1}) > a^{-1}(y + y^{-1})$. When $x > y$, it is easy to see that $a(x + x^{-1}) > a^{-1}(y + y^{-1})$. When $x \le y$ and $a x > a^{-1} y$, $a(x + x^{-1}) > a^{-1} y + a y^{-1} \ge a^{-1}(y + y^{-1})$. 

Therefore, a sufficient condition for $\partial \Cov(t_i, t_j)/\partial \sigma_{ij} > 0$ is $a x > a^{-1} y$, which is equivalent to $\sigma_{ij} < \mu_i/(\mu_j + z_{t/2})$.



















